\theoremstyle{plain}
\newtheorem{theorem}{Theorem}[section]
\newtheorem{proposition}{Proposition}[section]
\newtheorem{corollary}{Corollary}[section]
\newtheorem{lemma}{Lemma}[section]
\theoremstyle{remark}
\newtheorem{definition}{Definition}[section]
\newtheorem{remark}{Remark}[section]
\DeclareMathOperator{\supp}{supp}
\DeclareMathOperator{\lin}{span}
\DeclareMathOperator{\diag}{diag}
\DeclareMathOperator{\lip}{Lip}
\begin{document}

\title{Sup-norm-closable bilinear forms and Lagrangians}
\author{Michael Hinz$^1$}
\address{Fakult\"at f\"ur Mathematik, Universit\"at Bielefeld, Postfach 100131, 33501 Bielefeld, Germany}
\email{mhinz@math.uni-bielefeld.de}
\thanks{$^1$Research supported in part by NSF grant DMS-0505622 and by the Alexander von Humboldt Foundation (Feodor Lynen Research Fellowship Program)}

\date{\today}

\begin{abstract}
We consider symmetric non-negative definite bilinear forms on algebras of bounded  real valued functions and investigate closability with respect to the supremum norm. In particular, any Dirichlet form gives rise to a sup-norm closable bilinear form. Under mild conditions a sup-norm closable bilinear form admits finitely additive energy measures. If, in addition, there exists a (countably additive) energy dominant measure, then a sup-norm closable bilinear form can be turned into a Dirichlet form admitting a carr\'e du champ. Moreover, we can always transfer the  bilinear form  to an isometrically isomorphic algebra of bounded functions on the Gelfand spectrum, where these measures exist. Our results complement a former closability study of Mokobodzki for the locally compact and separable case. 
 
\tableofcontents
\end{abstract}
\maketitle

\section{Introduction}

The present article is concerned with nonnegative definite symmetric bilinear forms $\mathcal{E}$ on a given space $\mathcal{D}$ of bounded real valued functions on a nonempty set $X$. A simple example is given by the space $\mathcal{D}=\lip(\Omega)$ of Lipschitz functions on a smooth bounded domain $\Omega\in\mathbb{R}^n$ together with the Dirichlet integral
\begin{equation}\label{E:classical}
\mathcal{E}(f,g)=\int_\Omega \nabla f\nabla g\:dx,\ \ f,g\in\lip(\mathbb{R}^n).
\end{equation}
In many cases such bilinear forms $(\mathcal{E},\mathcal{D})$ give rise to \emph{(symmetric) Dirichlet forms}. 
In modern terminology, cf. \cite{BH91, ChF12, FOT94, MR92, S74}, we use this name for  closed nonnegative definite symmetric bilinear forms that are densely defined on Hilbert spaces of $L_2$-equivalence classes of functions on $X$ with respect to a suitable measure (called the \emph{volume measure} or \emph{reference measure}) and have a certain contraction property. 
The theory of Dirichlet forms embeds naturally into the theory of general closed forms on Hilbert spaces, cf. \cite{RS}, which is omnipresent in the mainstream literature in mathematical physics. Because of their physical significance such bilinear forms $\mathcal{E}$ are also referred to as \emph{energy forms}. One reason this name is their relation to the spectral theory of self-adjoint operators on $L_2$-spaces of equivalence classes of functions. Another justification is the energy integral for the potential energy of a vibrating string (or, more generally, the potential energy of the gradient field), which may be formulated in terms of continuous functions and does not need a Hilbert space context.

One motivation for our study comes from the recent need to develop harmonic analysis on spaces that are not locally compact, or have no pre-determined topology (see, for instance,  \cite{BaGor,BaGorMel}). This is related to recent results towards a vector analysis 
on spaces that have no smoothness structure, see \cite{BeMaRe,PeBe,HTa,HTb,HRT}, where it may not always be clear which topology or volume measure is the most appropriate. Another, long term motivation for our study consists in the aim to contribute mathematical tools that can deal with the well established appearance of fractal structures in the physical theory of Quantum Gravity, see, for instance \cite{Englert,Reuter}. Some mathematical models related to Quantum Gravity were recently proposed in \cite{GRV13, GRV14}. A general mathematical theory (yet to be developed) should include a notion of energy that is independent of the underlying space-time metric. In particular, the usual approach to begin a study with a Hilbert space $L^2(X,\mu)$ implicitly fixes a notion of time, which we would like to avoid. In a sense, we use a space $\mathcal{D}$ of bounded finite energy functions as our main object of study, without choosing a particular $L^2$-space. A consequent development of this idea might also allow new insights into the theory of Dirichlet forms on noncommutative spaces, see e.g. \cite{C08, DaLi92, DaRo89}, and related aspects of Lipschitz seminorms, cf. \cite{Rieffel}. In this sense we aim to collect some results that together form a proxy to the rich and well developed theory of Dirichlet forms, \cite{BH91, ChF12, FOT94, MR92}.

In their seminal papers \cite{BD59a, BD59b} Beurling and Deny introduced what we today refer to as \emph{transient extended Dirichlet space}, cf. \cite[p. 41]{FOT94}. Their definition involves a fixed reference measure, and if two functions belong to the same energy equivalence class, they also belong to the same equivalence class in terms of local integrability with respect to this measure. Because of the strong potential theoretic blend of these papers this idea is natural, and several subsequent articles follow this point of view, e.g. \cite{D65, Doob62, M72, M73}. A more abstract framework containing this construction is the functional completion of Aronszajn and Smith \cite{AS56}. Fukushima \cite{F69, F71, F71b} (see also \cite[Appendix A.4]{FOT94}) introduced a more general concept of (not necessarily transient) \emph{Dirichlet spaces}, investigated their representation theory and in the regular case, constructed associated Hunt processes. To adopt this point of view again a reference measure is fixed a priori and $L_2$-spaces of classes of functions are considered. This perspective is related to the theory of commutative von Neumann algebras. It is carefully elaborated in \cite{C06} and generalizes to the noncommutative context, \cite{AHK, C08, DaLi92, DaRo89, S2, S3}. On the other hand, Allain and Andersson,\cite{Allain, And75}, studied the representation theory of bilinear forms on the level of continuous functions. Their results do not involve the choice of a reference measure and rather correspond to the representation theory of general commutative Banach-$^\ast$ (or $C^\ast$-) algebras. Another direction to be mentioned is the theory of resistance forms as developed by Kigami, see for example \cite{Ki01, Ki03, Ki12}. Roughly speaking, they are far reaching generalizations of the classical energy form (\ref{E:classical}) for $n=1$. For resistance forms a comprehensive theory (including Green's operators and Laplacians) can already be developed without fixing a reference measure, cf. \cite{Ki03}.

In the present article we attempt to follow the perspective of \cite{Allain} and \cite{And75}. We start from rather algebraic ingredients, namely an algebra $\mathcal{D}$ of bounded real valued functions (not equivalence classes) on a nonempty set $X$ and a nonnegative definite symmetric bilinear form $\mathcal{E}$ on $\mathcal{D}$ which has a certain contraction property (normal contractions operate). To turn a given bilinear form $(\mathcal{E},\mathcal{D})$ into a Dirichlet form some
$L_2$-density and closability with respect to a suitable measure are needed. In \cite{Mo95} Mokobodzki studied bilinear forms on a dense subspace of the space of continuous functions on a locally compact and second countable base space. \cite[Proposition 1]{Mo95} tells that lower semicontinuity with respect to the sup-norm (\emph{sup-norm-lower semicontinuity}) is equivalent to the existence of a bounded (Borel) measure with respect to which the form is $L_2$-closable. At this point not much can be said about this measure. We will follow another standard idea to single out some suitable measures and consider the linear functionals $L_f$, $f\in\mathcal{D}$, given by 
\[L_f(h):=2\mathcal{E}(fh,f)-\mathcal{E}(f^2,h), \ \ h\in\mathcal{D}.\]
In the theory of Dirichlet forms corresponding functionals are positive and display a contraction property, cf. \cite[Proposition I.4.1.1]{BH91}. For the bilinear form in the above example we obtain
\[L_f(h)=\int_\Omega h |\nabla f|^2dx.\] 
We provide sufficient conditions for their positivity in more general cases.  If positive, then the functionals $L_f$ can be represented by finitely additive energy measures $\Gamma(f)$ that generalize $|\nabla f|^2dx$, i.e. we have $L_f(h)=\int_X h\:d\Gamma(f)$ for a suitable notion of integral, cf. \cite{Hew51}. For regular Dirichlet forms the energy measures $\Gamma(f)$ were already studied by Silverstein \cite{S74}, LeJan \cite{LJ78}, Fukushima \cite{FOT94} and others, and in this case they are known to be countably additive (in fact, Radon measures). If there exists a finite (and countably additive) measure $m$ on $X$ is such that all energy measures $\Gamma(f)$ are absolutely continuous with respect to $m$ (in this case  $m$ is called \emph{energy dominant}), then it may be possible to verify the $L_2$-closability of $(\mathcal{E},\mathcal{D})$ with respect to $m$. We discuss a closability condition in terms of the supremum norm and call $(\mathcal{E},\mathcal{D})$ \emph{sup-norm-closable} if for any 
sequence of functions $(f_n)_n\subset\mathcal{D}$ that is $\mathcal{E}$-Cauchy and tends to zero uniformly on $X$, we have $\lim_n\mathcal{E}(f_n)=0$. From the point of view of Banach-$^\ast$ (or $C^\ast$-) algebras the measure-free notion of sup-norm-closability seems natural. It is also remarkable that sup-norm-closability and a contraction property are the essential ingredients to prove the positivity of all the functionals $L_f$, cf. Theorem \ref{T:positive}. Our main result, Theorem \ref{T:clos}, roughly speaking says that if $(\mathcal{E},\mathcal{D})$ is sup-norm-closable and $m$ is a finite energy dominant measure, then $(\mathcal{E},\mathcal{D})$ is closable in $L_2(X,m)$, and its closure is a Dirichlet form admitting a carr\'e du champ in the sense of Bouleau and Hirsch \cite{BH91}, i.e. all energy measures $\Gamma(f)$ have densities with respect to $m$. For the locally compact and separable situation a corresponding statement follows already from Proposition 1 and Th\'eor\`eme 9 in \cite{Mo95} (although a 
complete proof is not given there). In general sup-norm-lower semicontinuity trivially implies sup-norm-closability, and moreover, if there exists a finite energy dominant measure then also the converse implication is true, cf. Section \ref{S:lsc}. In this sense Theorem \ref{T:clos} may be seen as a generalization of Mokobodzki's result and seems to be in good agreement with his Remarques on \cite[p. 412]{Mo95}. Our proof of Theorem \ref{T:clos} is purely analytic and relies on a rather elementary uniform integrability argument together with the Beurling-Deny representation for related bilinear forms on Euclidean spaces (\emph{coordinate bilinear forms}). A preliminary version of this argument was already sketched in a working paper, \cite{HTarXiv}. If a sup-norm-closable form $(\mathcal{E},\mathcal{D})$ has a certain additional separability property and the energy measures are countably additive, then we can \emph{construct} a finite energy dominant measure $m$ by summing up sufficiently many energy measures. For regular Dirichlet forms this is a standard trick, see for instance \cite{Hino08, Hino10} and also \cite{Ku89, T08}. 
In this sense a sup-closable form leads to a Dirichlet form. Conversely, if $(\mathcal{E},\mathcal{F})$ is a Dirichlet form on $L_2(X,\mu)$, where $\mu$ is a $\sigma$-finite measure, and if $\mathcal{B}$ denotes all bounded measurable functions $f$ whose $\mu$-equivalence classes are in $\mathcal{F}$, then we would expect $(\mathcal{E},\mathcal{B})$ to be a sup-norm-closable bilinear form. For finite $\mu$ this is immediate. To capture the general case we study the functionals $L_f$ from a somewhat more abstract point of view, and following \cite{Mosco} (see also \cite{Cap, FL}) we refer to these objects as \emph{Lagrangians}. Again we are led to a sup-norm-closable bilinear form, what shows that sup-norm-closability is a necessary condition for $L_2$-closability, even for unbounded reference measures. Further, if there is a finite energy dominant measure $m$ for $(\mathcal{E},\mathcal{B})$, then we can change measure from $\mu$ to $m$, and the closure of $(\mathcal{E},\mathcal{B})$ in $L_2(X,m)$ is a 
Dirichlet form that admits energy densities. In terms of the associated semigroup (or Markov process) this amounts to a \emph{time change}. For regular Dirichlet forms results on time change follow by probabilistic arguments that are well known, see for instance \cite[Section 6.2]{FOT94}, further references are given in Section \ref{S:clos}. For finite reference measures one may alternatively invoke \cite{Mo95} to obtain this result. Our method provides a change of measure result to a specific kind of measure (energy dominant), but in a more general (not necessarily topological) setup. 

It is well known that for Dirichlet forms the existence of energy densities (i.e. of a \emph{carr\'e du champ}) entails a number of desirable properties, see \cite[Chapter I]{BH91}, including the validity of interesting functional inequalities, \cite{BaEm85, Led}, and in the local case Gaussian short time asymptotics, see \cite{HinoRam}. The results in \cite{Ka12, Ki08} suggest further consequences. We have encountered further advantages of energy dominant reference measures when dealing with (generalized) $L_2$-bundles of $1$-forms in \cite[Section 2]{HRT}, see also \cite{AR89, AR90, Eb99} in this context. In many classical cases the existence of energy densities is immediate, and in a huge part of the existing literature it is just assumed,  \cite{BaEm85, Eb99, Led, Sturm94, Sturm95}. However, for some Dirichlet forms on fractals is atypical, \cite{BBST, Hino03, Hino05}. In these cases changing to an energy dominant reference measure is a way to this establish property in order to obtain further 
results, \cite{Hino08, Hino10, Ka12, Ki08, Ku89, Ku93, T08}. Of course this means changing the volume measure (and therefore the model), but at least from the point of view of stochastic processes this transformation is not unusual.

It may be that there is no energy dominant measure for a bilinear form $(\mathcal{E},\mathcal{D})$, see for instance \cite[Section 6]{HKT}. In this case we can still switch to a compactification of $X$ and transfer the form to a new form $(\hat{\mathcal{E}},\hat{\mathcal{D}})$ for which energy dominant measures exist. In our setup it seems natural to consider the smallest $C^\ast$-algebra containing $\mathcal{D}$ and to switch to its Gelfand spectrum $\Delta$. In \cite{HKT} we have already started to discuss how to transfer a Dirichlet form $(\mathcal{E},\mathcal{F})$ into another Dirichlet form $(\hat{\mathcal{E}},\hat{\mathcal{F}})$ on the Gelfand spectrum $\Delta$ of the closure of $\mathcal{B}$ (in the above notation). There we also had to transfer the given reference measure on $X$ to a measure on the Gelfand spectrum. Here we do not assume the existence of this reference measure on $X$ but construct a reference measure on the Gelfand spectrum. (We would like to remark that unfortunately \cite{HKT} 
contains a mistake, the method to transfer the reference measure to the spectrum may fail in the case of an infinite measure. This mistake can easily be fixed by using the unitisation of the underlying algebra, cf. \cite{Kaniuth}.

We proceed as follows. In Section \ref{S:main} we introduce notation and setup, provide some essential definitions and state our main results. In Section \ref{S:PLF} we show that sup-norm-closability of $(\mathcal{E},\mathcal{D})$ together with a typical contraction property is already sufficient for the linear functionals $L_f$ to be positive, Theorem \ref{T:positive}. Under these conditions they are also bounded and contractive in a similar way. In Section \ref{S:zerokill} we introduce the killing functional of a given bilinear form and discuss the case of zero killing. If all $L_f$ are positive, then a bilinear form with zero killing can essentially be recovered by the operator norms of these functionals. Section \ref{S:Lagrange} contains a more detailed study of Lagrangians, including associated energies, the case of zero killing and sup-norm-closability. Section \ref{S:Dirichlet} is devoted to the proof of our claim that the concept of sup-norm-closable Lagrangians is sufficiently general to capture arbitrary Dirichlet forms on measure spaces, Theorem \ref{T:Dirichletform}. Section \ref{S:energymeas} provides details of the representation of the functionals $L_f$ by finitely additive measures. In particular, we briefly review Hewitts results on finitely additive measures and integration, \cite{Hew51}.
In Section \ref{S:clos} we finally prove Theorems \ref{T:clos}, which tells that under mild conditions any suitably contractive sup-closable bilinear form together with an energy dominant measure produces a Dirichlet form that admits a carr\'e du champ in the sense of \cite{BH91}. As a byproduct we obtain a change of measure result for general (not necessarily regular) Dirichlet forms, Corollary \ref{C:changeofmeasure}. Section \ref{S:Gelfand} is related to the exposition in \cite{HKT}. A sup-norm-closable form can always be transferred into a sup-norm-closable form on the Gelfand spectrum of the closure of $\mathcal{D}$, and for this transferred form an energy dominant measure can be constructed. Finally, we briefly discuss the idea of sup-norm-lower semicontinuity in Section \ref{S:lsc}. Some comments on the proof of another version of Theorem \ref{T:positive} are provided in an appendix.

\section*{Acknowledgements} 

First of all the author thanks Alexander Teplyaev who 
suggested the idea of sup-norm closability, encouraged a detailed study and supported it during many inspiring conversations. Some preliminary results in the direction of this paper were created during a research stay at the University of Connecticut, supported by NSF grant DMS-0505622 and by the Alexander von Humboldt Foundation (Feodor Lynen Research Fellowship Program).

The author also thanks Raffaela Capitanelli, Alexander Grigoryan and Wolfhard Hansen for valuable comments.

\section{Definitions and main results}\label{S:main}

In this section we introduce our setup, provide the necessary definitions and state our main results.

Let $\mathcal{D}$ be a vector space of bounded real valued functions endowed with the supremum norm $\left\|f\right\|_{\sup}=\sup_{x\in X}|f(x)|$. We assume throughout that together with the natural (pointwise) order, $\mathcal{D}$ is a vector lattice. The cone of nonnegative elements of $\mathcal{D}$ is denoted by $\mathcal{D}^+$. We say that $\mathcal{D}$ has the \emph{Stone property} if $f\in\mathcal{D}$ implies $f\wedge 1\in\mathcal{D}$. Consider the functions $T_\alpha:\mathbb{R}\to\mathbb{R}$ by $T_\alpha(x)=x^+\wedge \alpha$ for $\alpha\geq 0$ and $T_\alpha(x)=(-x^-)\vee \alpha$ for $\alpha\leq 0$. To the function $T_1$ we refer as the \emph{unit contraction}. Since $\mathcal{D}$ consists of bounded functions, the Stone property is equivalent to the fact that $\mathcal{D}$ is \emph{stable under the unit contraction}, i.e. $f\in\mathcal{D}$ implies $T_1(f)\in\mathcal{D}^+$. The same is true for any $T_\alpha$ in place of $T_1$. A function $F:\mathbb{R}\to\mathbb{R}$ with $F(0)=0$ and $|F(x)-F(y)|\leq |x-y|$, $x,y\in\mathbb{R}$, is called a \emph{normal contraction}. We say that \emph{$\mathcal{D}$ is stable under normal contractions} if $f\in\mathcal{D}$ implies  $F(f)\in\mathcal{D}$. Stability under normal contractions implies that $\mathcal{D}$ is an algebra, note that $fg=\frac14((f+g)^2-(f-g)^2)$. It also implies both the lattice and the Stone property (i.e. in this case they do not need to be required separately).

Now let $\mathcal{E}:\mathcal{D}\times\mathcal{D}\to\mathbb{R}$ be a nonnegative definite symmetric bilinear form. For simplicity we silently assume these attributes and just write \emph{bilinear form}. We say that \emph{the unit contraction operates on $(\mathcal{E},\mathcal{D})$} if $\mathcal{E}(T_1(f))\leq \mathcal{E}(f)$, $f\in\mathcal{D}$. If the unit contraction operates on $(\mathcal{E},\mathcal{D})$ then this estimate remains true for any $T_\alpha$ in place of $T_1$. We say that \emph{normal contractions operate on $(\mathcal{E},\mathcal{D})$} if $\mathcal{D}$ is stable under normal contractions and
\begin{equation}\label{E:normalcontract}
\mathcal{E}(F(f))\leq \mathcal{E}(f)
\end{equation}
for any $f\in\mathcal{D}$ and any normal contraction $F$. Of course (\ref{E:normalcontract}) implies the same estimate for all $T_\alpha$ (being special normal contractions). 

The next definition introduces a measure independent closability property which is a key notion for our subsequent considerations.

\begin{definition}
We say that a bilinear form $(\mathcal{E},\mathcal{D})$ is \emph{sup-norm-closable} if for any $\mathcal{E}$-Cauchy sequence $(f_n)_n\subset\mathcal{D}$ with $\lim_{n\to\infty}\left\|f_n\right\|_{\sup}=0$ we have $\lim_{n\to\infty}\mathcal{E}(f_n)=0$.
\end{definition}

We are specifically interested in a family of linear functionals associated with a bilinear form $(\mathcal{E},\mathcal{D})$ on an algebra $\mathcal{D}$. Given $f\in\mathcal{D}$, consider the linear functional $L_f:\mathcal{D}\to\mathbb{R}$ given by 
\begin{equation}\label{E:Lagrange1}
L_f(h):=2\mathcal{E}(fh,f)-\mathcal{E}(f^2,h),
\end{equation}
$h\in\mathcal{D}$. We can obtain a bilinear form $L:\mathcal{D}\times\mathcal{D}\to\mathcal{D}'$ by polarization,
\[L_{f,g}:=\frac14\left(L_{f+g}-L_{f-g}\right), \ \ f,g\in\mathcal{D}.\]
Note that
\begin{equation}\label{E:Leibniz}
2\mathcal{E}(fg,h)=L_{f,h}(g)+L_{g,h}(f), \ \ f,g,h\in\mathcal{D}.
\end{equation}
A linear functional $L:\mathcal{D}\to\mathbb{R}$ is said to be \emph{positive} if $h\geq 0$ implies $L(h)\geq 0$. If all $L_f$, $f\in\mathcal{D}$, are positive then bilinearity and Cauchy-Schwarz imply the useful estimate
\begin{equation}\label{E:bilinest}
|L_f(h)^{1/2}-L_g(h)^{1/2}|\leq L_{f-g}(h)^{1/2}
\end{equation}
for any $f,g\in\mathcal{D}$ and $h\in\mathcal{D}^+$. In Section \ref{S:PLF} we verify that, roughly speaking, for sup-norm closable forms $(\mathcal{E},\mathcal{D})$ on which normal contractions operate all functionals $L_f$ are positive. 

The following definition discusses the functionals $L_{f,g}$ from a more abstract point of view. 

\begin{definition}\label{D:Lagrange}
Let $\mathcal{D}$ be an algebra of bounded real valued functions having the Stone property and carrying the supremum norm. 
A bilinear map $L:\mathcal{D}\times\mathcal{D}\to \mathcal{D}'$, $(f,g)\mapsto L_{f,g}$,
is called a \emph{Lagrangian} if it is symmetric (i.e. $L_{g,f}=L_{f,g}$), positive definite (i.e. the functional $L_f:=L_{f,f}$ is positive) and if the unit contraction operates on $(L,\mathcal{D})$, i.e. $L_{T_1(f)}\leq L_f$, $f\in\mathcal{D}$. If in addition $\mathcal{D}$ is stable under normal contractions and if for any normal contraction $F$ we have $L_{F(f)}\leq L_f$, $f\in\mathcal{D}$,
then we say that \emph{normal contractions operate on $(L,\mathcal{D})$}. If there exists a bilinear form $(\mathcal{E},\mathcal{D})$ such that identity (\ref{E:Lagrange1}) holds, 
then we refer to $(L,\mathcal{D})$ as \emph{the Lagrangian generated by $(\mathcal{E},\mathcal{D})$}. 
\end{definition}

Of course there are many prior studies that investigated the functionals $L_f$ in the context of Dirichlet forms, see for instance \cite{BH91, FOT94, HinoRam, LJ78, S74} and in particular \cite{Mosco}, which inspired our nomenclature.

Given a Lagrangian $(L,\mathcal{D})$ we can define a bilinear form $(\mathcal{E}_L,\mathcal{D})$ by polarizing
\[\mathcal{E}_L(f):=\frac12\left\|L_f\right\|_{\mathcal{D}'},\ \ f\in\mathcal{D},\]
see Section \ref{S:Lagrange} for details. We refer to $(\mathcal{E}_L,\mathcal{D})$ as the \emph{energy form associated with $(L,\mathcal{D})$}.

Sup-norm-closability can also be defined for Lagrangians. This is helpful to discuss the connection between Lagrangians and Dirichlet forms.

\begin{definition}
A Lagrangian $(L,\mathcal{D})$ is called \emph{sup-norm-closable} if for any $h\in\mathcal{D}^+$ the bilinear form $(f,g)\mapsto L_{f,g}(h)$ is sup-norm-closable.
\end{definition}

If $\mathbf{1}\in\mathcal{D}$ then obviously $(L,\mathcal{D})$ is sup-norm-closable if and only if its associated energy is. In general, if $(L,\mathcal{D})$ is a sup-norm closable Lagrangian then its associated energy $(\mathcal{E}_L,\mathcal{D})$  is a sup-norm closable bilinear form, see Proposition \ref{P:LtoEL}. Under mild conditions (including an energy separability condition, Definition \ref{D:energysep} below) we can also prove the converse, i.e. that a sup-norm closable bilinear form $(\mathcal{E},\mathcal{D})$ generates a sup-norm closable Lagrangian $(L,\mathcal{D})$ by (\ref{E:Lagrange1}), see Corollary \ref{C:ELtoL} below.

In particular, any Dirichlet form gives rise to a sup-norm-closable Lagrangian and therefore also to a sup-norm-closable bilinear form. In other words, Lagrangians can capture the entire Dirichlet form setup. A proof of the following theorem is given in Section \ref{S:Dirichlet}.

\begin{theorem}\label{T:Dirichletform}
Let $(X,\mathcal{X},\mu)$ be a $\sigma$-finite measure space and $(\mathcal{E},\mathcal{F})$ a Dirichlet form on $L_2(X,\mu)$. Let 
\begin{equation}\label{E:B}
\mathcal{B}:=\left\lbrace f\in b\mathcal{X}: \text{ the $\mu$-class of $f$ is in $\mathcal{F}$}\right\rbrace.
\end{equation}
Then formula (\ref{E:Lagrange1}) defines a sup-norm-closable Lagrangian $(L,\mathcal{B})$ on which normal contractions operate. Moreover, $(\mathcal{E},\mathcal{B})$ is a sup-norm-closable bilinear form on which normal contractions operate.
\end{theorem}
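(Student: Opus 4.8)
The plan is to read the statement as three separate assertions and to isolate the only genuinely new point, which is sup-norm-closability of $(\mathcal{E},\mathcal{B})$ when $\mu$ is infinite.

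First I would record the algebraic and structural prerequisites. For $f,g\in\mathcal{B}$ I set $\mathcal{E}(f,g):=\mathcal{E}([f],[g])$, where $[f]$ denotes the $\mu$-class; this is finite (since $\mathcal{F}\subset L_2(X,\mu)$), independent of the chosen representatives, and inherits symmetry and nonnegative definiteness from $(\mathcal{E},\mathcal{F})$. Standard Dirichlet form theory (cf. \cite[Theorem 1.4.2]{FOT94}) shows that $\mathcal{F}\cap L_\infty$ is an algebra that is stable under normal contractions; hence $\mathcal{B}$ is a vector lattice with the Stone property, is stable under normal contractions, and is an algebra, so that $fh,f^2\in\mathcal{B}$ and the functionals $L_f$ of (\ref{E:Lagrange1}) are well defined on $\mathcal{B}$. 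The Markovian property of Dirichlet forms (cf. \cite[Theorem 1.4.1]{FOT94}) gives $\mathcal{E}(F(f))\le\mathcal{E}(f)$ for every normal contraction $F$, that is, normal contractions operate on $(\mathcal{E},\mathcal{B})$.

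Second, and this is the heart of the matter, I would prove that $(\mathcal{E},\mathcal{B})$ is sup-norm-closable. Let $(f_n)_n\subset\mathcal{B}$ be $\mathcal{E}$-Cauchy with $\left\|f_n\right\|_{\sup}\to0$. Uniform convergence forces $f_n\to0$ pointwise on $X$, hence $\mu$-a.e. The decisive tool is the closability of the extended Dirichlet space: for a Dirichlet form, any $\mathcal{E}$-Cauchy sequence in $\mathcal{F}_e$ that converges $\mu$-a.e. to some $u$ satisfies $u\in\mathcal{F}_e$ and $\mathcal{E}(f_n-u)\to0$ (see \cite[Theorem 1.5.2]{FOT94} for the transient case and \cite{ChF12} for the general case). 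Since $f_n\in\mathcal{F}\subset\mathcal{F}_e$, applying this with $u=0$ yields $\mathcal{E}(f_n)\to0$, which is exactly sup-norm-closability. I expect this step to be the main obstacle. For finite $\mu$ one has $\left\|f_n\right\|_{L_2}\le\mu(X)^{1/2}\left\|f_n\right\|_{\sup}\to0$, so ordinary $L_2$-closability of the closed form $(\mathcal{E},\mathcal{F})$ suffices; but for infinite $\sigma$-finite $\mu$ uniform convergence no longer implies $L_2$-convergence, and it is precisely the replacement of $L_2$-convergence by $\mu$-a.e. convergence that the extended Dirichlet space provides. The delicate point is therefore to invoke this closability in the present general (not necessarily regular or transient) setting rather than in the classical framework.

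Finally I would assemble the Lagrangian statement. Since $(\mathcal{E},\mathcal{B})$ is sup-norm-closable and normal contractions operate on it, Theorem \ref{T:positive} shows that every $L_f$, $f\in\mathcal{B}$, is positive; by polarization $(L,\mathcal{B})$ is then a symmetric and positive definite bilinear map. That the unit contraction and, more generally, normal contractions operate on $(L,\mathcal{B})$, i.e. $L_{F(f)}\le L_f$, follows from the standard contraction property of the functionals $L_f$ attached to a Dirichlet form, \cite[Proposition I.4.1.1]{BH91}. It then remains to show that the generated Lagrangian is itself sup-norm-closable; for this I would invoke Corollary \ref{C:ELtoL}, which upgrades sup-norm-closability of the bilinear form to that of the generated Lagrangian once the energy separability condition of Definition \ref{D:energysep} is verified, a condition I would check directly in the present situation. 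Combined with the two previous steps, this proves that $(L,\mathcal{B})$ is a sup-norm-closable Lagrangian on which normal contractions operate, and that $(\mathcal{E},\mathcal{B})$ is a sup-norm-closable bilinear form on which normal contractions operate, completing the argument.
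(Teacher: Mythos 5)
Your treatment of the bilinear form $(\mathcal{E},\mathcal{B})$ is fine and in fact takes a different route from the paper: you get sup-norm-closability of $(\mathcal{E},\mathcal{B})$ directly from the well-definedness of the extended Dirichlet space (an $\mathcal{E}$-Cauchy sequence converging $\mu$-a.e.\ to $0$ is an approximating sequence for $0$, so its energies must tend to $\mathcal{E}(0)=0$), whereas the paper never invokes $\mathcal{F}_e$ at all; it proves sup-norm-closability of the Lagrangian first and then deduces the statement for $(\mathcal{E},\mathcal{B})$ from $\mathcal{E}_L=\mathcal{E}$ (Corollary \ref{C:consistent}, the density of $\mathcal{B}_0$) and Proposition \ref{P:LtoEL}. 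Your route is legitimate provided one is careful to cite the extended-Dirichlet-space theorem in the generality actually needed here (a not necessarily regular or transient form on a $\sigma$-finite measure space without topology); that version is available in \cite{ChF12}.

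The genuine gap is in your last step, the sup-norm-closability of the Lagrangian $(L,\mathcal{B})$. You propose to get it from Corollary \ref{C:ELtoL}, but this is circular: the paper proves Corollary \ref{C:ELtoL} by transferring to the Gelfand spectrum and then applying Theorem \ref{T:Dirichletform} to the transferred form, so it cannot be used as an ingredient in the proof of Theorem \ref{T:Dirichletform}. Moreover its hypotheses are not available here: $\mathcal{B}$ need not contain a strictly positive function (the paper adds this as an explicit extra assumption in Corollary \ref{C:changeofmeasure}), and energy separability of $(\mathcal{E},\mathcal{B})$ fails for a general $\sigma$-finite measure space --- Remark \ref{R:energysep}(ii) only guarantees it when $\mathcal{X}$ is countably generated. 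Nor can you reduce the Lagrangian statement to the one you already proved for $\mathcal{E}$: the estimate $L_f(h)\leq 2\left\|h\right\|_{\sup}\mathcal{E}(f)$ goes the wrong way, so an $L(h)$-Cauchy sequence need not be $\mathcal{E}$-Cauchy, and sup-norm-closability of $(\mathcal{E},\mathcal{B})$ does not formally imply that of $(f,g)\mapsto L_{f,g}(h)$. This implication is precisely the hard content of the theorem, and the paper's proof supplies a separate argument for it: using the global Leibniz property and (\ref{E:Lagrangesupbound}) one shows that $(f_nh)_n$ is weakly $\mathcal{E}$-Cauchy, combines this with $\lim_n\int_X f_nhv\,d\mu=0$ to conclude that the $\mu$-classes of $f_nh$ form a weak Cauchy sequence in the Hilbert space $(\mathcal{F},\mathcal{E}_1)$ whose weak limit must be $0$, and then deduces $\lim_nL_{f_n,v}(h)=0$ and finally $\lim_nL_{f_n}(h)=0$ via Cauchy--Schwarz. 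Some argument of this kind is missing from your proposal and must be supplied.
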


Next, we are interested in a measure representation for Lagrangians. To formulate it we use representation theorems for linear functionals in terms of finitely additive measures proved by Hewitt \cite{Hew51}. The term \emph{finitely additive measure} is used for a set function $\mu$ on an algebra $\mathcal{A}$ with values in $[0,+\infty]$ and such that $\mu(\emptyset)=0$ and $\mu(A\cup B)=\mu(A)+\mu(B)$ for any disjoint $A,B\in\mathcal{A}$.

Let $\alpha(\mathcal{S})$ denote the algebra of subsets of $X$ generated by the collection of sets of form $\left\lbrace f>0\right\rbrace$, $f\in\mathcal{D}$. If $\mathcal{D}$ contains a strictly positive function then for any $f\in\mathcal{D}$ there is a finite and finitely additive measure $\Gamma(f)$ (in the sense of \cite{BRBR, DS,Hew51}) on $\alpha(\mathcal{S})$ such that 
\[L_f(h)=\int_Xhd\Gamma(f),\ \ h\in\mathcal{D},\]
see Theorem \ref{T:intrepadd}. In the context of regular Dirichlet forms the measure $\Gamma(f)$ is just the energy measure of $f$, cf. \cite{FOT94, LJ78, S74}. 

Recall that two finitely additive measures $\mu$ and $\nu$ on $\alpha(\mathcal{S})$, $\nu$ is said to be \emph{absolutely continuous with respect to $\mu$}, $\nu<<\mu$, if given $\varepsilon>0$ there exists some $\delta>0$ such that $\nu(A)<\varepsilon$ for any $A\in\alpha(\mathcal{S})$ with $\mu(A)<\delta$, cf. \cite{BRBR}. 

A set function $\mu$ on an algebra $\mathcal{A}$ with values in $[0,+\infty]$ and such that $\mu(\emptyset)=0$ and $\mu(\bigcup_{i=1}^\infty A_i)=\sum_{i=1}^\infty \mu(A_i)$ for any sequence $A_1,A_2,\dots$ of pairwise disjoint sets $A_i\in\mathcal{A}$ with $\bigcup_{i=1}^\infty A_i\in\mathcal{A}$ will be called a \emph{measure} on $\mathcal{A}$. We use the term 'measure' exclusively for countably additive set functions, in contrast to 'finitely additive measure'. Recall that by Caratheodory's theorem any finite measure $\mu$ on $\mathcal{A}$ extends uniquely to a finite measure on the $\sigma$-algebra $\sigma(\mathcal{A})$ generated by $\mathcal{A}$.

Similarly as in \cite{Hino08, Hino10} we consider the following situation. 

\begin{definition}
A measure $m$ on $\alpha(\mathcal{S})$ is called \emph{energy dominant for $(L,\mathcal{D})$} if all $\Gamma(f)$, $f\in\mathcal{D}$, are absolutely continuous with respect to $m$. Given a bilinear form $(\mathcal{E},\mathcal{D})$, we say that $m$ is \emph{energy dominant for $(\mathcal{E},\mathcal{D})$} if it is energy dominant for the Lagrangian the form generates.
\end{definition}

If $m$ is energy dominant, then automatically all $\Gamma(f)$ will be countably additive on $\alpha(\mathcal{S})$ and therefore extend uniquely to measures on the generated $\sigma$-algebra $\sigma(\mathcal{D})$.

The next of our main results tells that if there exists a finite energy dominant meausure then we can pass from a sup-norm-closable form to a Dirichlet form. By $C_c^1(\mathbb{R}^2)$ we denote the space of compactly supported $C^1$-functions on $\mathbb{R}^2$. We say that $\mathcal{D}$ is $C_c^1(\mathbb{R}^2)$-stable if for any $f,g\in\mathcal{D}$ and $\varphi\in C_c^1(\mathbb{R}^2)$ with $\varphi(0)=0$ we have $\varphi(f,g)\in\mathcal{D}$.

\begin{theorem}\label{T:clos}
Let $\mathcal{D}$ be a $C^1_c(\mathbb{R}^2)$-stable algebra containing a strictly positive function $\chi>0$ and let $(\mathcal{E},\mathcal{D})$ be a sup-norm-closable bilinear form on which normal contractions operate. If $m$ is an energy dominant measure for $(\mathcal{E},\mathcal{D})$, then $(\mathcal{E},\mathcal{D})$ is closable on $L_2(X,m)$, and its closure is a Dirichlet form that admits a carr\'e du champ, i.e. all energy measures $\Gamma(f)$, $f\in\mathcal{D}$, have $m$-integrable densities.
\end{theorem}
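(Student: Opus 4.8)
The plan is to establish two things: (1) $L_2$-closability of $(\mathcal{E},\mathcal{D})$ on $L_2(X,m)$, and (2) that the resulting closure admits energy densities with respect to $m$ (the carr\'e du champ property). Since normal contractions operate on $(\mathcal{E},\mathcal{D})$ by hypothesis, this property will pass to the closure, so once closability is shown the closure is automatically a Dirichlet form; the real content is closability plus the absolute continuity of the energy measures turning into honest $L_1$-densities.

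First I would set up the representation machinery. Because $\mathcal{D}$ contains the strictly positive function $\chi$, Theorem \ref{T:intrepadd} gives finitely additive energy measures $\Gamma(f)$ on $\alpha(\mathcal{S})$ with $L_f(h)=\int_X h\,d\Gamma(f)$ for $h\in\mathcal{D}$, and $\mathcal{E}(f)=\tfrac12 L_f(\mathbf{1})=\tfrac12\Gamma(f)(X)$ (using $\chi$ to make sense of integrating the constant $\mathbf{1}$). The energy-dominant hypothesis $\Gamma(f)\ll m$ forces each $\Gamma(f)$ to be countably additive, hence a genuine finite measure, and by Radon--Nikodym we obtain a density $d\Gamma(f)/dm\in L_1(X,m)$. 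This already delivers the carr\'e du champ statement \emph{provided} we know the closure exists and these $\Gamma(f)$ are indeed its energy measures; so the crux remains closability.

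For closability I would exploit the $C^1_c(\mathbb{R}^2)$-stability to reduce to the Euclidean Beurling--Deny structure, as the introduction advertises via ``coordinate bilinear forms.'' The idea: take an $\mathcal{E}$-Cauchy sequence $(f_n)\subset\mathcal{D}$ converging to $0$ in $L_2(X,m)$; I must show $\mathcal{E}(f_n)\to 0$. Passing to a subsequence, $f_n\to 0$ $m$-a.e. The energy densities $d\Gamma(f_n)/dm$ are controlled by a uniform integrability argument: the Cauchy condition bounds $\sup_n\Gamma(f_n)(X)=2\sup_n\mathcal{E}(f_n)<\infty$, and the inequality (\ref{E:bilinest}) gives $|\Gamma(f_n)^{1/2}-\Gamma(f_k)^{1/2}|$ control in the appropriate $L_2(m)$ sense, so $(\Gamma(f_n)^{1/2})$ is Cauchy in $L_2(X,m)$ and converges to some $g\in L_2(X,m)$. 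It then suffices to identify $g=0$ $m$-a.e., which is where the pointwise structure of the densities via the coordinate forms and the Beurling--Deny representation enters: locally the energy density behaves like $|\nabla\varphi(f,g)|^2$-type expressions, and $f_n\to 0$ $m$-a.e.\ together with $C^1_c$-stability forces the limiting density to vanish. Then $\mathcal{E}(f_n)=\tfrac12\int_X (\Gamma(f_n)^{1/2})^2\,dm\to \tfrac12\int_X g^2\,dm=0$.

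The main obstacle, as I see it, is the last identification step: passing from $m$-a.e.\ convergence $f_n\to 0$ to vanishing of the limiting energy density $g$. The functionals $\mathcal{E}(f_n)$ see only the ``gradient'' of $f_n$, not $f_n$ itself, so $L_2$-smallness of $f_n$ does not directly control $\Gamma(f_n)$; this is precisely the phenomenon that makes closability a nontrivial (and sometimes false) property. The sup-norm-closability hypothesis is the tool that bridges this gap, but it is formulated in terms of uniform convergence rather than $m$-a.e.\ convergence, so the technical heart of the proof will be a truncation-and-approximation argument---replacing $f_n$ by normal contractions $T_\alpha(f_n)$ and exploiting the coordinate Beurling--Deny representation together with uniform integrability---to convert the $m$-a.e./$L_2$ control into the uniform control that sup-norm-closability can consume. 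I expect the $C^1_c(\mathbb{R}^2)$-stability to be essential here precisely because it allows one to manipulate two functions simultaneously and invoke the two-dimensional Beurling--Deny decomposition on the Euclidean image, localizing the energy and isolating the diagonal (local) part where the density vanishes in the limit.
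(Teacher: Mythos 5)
Your outline is, at the level of strategy, the same as the paper's proof: uniform integrability of the densities (your observation that $(\Gamma(f_n)^{1/2})_n$ is Cauchy in $L_2(X,m)$ is exactly Lemma \ref{L:UI}), followed by a truncation argument that converts $L_2(m)$-smallness of $f_n$ into uniform smallness so that sup-norm-closability can be applied. But the step you yourself flag as ``the technical heart'' is left entirely unexecuted, and it is precisely the nontrivial content of the theorem. What is needed is the quantitative estimate of Corollary \ref{C:dominate},
\[
\mathcal{E}\bigl(f-\varphi_\alpha(f)\bigr)\;\leq\; 4\,\Gamma(f)\bigl(\left\lbrace |f|\geq\alpha\right\rbrace\bigr),
\]
where $\varphi_\alpha$ is the $C^1$ truncation (\ref{E:phialpha}) equal to the identity on $[-\alpha,\alpha]$ and constant outside $[-2\alpha,2\alpha]$. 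Deriving it requires the \emph{one-dimensional} coordinate forms $E^f(F,G)=\mathcal{E}(F(f),G(f))$, their Beurling--Deny decomposition (\ref{E:BD}), the localization of the representing measures (Lemma \ref{L:compactsupponedim}), the extension of the representation to non-compactly-supported $C^1$ functions (Lemma \ref{L:imagemeas} --- which itself consumes sup-norm-closability), and the symmetry estimate (\ref{E:nonlocalest}) for the jump measure $J^f$. None of this is supplied in your proposal. Moreover, your intermediate reformulation --- let $g=\lim_n\Gamma(f_n)^{1/2}$ in $L_2(X,m)$ and show $g=0$ --- is circular as stated: since $\int_X\Gamma(f_n)\,dm=\Gamma(f_n)(X)=2\mathcal{E}(f_n)$, one has $\int_X g^2\,dm=2\lim_n\mathcal{E}(f_n)$, so proving $g=0$ \emph{is} the closability claim restated, and ``$f_n\to 0$ $m$-a.e.\ forces the limiting density to vanish'' is exactly the implication one is not entitled to assume.

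Two smaller corrections. Even granted Corollary \ref{C:dominate}, a careful subsequence extraction remains (Proposition \ref{P:subseq}): the truncation levels $1/k_j$ must decrease slowly enough that the uniform integrability threshold and the bound $m(\lbrace|f_n|\geq 1/k_j\rbrace)<1/k_j$ hold simultaneously along a subsequence $(g_j)_j$; only then does $u_j=\varphi_{1/k_j}(g_j)$ tend to zero uniformly while staying $\mathcal{E}$-Cauchy, so that sup-norm-closability applies. And the $C^1_c(\mathbb{R}^2)$-stability does not enter the closability argument through a two-dimensional Beurling--Deny decomposition as you anticipate; it is used only via Theorem \ref{T:positive} to guarantee positivity of the functionals $L_f$, without which the energy measures $\Gamma(f)$ do not exist in the first place. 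The closability proof itself is carried out entirely with one-dimensional coordinate forms.
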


The $C^1_c(\mathbb{R}^2)$-stability condition in Theorem \ref{T:clos} can be replaced by an invertibility condition also used in Theorem \ref{T:positive2} below, and if all $L_f$, $f\in\mathcal{D}$, are known to be positive, it can be omitted. 

\begin{remark}
As mentioned in the introduction, a version of this theorem for the locally compact case follows already from the statements in \cite{Mo95}, in particular Proposition 1 and Th\'eor\`eme 9. 
\end{remark}

If $(L,\mathcal{D})$ enjoys a certain separability property, then the existence of an energy dominant measure is merely a question of countable additivity.

\begin{definition}\label{D:energysep}
A Lagrangian $(L,\mathcal{D})$ is called \emph{energy separable} if there exists a countable collection of functions $\left\lbrace f_n\right\rbrace_n\subset \mathcal{D}$ such that all $\Gamma(f)$, $f\in\mathcal{D}$, are absolutely continuous with respect to the finitely additive measure given by
\begin{equation}\label{E:standardsum}
\sum_{n=1}^\infty 2^{-n}\:\frac{\Gamma(f_n)(A)}{1+\Gamma(f_n)(X)},\ \ A\in \alpha(\mathcal{S}).
\end{equation}
A bilinear form $(\mathcal{E},\mathcal{D})$ is called \emph{energy separable} if its Lagrangian is energy separable.
\end{definition}

If $X$ carries a locally compact Hausdorff topology, then countable additivity is a consequence of Riesz' representation theorem, see Remark \ref{R:lcccase} (i) below. Let $C_0(X)$ denote the space of continuous functions on $X$ vanishing at infinity.

\begin{corollary}\label{C:domandclos}
Let $X$ be a locally compact Hausdorff space and let $\mathcal{D}$ be a $C^1_c(\mathbb{R}^2)$-stable dense subalgebra of $C_0(X)$ containing a strictly positive function $\chi>0$. If $(\mathcal{E},\mathcal{D})$ is a sup-norm closable bilinear form on which normal contractions operate and which is energy separable, then (\ref{E:standardsum}) provides a finite energy dominant Radon measure $m$ on $X$ such that $(\mathcal{E},\mathcal{D})$ is closable on $L_2(X,m)$ and its closure is a Dirichlet form admitting a carr\'e du champ.
\end{corollary}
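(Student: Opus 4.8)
The plan is to combine Theorem \ref{T:clos} with a verification that the energy separability hypothesis, in this locally compact setting, upgrades the finitely additive measure (\ref{E:standardsum}) to a genuine countably additive Radon measure. Once that upgrade is established, the hypotheses of Theorem \ref{T:clos} are met with $m$ taken to be the extension of (\ref{E:standardsum}), and the conclusion (closability on $L_2(X,m)$, closure a Dirichlet form with carr\'e du champ) follows immediately. So the real content of the corollary is producing a bona fide Radon measure and checking it is energy dominant.

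First I would argue that each individual energy measure $\Gamma(f)$ is countably additive. Since $\mathcal{D}\subset C_0(X)$ and the unit contraction together with normal contractions operate on a sup-norm-closable form, Theorem \ref{T:positive} guarantees the functionals $L_f$ are positive, and Theorem \ref{T:intrepadd} represents each $L_f$ by a finite finitely additive measure $\Gamma(f)$ on $\alpha(\mathcal{S})$. The key point is that $L_f$ is a positive linear functional on a dense subalgebra of $C_0(X)$; by density and boundedness it extends to a positive linear functional on $C_0(X)$, and the Riesz representation theorem then yields a finite (countably additive) Radon measure representing it. This is precisely the mechanism announced in the reference to Remark \ref{R:lcccase}~(i). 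Thus each $\Gamma(f)$ is in fact Radon.

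Next I would assemble the dominating measure. With the countably many $f_n$ supplied by energy separability, each $\Gamma(f_n)$ is now a finite Radon measure, so the series (\ref{E:standardsum}) converges (it is dominated by $\sum_n 2^{-n}=1$) and defines a finite countably additive Radon measure $m$ on $X$: countable additivity passes through the uniformly convergent series by monotone convergence, and inner/outer regularity are inherited from the summands. Because $\mathcal{D}$ is $C_c^1(\mathbb{R}^2)$-stable and contains a strictly positive $\chi$, the standing hypotheses of Theorem \ref{T:clos} on $\mathcal{D}$ are satisfied. The energy separability hypothesis states exactly that every $\Gamma(f)$, $f\in\mathcal{D}$, is absolutely continuous with respect to the finitely additive measure (\ref{E:standardsum}); since that finitely additive measure and its countably additive extension $m$ agree on $\alpha(\mathcal{S})$, the absolute continuity $\Gamma(f)\ll m$ holds on $\alpha(\mathcal{S})$, so $m$ is energy dominant for $(\mathcal{E},\mathcal{D})$ in the required sense. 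Invoking Theorem \ref{T:clos} with this $m$ then delivers the stated conclusion.

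The main obstacle I anticipate is the countable-additivity upgrade, i.e. the clean passage from the finitely additive $\Gamma(f)$ on $\alpha(\mathcal{S})$ to a Radon measure and the verification that the two notions of absolute continuity (with respect to the finitely additive (\ref{E:standardsum}) versus its countably additive extension) are compatible. One must check that extending $L_f$ from the dense subalgebra $\mathcal{D}$ to all of $C_0(X)$ does not alter its values on $\alpha(\mathcal{S})$-sets, and that Hewitt's finitely additive representation coincides with the Riesz representation on the generating algebra; the Stone--Weierstrass-type density of $\mathcal{D}$ in $C_0(X)$ and the uniqueness in Riesz' theorem should reconcile these. Everything else is routine: the series manipulation, regularity of $m$, and the direct application of Theorem \ref{T:clos}.
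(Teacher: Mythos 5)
Your proposal is correct and follows essentially the same route as the paper: the paper's proof is the single sentence that the corollary is ``an immediate consequence of Theorem \ref{T:clos} together with Remark \ref{R:lcccase}~(i)'', and your argument simply unpacks that remark --- Riesz representation upgrading each $\Gamma(f)$ to a finite Radon measure, summation via (\ref{E:standardsum}) to get a finite energy dominant Radon measure, then Theorem \ref{T:clos}. The compatibility issue you flag (Hewitt's finitely additive representation versus the Riesz measure on $\alpha(\mathcal{S})$) is left implicit in the paper as well, so you are if anything slightly more careful than the source.
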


For base spaces $X$ without specified topology or a carrying a non-locally compact topology we can transfer a given bilinear form $(\mathcal{E},\mathcal{D})$ to a bilinear form $(\hat{\mathcal{E}},\hat{\mathcal{D}})$ acting on functions on the \emph{Gelfand spectrum} $\Delta$ of the $C^\ast$-algebra generated by $\mathcal{D}$. Details are provided in Section \ref{S:Gelfand}. Since $\Delta$ is always a locally compact Hausdorff space, Corollary \ref{C:domandclos} applies to the transferred form $(\hat{\mathcal{E}},\hat{\mathcal{D}})$, cf. Theorem \ref{T:closGelfand}. This implies the following results on the sup-norm closability of the original form $(\mathcal{E},\mathcal{D})$.

\begin{corollary}\label{C:ELtoL}
Let $\mathcal{D}$ be a $C^1_c(\mathbb{R}^2)$-stable algebra containing a strictly positive function $\chi>0$ and let $(\mathcal{E},\mathcal{D})$ be an energy separable bilinear form on which normal contractions operate. Then the Lagrangian $(L,\mathcal{D})$ generated by $(\mathcal{E},\mathcal{D})$ is sup-norm-closable if and only if $(\mathcal{E},\mathcal{D})$ is sup-norm-closable.
\end{corollary}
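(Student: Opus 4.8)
The plan is to read the claimed equivalence as two separate implications and to route each through a statement already available. The standing hypotheses on $\mathcal{D}$ (a $C^1_c(\mathbb{R}^2)$-stable algebra with a strictly positive $\chi$, normal contractions operating, energy separability) are precisely those required to pass to the Gelfand spectrum, so the substance of the proof is to assemble the earlier results in the right order rather than to prove anything from scratch.

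For the direction ``$(L,\mathcal{D})$ sup-norm-closable $\Rightarrow (\mathcal{E},\mathcal{D})$ sup-norm-closable'' I would argue as follows. Since $(L,\mathcal{D})$ is a sup-norm-closable Lagrangian, Proposition \ref{P:LtoEL} gives that its associated energy $(\mathcal{E}_L,\mathcal{D})$ is a sup-norm-closable bilinear form, and it remains to transfer this property from $\mathcal{E}_L$ to $\mathcal{E}$ itself. If $\mathbf{1}\in\mathcal{D}$ this is exactly the remark recorded before Theorem \ref{T:Dirichletform}, so the only genuine content is the case $\mathbf{1}\notin\mathcal{D}$. There I would invoke the relation between $\mathcal{E}$ and $\mathcal{E}_L$ and the killing functional developed in Sections \ref{S:zerokill}--\ref{S:Lagrange}: the discrepancy between the two forms is governed by the killing contribution and only rescales the energy, so $\mathcal{E}$ and $\mathcal{E}_L$ share the same Cauchy sequences and the same null-limits, and sup-norm-closability passes from one to the other.

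For the converse ``$(\mathcal{E},\mathcal{D})$ sup-norm-closable $\Rightarrow (L,\mathcal{D})$ sup-norm-closable'', which is the substantive direction, I would make a round trip through the Gelfand spectrum. First, since $(\mathcal{E},\mathcal{D})$ is sup-norm-closable and normal contractions operate, Theorem \ref{T:positive} makes all $L_f$ positive, so $(L,\mathcal{D})$ is a genuine Lagrangian and, using $\chi>0$, the energy measures $\Gamma(f)$ exist by Theorem \ref{T:intrepadd}. Next, transfer the form to $(\hat{\mathcal{E}},\hat{\mathcal{D}})$ on the spectrum $\Delta$ of the $C^\ast$-algebra generated by $\mathcal{D}$; the transfer is an isometric algebra isomorphism, so it preserves the supremum norm, products, $C^1_c(\mathbb{R}^2)$-stability, energy separability, the strict positivity of $\hat{\chi}$, the operation of normal contractions and sup-norm-closability, while $\hat{\mathcal{D}}$ is dense in $C_0(\Delta)$ by Stone--Weierstrass (distinct characters already differ on $\mathcal{D}$, and $\hat{\chi}$ vanishes nowhere). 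Hence Corollary \ref{C:domandclos}, equivalently Theorem \ref{T:closGelfand}, applies on $\Delta$: there is a finite energy dominant Radon measure $\hat{m}$, and the closure of $(\hat{\mathcal{E}},\hat{\mathcal{D}})$ in $L_2(\Delta,\hat{m})$ is a Dirichlet form with carr\'e du champ. I then close the loop with Theorem \ref{T:Dirichletform}: this Dirichlet form generates a sup-norm-closable Lagrangian on the algebra $\hat{\mathcal{B}}$ of all bounded measurable functions whose $\hat{m}$-classes lie in the domain. Because $\hat{\mathcal{D}}\subset\hat{\mathcal{B}}$, because the Lagrangian is defined by the same formula (\ref{E:Lagrange1}), and because $\hat{\mathcal{D}}$ is an algebra on which the closure of $\hat{\mathcal{E}}$ agrees with $\hat{\mathcal{E}}$, the restriction of this Lagrangian to $\hat{\mathcal{D}}$ is exactly $\hat{L}$ and inherits sup-norm-closability. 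Transporting back along the isometric isomorphism, which satisfies $L_f(h)=\hat{L}_{\hat{f}}(\hat{h})$ and $\|f_n\|_{\sup}=\|\hat{f}_n\|_{\sup}$, yields sup-norm-closability of $(L,\mathcal{D})$.

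I expect the main obstacle to be, in this converse direction, the verification that the Gelfand transform genuinely preserves every structural hypothesis of Corollary \ref{C:domandclos}, in particular $C^1_c(\mathbb{R}^2)$-stability and energy separability of $\hat{\mathcal{D}}$, together with the identification of the Lagrangian generated by the closed Dirichlet form on $\hat{\mathcal{B}}$ with $\hat{L}$ upon restriction to $\hat{\mathcal{D}}$; here one must check carefully that passing to the closure does not alter the values of (\ref{E:Lagrange1}) on $\hat{\mathcal{D}}\times\hat{\mathcal{D}}$. The secondary and more routine difficulty is the bookkeeping in the first direction needed to move from the associated energy $\mathcal{E}_L$ to $\mathcal{E}$ when the constant function $\mathbf{1}$ is not available.
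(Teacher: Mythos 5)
Your proposal follows the paper's own route in both directions: the implication from sup-norm-closability of $(L,\mathcal{D})$ to that of $(\mathcal{E},\mathcal{D})$ is obtained from Proposition \ref{P:LtoEL} together with the identification of $\mathcal{E}$ with the associated energy $\mathcal{E}_L$, and the converse is exactly the paper's round trip through the Gelfand spectrum, i.e.\ Theorem \ref{T:closGelfand} followed by Theorem \ref{T:Dirichletform} and the pull-back along $L_f(h)=L_{\hat f}(\hat h)$ with preservation of sup-norms. The one point to correct is your description of the step from $\mathcal{E}_L$ to $\mathcal{E}$: this is not a ``rescaling by the killing contribution'' but the exact equality $\mathcal{E}_L=\mathcal{E}$ on $\mathcal{D}_0$ of Corollary \ref{C:consistent}, which is the statement you should invoke (noting that, like the paper, you leave the passage from $\mathcal{D}_0$ to all of $\mathcal{D}$ implicit).
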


\section{Sup-norm closable bilinear forms and positive linear functionals}\label{S:PLF}

In this section we investigate the positivity of the linear functionals $L_f$ as defined in (\ref{E:Lagrange1}) associated with a given bilinear form $(\mathcal{E},\mathcal{D})$.
Roughly speaking, Theorem \ref{T:positive} below shows that if normal contractions operate then sup-norm-closability implies the positivity of the functionals $L_f$, $f\in\mathcal{D}$, and a contraction property. Given two linear functionals $L, M:\mathcal{D}\to\mathbb{R}$, we write $L\leq M$ if the linear functional $M-L$ positive. 

\begin{theorem}\label{T:positive} 
Let $(\mathcal{E},\mathcal{D})$ be a sup-norm-closable bilinear form on which normal contractions operate. Assume in addition that $\mathcal{D}$ is $C_c^1(\mathbb{R}^2)$-stable. Then for any $f\in\mathcal{D}$ the linear functional $L_f$ is positive and bounded, more precisely,
\begin{equation}\label{E:Lagrangesupbound}
\left\|L_f\right\|_{\mathcal{D}'}\leq 2\mathcal{E}(f),
\end{equation}
and for any normal contraction $F$ we have 
\begin{equation}\label{E:LagrangeMarkov}
L_{F(f)}\leq L_f.
\end{equation}
\end{theorem}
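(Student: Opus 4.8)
The plan is to reduce the three assertions to the Beurling--Deny structure theory for Dirichlet forms on the Euclidean plane by passing to a \emph{coordinate (image) form}. Fix $f\in\mathcal{D}$ and $h\in\mathcal{D}^+$, put $u:=(f,h)\colon X\to\mathbb{R}^2$, a bounded map with $K:=\overline{u(X)}$ compact and $K\subset\{y\ge 0\}$ because $h\ge 0$, and consider the algebra $\mathcal{A}$ of functions $\varphi\in C^1_c(\mathbb{R}^2)$ with $\varphi(0)=0$ together with
\[
a(\varphi,\psi):=\mathcal{E}(\varphi\circ u,\psi\circ u),\qquad \varphi,\psi\in\mathcal{A}.
\]
By $C^1_c(\mathbb{R}^2)$-stability this is well defined, and it inherits the three relevant features of $(\mathcal{E},\mathcal{D})$: it is nonnegative definite and symmetric; normal contractions operate on it, since $F(\varphi)\circ u=F(\varphi\circ u)$ and (\ref{E:normalcontract}) applies; and it is sup-norm-closable, because $\|\varphi_n\circ u\|_{\sup}\le\|\varphi_n\|_{\sup}$ forces an $a$-Cauchy sequence with $\|\varphi_n\|_{\sup}\to 0$ to be $\mathcal{E}$-Cauchy with $\|\varphi_n\circ u\|_{\sup}\to 0$.

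Since $\mathbb{R}^2$ is locally compact and second countable, Mokobodzki's closability criterion \cite{Mo95} (Proposition~1) then yields a Radon measure $m$ on $\mathbb{R}^2$ for which $a$ is closable in $L_2(\mathbb{R}^2,m)$, and the closure is a regular Dirichlet form with core $\mathcal{A}$. The Beurling--Deny representation \cite{BD59a, BD59b, FOT94} provides nonnegative symmetric data — a nonnegative definite matrix $(\nu_{ij})_{i,j=1}^2$ of Radon measures, a jump measure $J$ on $(\mathbb{R}^2\times\mathbb{R}^2)\setminus\mathrm{diag}$, and a killing measure $k$ — so that on $C^1_c(\mathbb{R}^2)$
\[
a(\varphi,\psi)=\sum_{i,j}\int \partial_i\varphi\,\partial_j\psi\,d\nu_{ij}
+\iint(\varphi(\xi)-\varphi(\eta))(\psi(\xi)-\psi(\eta))\,J(d\xi,d\eta)
+\int\varphi\psi\,dk.
\]
A kernel argument (a function supported off $K$ lies in the kernel of $a$, hence carries no energy) shows $\nu_{ij},J,k$ are supported in $K$, resp.\ $K\times K$; there the generating coordinate functions $p,q\in\mathcal{A}$, chosen with $p(\xi)=x$ and $q(\xi)=y$ near $K$, satisfy $p\circ u=f$, $q\circ u=h$, and $q\ge 0$.

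It remains to insert $L_f(h)=2a(pq,p)-a(p^2,q)$ into this representation. For the strongly local part the derivation (carr\'e du champ) identity gives $\int q\,d\mu^{(c)}_{\langle p\rangle}=\int q\,d\nu_{11}\ge 0$; for the killing part the integrand reduces to $2p^2q-p^2q=p^2q\ge 0$; and writing $\alpha=p(\xi),\beta=p(\eta),\gamma=q(\xi),\delta=q(\eta)$, the jump integrand collapses to the algebraic identity $2(\alpha\gamma-\beta\delta)(\alpha-\beta)-(\alpha^2-\beta^2)(\gamma-\delta)=(\gamma+\delta)(\alpha-\beta)^2\ge 0$. As $q\ge 0$ on $K$, all three pieces are nonnegative and $L_f(h)\ge 0$, giving positivity. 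Bounding the same three pieces by means of $0\le q\le\|h\|_{\sup}$ and comparing with the decomposition of $\mathcal{E}(f)=a(p,p)=\nu_{11}(K)+\iint(p(\xi)-p(\eta))^2J+\int p^2\,dk$ yields $L_f(h)\le\|h\|_{\sup}\bigl(2\mathcal{E}(f)-\nu_{11}(K)-\int p^2\,dk\bigr)\le 2\mathcal{E}(f)\,\|h\|_{\sup}$; reducing a general $h$ to $|h|\in\mathcal{D}$ gives (\ref{E:Lagrangesupbound}). For (\ref{E:LagrangeMarkov}) one repeats the computation with $F(p)$ in place of $p$: since $F$ is a normal contraction, $F'(x)^2\le 1$, $(F(x_\xi)-F(x_\eta))^2\le(x_\xi-x_\eta)^2$ and $F(x)^2\le x^2$, so the local, jump and killing contributions to $L_{F(f)}(h)$ are dominated by those of $L_f(h)$, and $q\ge 0$ yields $L_{F(f)}(h)\le L_f(h)$ for all $h\in\mathcal{D}^+$.

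The genuinely nontrivial step is the middle one: upgrading sup-norm-closability of the coordinate form to bona fide $L_2$-closability and regularity on $\mathbb{R}^2$, where Mokobodzki's theorem enters and the measure $m$ first appears. Two technical points need care there. First, the constraint $\varphi(0)=0$, forced by the possible absence of constants in $\mathcal{D}$, requires working in the ideal of $C_0(\mathbb{R}^2)$ of functions vanishing at $0$ (or passing to $\mathbb{R}^2\setminus\{0\}$ when $0\in K$). Second, one must verify that the Beurling--Deny measures are supported in $K$, so that the coordinate functions $p,q$ genuinely act as the projections in the integrands above. Everything after this reduction is the elementary algebra of the third paragraph.
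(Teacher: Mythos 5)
Your reduction to a coordinate form on $\mathbb{R}^2$ and the final algebra (the identity $2(\alpha\gamma-\beta\delta)(\alpha-\beta)-(\alpha^2-\beta^2)(\gamma-\delta)=(\gamma+\delta)(\alpha-\beta)^2$, the cancellation to $2\int q\,d\nu_{11}$ in the local part, and $\int p^2q\,dk$ in the killing part) are exactly the right endgame and match the paper's. But the middle of your argument has a genuine gap. Mokobodzki's Proposition~1 characterizes $L_2$-closability with respect to \emph{some} bounded measure in terms of sup-norm \emph{lower semicontinuity}, not sup-norm closability. You have only verified that the coordinate form $a$ is sup-norm-closable; the two notions are only shown to be equivalent (in Section \ref{S:lsc} of the paper) \emph{after} one has a finite energy dominant measure, whose construction in turn rests on the positivity of the $L_f$ that you are trying to prove. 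So invoking Mokobodzki here is either circular or unjustified. The paper deliberately avoids this: it obtains the Beurling--Deny-type decomposition of the coordinate form directly from Allain's Th\'eor\`eme~1 and Andersson's Theorem~2.4, which apply to any nonnegative symmetric form on a dense subalgebra of $C_0$ of a locally compact space on which normal contractions operate --- no closability and no reference measure are needed for the representation itself.

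The second, and more essential, gap is the one you file under ``technical points.'' The representation measures live on the \emph{punctured} domain $I^{f,g}_0=I^{f,g}\setminus\{0\}$ and are only Radon there: nothing prevents $\nu_{11}$, $J$ or $k$ from having infinite mass near the origin. Your functions $p,q,pq,p^2$ vanish \emph{at} $0$ but not in a neighborhood of $0$, so they are not in $C_c^1(I^{f,g}_0)$ and the representation formula does not apply to them a priori; the integrals $\int q\,d\nu_{11}$, etc., could even be meaningless. Bridging this is precisely the content of the paper's Lemma \ref{L:mainstatement}: one truncates $F$ to $F_n$ vanishing on $(-\tfrac1n,\tfrac1n)$, checks that $(F_n)_n$ is $E^{f,g}$-Cauchy and converges uniformly to $F$, and then uses \textbf{sup-norm-closability of $(\mathcal{E},\mathcal{D})$} to conclude $\mathcal{E}(F-F_n)\to0$ and pass to the limit in the representation via monotone convergence. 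This is the one place where the closability hypothesis does real work; in your write-up it is never actually used after the (correct but easy) observation that $a$ inherits it. A smaller instance of the same omission: for (\ref{E:LagrangeMarkov}) with a general normal contraction $F$ (merely Lipschitz), $F(p)\notin C^1$ and you cannot write $F'(x)^2\le1$ pointwise; the paper again approximates by the $C^1$ truncations (\ref{E:Fn}) and uses sup-norm-closability together with (\ref{E:bilinest}) to pass to the limit.
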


Investing a little more effort we obtain the following version of this theorem, which replaces $C_c^1(\mathbb{R}^2)$-stability assumption on $\mathcal{D}$ by an invertibility condition. Given two bounded real valued functions $f$ and $g$ on $X$ we write $fg^{-1}$ to denote the function $x\mapsto \frac{f(x)}{g(x)}$, seen as an extended real valued function. 

\begin{theorem}\label{T:positive2} 
Let $(\mathcal{E},\mathcal{D})$ be a sup-norm-closable bilinear form on which normal contractions operate. Assume in addition that for any two functions $f,g\in \mathcal{D}$ such that $fg^{-1}$ defines a bounded function on $X$, we have $fg^{-1}\in\mathcal{D}$. Then for any $f\in\mathcal{D}$ the linear functional $L_f$ is positive and the estimates (\ref{E:Lagrangesupbound}) and (\ref{E:LagrangeMarkov}) hold.
\end{theorem}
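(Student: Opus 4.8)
\section*{Proof proposal for Theorem \ref{T:positive2}}

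The plan is to reduce both estimates to the single assertion that each $L_f$ is positive, and to obtain positivity by transporting $(\mathcal{E},\mathcal{D})$ to a Dirichlet form on the plane. Granting positivity, the bound (\ref{E:Lagrangesupbound}) will follow from $|L_f(h)|\le L_f(|h|)$ together with the fact that the total mass of the measure representing $L_f$ does not exceed $2\mathcal{E}(f)$, and the contraction estimate (\ref{E:LagrangeMarkov}) will follow from the corresponding contraction property of energy measures under normal contractions. Both are classical on the Dirichlet-form side, so the real work is to manufacture that Dirichlet form.

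The engine is the \emph{coordinate bilinear form}. Fix $f\in\mathcal{D}$ and $h\in\mathcal{D}^+$, let $\Phi=(f,h)\colon X\to\mathbb{R}^2$, and put $K:=\overline{\Phi(X)}$, a compact subset of $\mathbb{R}^2$ contained in $\{y\ge 0\}$. For $\varphi$ with $\varphi(0,0)=0$ and $\varphi\circ\Phi\in\mathcal{D}$ set $\mathcal{E}^{\Phi}(\varphi,\psi):=\mathcal{E}(\varphi\circ\Phi,\psi\circ\Phi)$. This is exactly the step where the hypothesis of the present theorem is used, and where it diverges from Theorem \ref{T:positive}: in place of $C^1_c(\mathbb{R}^2)$-stability, the algebra, lattice, Stone and normal-contraction properties \emph{together with the bounded-quotient hypothesis} $fg^{-1}\in\mathcal{D}$ furnish a subalgebra of $\mathcal{D}$ which contains cut-off versions of the two coordinate projections and hence separates the points of $K$. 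By the Stone--Weierstrass theorem this subalgebra is, after restriction to $K$, uniformly dense in the natural algebra $\mathcal{C}$ of continuous functions on $K$ (those vanishing at the origin, if $0\in K$), and $\mathcal{C}$ is the domain on which I would consider $\mathcal{E}^{\Phi}$. Normal contractions operate on $\mathcal{E}^{\Phi}$ and it is sup-norm-closable, both inherited from $(\mathcal{E},\mathcal{D})$ via $\|\varphi\circ\Phi\|_{\sup}=\|\varphi\|_{C(K)}$.

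Since $K$ is compact and metrizable, I would invoke Mokobodzki's results \cite{Mo95} to produce a finite Radon measure $m$ on $K$ with respect to which $\mathcal{E}^{\Phi}$ is $L_2$-closable, its closure being a regular Dirichlet form on $L_2(K,m)$. For such a form the energy measure $\mu_{\langle\pi_1\rangle}$ of the first projection $\pi_1|_K\in\mathcal{C}$ exists and is a nonnegative Radon measure, cf. \cite{FOT94, LJ78, S74}. Unwinding (\ref{E:Lagrange1}) through $\Phi$, and using $\pi_1\circ\Phi=f$, $\pi_2\circ\Phi=h$, identifies $L_f(h)$ with the Euclidean Lagrangian $L^{\Phi}_{\pi_1}(\pi_2)=\int_K \pi_2\,d\mu_{\langle\pi_1\rangle}$. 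As $h\ge 0$ forces $\pi_2\ge 0$ on $K$, this integral is nonnegative, which is positivity of $L_f$. The standard mass bound $\mu_{\langle\pi_1\rangle}(K)\le 2\mathcal{E}^{\Phi}(\pi_1)=2\mathcal{E}(f)$ then gives (\ref{E:Lagrangesupbound}) uniformly in $h$, and the contraction $\mu_{\langle F(\pi_1)\rangle}\le\mu_{\langle\pi_1\rangle}$ for a normal contraction $F$, combined with $F(\pi_1)\circ\Phi=F(f)$, gives (\ref{E:LagrangeMarkov}); sup-norm-closability is what legitimizes approximating $\pi_2$ within $\mathcal{C}$ by elements of the dense subalgebra and passing these identities to the limit.

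I expect the analytic heart, and the main obstacle, to be the passage ``sup-norm-closable $\Rightarrow$ regular Dirichlet form on $K$'': verifying the precise hypothesis (sup-norm lower semicontinuity, or closability together with the existence of a representing measure) under which \cite{Mo95} applies to $\mathcal{E}^{\Phi}$, and that the resulting form is genuinely regular, so that the classical positivity and contraction properties of energy measures are available. A secondary, and version-specific, difficulty is purely algebraic: confirming that the bounded-quotient hypothesis really produces a point-separating subalgebra rich enough both for the Stone--Weierstrass step and for the approximation of $\pi_2$, and handling the continuity of $L_f$ and the possible absence of a unit in $\mathcal{D}$ when extracting the norm bound. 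Once the Euclidean Dirichlet form is in hand, the remaining steps are routine.
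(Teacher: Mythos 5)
Your reduction of the two estimates to the positivity of $L_f$ is reasonable, and the idea of studying a coordinate form built from the pair $(f,h)$ is indeed the paper's strategy. But the step you yourself flag as the ``analytic heart'' is a genuine gap, and in this setting it cannot be closed the way you propose. Mokobodzki's Proposition 1 requires \emph{sup-norm lower semicontinuity} of the form (equivalently, an a priori bounded representing measure), whereas the hypothesis here is only \emph{sup-norm-closability}, which is strictly weaker: the implication from closability to lower semicontinuity is established in Section \ref{S:lsc} only under the additional assumption that a finite energy dominant measure exists, and the construction of such a measure rests on the energy measures $\Gamma(f)$, whose existence (Theorem \ref{T:intrepadd}) presupposes that the functionals $L_f$ are positive --- exactly what is to be proved. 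So the route through \cite{Mo95} and the classical energy-measure theory of regular Dirichlet forms is circular as it stands.

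The paper avoids any reference measure altogether. It defines the coordinate form $E^{f,g}$ on a uniformly dense subalgebra of $C_0(I^{f,g}_0)$ over the open rectangle $I^{f,g}$ (not over $\overline{\Phi(X)}$), applies Allain's Beurling--Deny decomposition (\ref{E:BD2}) and Andersson's representation (\ref{E:measurerep}) of the strongly local part by a matrix of Radon measures $\sigma^{f,g}_{ij}$ --- both results live entirely at the level of continuous functions --- and then reads off positivity from the explicit formula $L^{f,g,(c)}_{\pi_1}(\pi_2^+)=\int \pi_2^+(x_2)\,\sigma^{f,g}_{11}(dx)\ge 0$ of Lemma \ref{L:mainstatement}; sup-norm-closability is used only to pass from functions vanishing near the origin to $\pi_1$ itself. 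This is also where the bounded-quotient hypothesis genuinely enters: in the Appendix it is used to build the algebra $\mathcal{V}^{f,g}$ of quotients and Lipschitz transforms inside $\mathcal{D}$, so that Andersson's localization (cut-offs, partitions of unity) and a Bernstein-polynomial approximation can be carried out without $C^1_c(\mathbb{R}^2)$-stability; merely observing that the coordinate projections separate the points of $K$ does not supply this. If you want to salvage your outline, you would have to replace the appeal to \cite{Mo95} by a measure-free representation theorem of Allain--Andersson type on your compact set $K$, at which point you are essentially reconstructing the paper's argument.
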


The remainder of this section is devoted to a proof of Theorem \ref{T:positive}, the auxiliary ingredients needed to verify Theorem \ref{T:positive2} are sketched in the Appendix.

We consider related bilinear forms on Euclidean spaces and employ results of Allain \cite{Allain} and Andersson \cite{And75}. Given $f\in\mathcal{D}$ let $I^f\subset\mathbb{R}$ be a bounded open interval such that $[-\left\|f\right\|_{\sup},\left\|f\right\|_{\sup}]\subset I^f$. Given $f,g\in\mathcal{D}$ we write $I^{f,g}:=I^f\times I^g$ and $I^{f,g}_0:=I^{f,g}\setminus \left\lbrace 0\right\rbrace$ and use the coordinate notation $x=(x_1, x_2)$ for $x\in I^{f,g}$. Let $\mathcal{V}$ denote the algebra of functions generated by the set
\[\left\lbrace F=\Phi(\varphi): \varphi\in C^1_c(\mathbb{R}^2),\ \varphi(0)=0 \text{ and } \Phi\in \lip(\mathbb{R}),\ F(0)=0\right\rbrace\]
and let $C_0(I^{f,g}_0)$ denote the subspace of $C(I^{f,g}_0)$ consisting of all functions that vanish at the boundary of $I_0^{f,g}$ (consisting of the boundary of $I^{f,g}$ and the origin). By restriction and the Stone-Weierstrass theorem $\mathcal{V}$ is a uniformly dense subspace of $C_0(I^{f,g}_0)$. Define a bilinear form $E^{f,g}$ on $\mathcal{V}$ by
\[E^{f,g}(F,G):=\mathcal{E}(F(f,g),G(f,g)),\ \ F,G\in \mathcal{V}.\]
Obviously it is symmetric and nonnegative definite and normal contractions operate.
We use the notation $\diag:=\left\lbrace (x,x): x\in \mathbb{R}^2\right\rbrace$. By \cite[Th\'eor\`{e}me 1]{Allain} there exist a nonnegative Radon measure $\kappa^{f,g}$ on $I_0^{f,g}$, a symmetric  nonnegative Radon measure $J^{f,g}$ on $I^{f,g}_0\times I^{f,g}_0\setminus\diag$ and a bilinear form $N^{f,g}$ such that for any $F\in \mathcal{V}$ we have 
\begin{equation}\label{E:BD2}
E^{f,g}(F)=N^{f,g}(F)+\int\int(F(x)-F(y))^2J^{f,g}(dxdy)+\int F^2d\kappa^{f,g}.
\end{equation}
The form $N^{f,g}$ is strongly local, i.e. $N^{f,g}(F,G)=0$ for any $F,G\in \mathcal{V}$ such that $G$ is constant on a neighborhood of $\supp F$. By \cite[Theorem 2.4]{And75} there exist a uniquely determined symmetric family $\left\lbrace \sigma_{ij}^{f,g}\right\rbrace_{i,j=1,2}$ of Radon measures $\sigma^{f,g}_{ij}$ on $I^{f,g}_0$ such that 
\begin{equation}\label{E:measurerep}
N^{f,g}(F)=\sum_{ij}\int\frac{\partial F}{\partial x_i}\frac{\partial F}{\partial x_j}\:d\sigma_{ij}^{f,g}, \ \ F\in C_c^1(I^{f,g}_0).
\end{equation}
Moreover, for any functions $H_1,H_2\in C_c(I^{f,g}_0)$ the measure $\sum_{ij}H_iH_j\:d\sigma^{f,g}_{ij}$ is nonnegative, c.f. \cite[Theorem 2.4. and Lemma 3.8]{And75}. In particular, $\sigma_{11}^{f,g}$ is a nonnegative Radon measure. 

For any fixed $F\in \mathcal{V}^{f,g}$ a linear functional $L_F^{f,g}$ on $\mathcal{V}$ is defined by
\begin{equation}\label{E:LFf}
L_F^{f,g}(H):=2E^{f,g}(FH,F)-E^{f,g}(F^2,H), \ \ H\in \mathcal{V}.
\end{equation}
Writing $L_F^{f,g,(c)}(H):=N^{f,g}(FH,F)-N^{f,g}(F^2,H)$
to denote the strongly local part of $L_F^{f,g}$ we obtain
\begin{equation}\label{E:intLFf}
L_F^{f,g}(H)=L_F^{f,g,(c)}(H)+\int H(x)\int (F(x)-F(y))^2 J^{f,g}(dxdy)+\int HF^2\:d\kappa^{f,g}.
\end{equation}
The representation (\ref{E:measurerep}) implies that for any $F\in C_c^1(I^{f,g}_0)$ we have 
\[L_F^{f,g,(c)}(H)=2\sum_{ij}\int H\:\frac{\partial F}{\partial x_i}\frac{\partial F}{\partial x_j}\:d\sigma^{f,g}_{ij}, \ H\in C^1_c(I^{f,g}_0),\]
and as a consequence, $L_F^{f,g}$ is seen to be a positive linear functional on $C_c^1(I_0^{f,g})$. In Lemma \ref{L:mainstatement} below we slightly extend this positivity property to certain functions that not necessarily vanish in a neighborhood of zero. As a preparation, we discuss the supports of the representing measures. Let $\varepsilon>0$  be sufficiently small such that  
\begin{equation}\label{E:Keps}
K:=[-\left\|f\right\|_{\sup}-\varepsilon,\left\|f\right\|_{\sup}+\varepsilon]\times [-\left\|g\right\|_{\sup}-\varepsilon,\left\|g\right\|_{\sup}+\varepsilon]\subset I^{f,g}.
\end{equation}

\begin{lemma}\label{L:supports}
For any $\varepsilon>0$ the supports $\supp\:\sigma^{f,g}_{ij}$ and $\supp\:\kappa^{f,g}$ are contained in $K$ and $\supp J^{f,g}$ is contained in $K\times K$. 
\end{lemma}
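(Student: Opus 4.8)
The plan is to exploit that the form $E^{f,g}$ depends on $F\in\mathcal{V}$ only through the function $F(f,g)$ on $X$, and that the range of the map $x\mapsto(f(x),g(x))$ is contained in the rectangle $R:=[-\left\|f\right\|_{\sup},\left\|f\right\|_{\sup}]\times[-\left\|g\right\|_{\sup},\left\|g\right\|_{\sup}]$, which by (\ref{E:Keps}) lies in the interior of $K$. Consider the open set $U:=I_0^{f,g}\setminus K$. The key observation is that any $F\in\mathcal{V}$ with $\supp F\subset U$ vanishes on $R$, so that $F(f,g)\equiv 0$ on $X$ and therefore $E^{f,g}(F)=\mathcal{E}(F(f,g))=0$. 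Since the three summands on the right-hand side of (\ref{E:BD2}) are individually nonnegative, each of them must vanish for every such $F$. I would note in passing that $C^1_c(U)\subset\mathcal{V}$ (extend $F$ by zero to $\varphi\in C^1_c(\mathbb{R}^2)$ with $\varphi(0)=0$ and compose with the identity contraction), so that both (\ref{E:BD2}) and the representation (\ref{E:measurerep}) are available for all test functions used below.

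First I would treat $\kappa^{f,g}$ and the $\sigma_{ij}^{f,g}$. From $\int F^2\,d\kappa^{f,g}=0$ for all $F\in C^1_c(U)$, together with the fact that $F^2>0$ on the open set $\{F\neq 0\}$, one obtains $\kappa^{f,g}(U)=0$ after covering $U$ by such sets. For the strongly local part, $N^{f,g}(F)=0$ together with (\ref{E:measurerep}) gives $\sum_{ij}\int\frac{\partial F}{\partial x_i}\frac{\partial F}{\partial x_j}\,d\sigma_{ij}^{f,g}=0$; since $\sum_{ij}H_iH_j\,d\sigma_{ij}^{f,g}$ is a nonnegative measure whenever $H_i\in C_c(I_0^{f,g})$, the measure $\sum_{ij}\frac{\partial F}{\partial x_i}\frac{\partial F}{\partial x_j}\,d\sigma_{ij}^{f,g}$ is nonnegative with total mass zero, hence the zero measure. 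Testing with $F=\chi x_1$, $F=\chi x_2$ and $F=\chi(x_1+x_2)$ for a cutoff $\chi\in C^1_c(U)$ with $\chi\equiv 1$ on a ball $B\subset U$ then yields $\sigma_{11}^{f,g}(B)=\sigma_{22}^{f,g}(B)=\sigma_{12}^{f,g}(B)=0$; covering $U$ by such balls gives $\sigma_{ij}^{f,g}(U)=0$. Thus $\supp\kappa^{f,g}$ and all $\supp\sigma_{ij}^{f,g}$ lie in $K$.

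The main obstacle is the jump measure $J^{f,g}$, for which the available identity $\int\int(F(x)-F(y))^2\,J^{f,g}(dx\,dy)=0$ is harder to use: the integrand vanishes on the diagonal and wherever $F(x)=F(y)$, so a single bump supported in $U$ cannot by itself detect all the mass carried by pairs whose first coordinate lies in $U$. To get around this I would use \emph{separating} bumps. Fixing $x_0\in U$ and any $y_0\neq x_0$ in $I_0^{f,g}$, I would choose disjoint balls $B_1\ni x_0$ with $B_1\subset U$ and $B_2\ni y_0$ with $B_2\subset I_0^{f,g}$, and an $F\in C^1_c(U)$ with $F\equiv 1$ on $B_1$ and $F\equiv 0$ on $B_2$. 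Then $(F(x)-F(y))^2=1$ on $B_1\times B_2$, so the vanishing of the jump integral forces $J^{f,g}(B_1\times B_2)=0$, i.e. $J^{f,g}$ vanishes near $(x_0,y_0)$. Ranging over all such $(x_0,y_0)$ covers the off-diagonal part of $\{(x,y):x\in U\}$, whence $J^{f,g}$ charges no pair whose first coordinate lies outside $K$; by the symmetry of $J^{f,g}$ the same holds for the second coordinate.

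Finally, since $R\subset K$ and a point of $I_0^{f,g}$ lies outside $K$ precisely when it lies in $U$, the complement of $K\times K$ inside $I^{f,g}_0\times I^{f,g}_0\setminus\diag$ is exactly the union of $\{(x,y):x\in U\}$ and $\{(x,y):y\in U\}$, both of which are $J^{f,g}$-null by the previous paragraph. I therefore conclude $\supp J^{f,g}\subset K\times K$. The only genuinely delicate point in the whole argument is the separating-bump construction for the jump part; the reductions for $\kappa^{f,g}$ and $\sigma_{ij}^{f,g}$ are routine localizations of nonnegative (matrix) measures.
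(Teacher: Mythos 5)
Your proof is correct, but it runs along a genuinely different track from the paper's. The paper works throughout with the \emph{polarized} form $E^{f,g}(F,G)$, which vanishes as soon as one of the two arguments is supported in $I^{f,g}\setminus K$: varying $F$ handles $\kappa^{f,g}$; for $J^{f,g}$ it takes $F,G$ with disjoint supports, for which the strongly local and killing contributions drop out and one is left with $\int\int F(x)G(y)\,J^{f,g}(dxdy)=0$, killing $J^{f,g}$ on $U\times V$ and, with $G$ allowed to sit near $K$, also on $(I^{f,g}\setminus K)\times K$ in one stroke; and for the $\sigma_{ij}^{f,g}$ it invokes Andersson's identity (\ref{E:Anderssontrick}), whose right-hand side vanishes when $\supp F\subset I^{f,g}\setminus K$. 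You instead stay entirely with the quadratic form, observing that the three Beurling--Deny summands are individually nonnegative (for the strongly local part this uses the nonnegativity of $\sum_{ij}H_iH_j\,d\sigma^{f,g}_{ij}$), so that $E^{f,g}(F)=0$ forces each to vanish separately; you then localize $\kappa^{f,g}$ and the $\sigma_{ij}^{f,g}$ by direct testing (the choices $\chi x_1$, $\chi x_2$, $\chi(x_1+x_2)$ replace (\ref{E:Anderssontrick})), and you recover the off-diagonal vanishing of $J^{f,g}$ from positivity of $(F(x)-F(y))^2$ via separating bumps plus a Lindel\"of covering argument. Your observation that $C^1_c(U)\subset\mathcal{V}$ is needed and correct, since $0\in K$ so such test functions vanish at the origin. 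What the paper's route buys is brevity (the mixed region $(I^{f,g}\setminus K)\times K$ is dispatched by the same bilinear identity, with no covering argument); what yours buys is that it relies only on positivity and the quadratic form, at the price of the slightly more delicate separating-bump and covering step that you yourself flag.
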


\begin{proof}
For all $F,G\in C_c^1(I^{f,g}_0)$ with $\supp\:F\subset I^{f,g}\setminus K$ or  $\supp\:G\subset I^{f,g}\setminus K$ we have $E^{f,g}(F,G)=0$. Varying $F$ shows immediately that $\supp\:\kappa^{f,g}\subset K$. If $U$ and $V$ are disjoint open subsets of $I^{f,g}\setminus K$, and if $F$ and $G$ are supported in $U$ and $V$, respectively, then 
\[\int\int F(x)G(y)\:J^{f,g}(dxdy)=0,\]
what implies that $J^{f,g}$ vanishes on $U\times V$. Hence $J^{f,g}$ must vanish on $I^{f,g}\setminus K \times I^{f,g}\setminus K$. We may proceed similarly to show $J^{f,g}$ vanishes on $I^{f,g}\setminus K \times K$. For the strongly local part the statement follows from the straightforward identities
\begin{equation}\label{E:Anderssontrick}
2\int F\:d\sigma^{f,g}_{ij}=N^{f,g}(x_i F, x_j\theta)+N^{f,g}(x_jF, x_i\theta)-N^{f,g}(F, x_ix_j\theta),
\end{equation}
valid for any $F\in C_c^1(I_0^{f,g})$ and any $\theta\in C_c^1(I^{f,g}_0)$ with $\theta\equiv 1$ on a neighborhood of $\supp\:F$, see \cite[p. 24]{And75}. If $\supp\:F\subset I^{f,g}\setminus K$, then the right hand side of (\ref{E:Anderssontrick}) vanishes.
\end{proof}

Now we consider functions $F(x)=F(x_1)$ and $H(x)=H(x_2)$ depending only on the first and second variable, respectively.

\begin{lemma}\label{L:mainstatement}
Let $F\in C^1(I^{f,g})$ be a function with $F(x_1,x_2)=F(x_1)$ such that $F'\geq 0$ and $F(0)=0$. Let $H\in \lip(I_0^{f,g})$ be a function with $H(x_1,x_2)=H(x_2)$ such that $H\geq 0$ and $H(0)=0$. Then we have 
\begin{equation}\label{E:readyforproj}
L^{f,g,(c)}_F(H)=\int H(x_2)F'(x_1)^2 \sigma_{11}^{f,g}(dx)
\end{equation}
and therefore 
\begin{equation}\label{E:LFfpos}
L_F^{f,g}(H)\geq 0.
\end{equation}
\end{lemma}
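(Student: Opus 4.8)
The plan is to read off both claims from the representation of the strongly local part $L_F^{f,g,(c)}$ that was established just above for $C_c^1(I_0^{f,g})$-functions, after extending it to the present $F,H$ by a cut-off near the origin. First note that (\ref{E:LFfpos}) follows at once from (\ref{E:readyforproj}): since $\sigma_{11}^{f,g}$ is a nonnegative Radon measure and $H\geq 0$, the right-hand side of (\ref{E:readyforproj}) is nonnegative, so $L_F^{f,g,(c)}(H)\geq 0$; and by (\ref{E:intLFf}) the two remaining contributions to $L_F^{f,g}(H)$, namely $\int H(x)\int(F(x)-F(y))^2J^{f,g}(dxdy)$ and $\int HF^2\,d\kappa^{f,g}$, are nonnegative for $H\geq 0$ because $J^{f,g},\kappa^{f,g}\geq 0$. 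Thus everything reduces to proving the identity (\ref{E:readyforproj}).

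To this end I would fix a radial cut-off $\eta_\delta\in C_c^1(\mathbb{R}^2)$ with $\eta_\delta\equiv 0$ on $\{|x|\leq\delta\}$, $\eta_\delta\equiv 1$ on $\{|x|\geq 2\delta\}$, $0\leq\eta_\delta\leq 1$ and $|\nabla\eta_\delta|\leq C\delta^{-1}$, and set $F_\delta:=\eta_\delta F$, $H_\delta:=\eta_\delta H$ (a fixed boundary cut-off equal to $1$ on the neighbourhood $K$ of Lemma \ref{L:supports} may be inserted tacitly, since the representing measures do not charge the complement of $K$). Then $F_\delta,H_\delta\in C_c^1(I_0^{f,g})$, the representation above applies, and because $\partial_2F=\partial_1H=0$ it collapses to the $\sigma_{11}^{f,g}$-term plus terms each carrying a factor $\nabla\eta_\delta$. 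These error terms are supported on the annuli $\{\delta\leq|x|\leq 2\delta\}$; using $F(0)=0$ with $F\in C^1$ and $H(0)=0$ with $H\in\lip$ (so that $|F|,|H|\leq C|x|$ there) their integrands stay bounded, while the $\sigma^{f,g}_{ij}$-mass of the annuli tends to $0$ because these finite measures do not charge $\{0\}$; hence the error terms vanish as $\delta\to 0$, and the main term tends to $\int H(x_2)F'(x_1)^2\sigma_{11}^{f,g}(dx)$ by dominated convergence. It is essential that $\eta_\delta$ excises only a disc about the origin and not the entire axis $\{x_1=0\}$, so that $F'$ is not spuriously annihilated there.

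The remaining, and main, difficulty is the convergence of the left-hand sides, $L_{F_\delta}^{f,g,(c)}(H_\delta)\to L_F^{f,g,(c)}(H)$. By the definition of $L^{f,g,(c)}$ and Cauchy--Schwarz for the nonnegative form $N^{f,g}$ it suffices to show that $F_\delta\to F$, $H_\delta\to H$, $F_\delta H_\delta\to FH$ and $F_\delta^2\to F^2$ in $E^{f,g}$-energy; each is of the type $\eta_\delta w\to w$ with $w\in\{F,H,FH,F^2\}$ and $w(0)=0$. The key observation is that the differences $(\eta_\delta-\eta_{\delta'})w$ are supported away from the origin, hence lie in $C_c^1(I_0^{f,g})$, so their energy may be computed from the Beurling--Deny/Andersson representation (\ref{E:BD2})--(\ref{E:measurerep}): as the supporting annuli shrink the local part is dominated by the vanishing annulus-mass of the $\sigma^{f,g}_{ij}$ (again via $w(0)=0$ to bound the $\nabla\eta$-terms), and the jump and killing parts by $\|(\eta_\delta-\eta_{\delta'})w\|_{\sup}^2$, so $(\eta_\delta w)_\delta$ is $E^{f,g}$-Cauchy. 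Since moreover $\eta_\delta w\to w$ uniformly, applying sup-norm-closability of $(\mathcal{E},\mathcal{D})$ to the sequence $(\eta_\delta w-w)(f,g)$ --- which lies in $\mathcal{D}$ by $C_c^1(\mathbb{R}^2)$-stability, is $\mathcal{E}$-Cauchy and tends to $0$ uniformly on $X$ --- forces $\eta_\delta w\to w$ in energy. This yields the left-hand convergence and hence (\ref{E:readyforproj}), which together with the reduction of the first paragraph completes the proof. I expect this last step, namely producing the $E^{f,g}$-Cauchy property by controlling the energy concentrated near the origin through the finiteness of the Allain--Andersson measures and then invoking sup-norm-closability, to be the genuine obstacle.
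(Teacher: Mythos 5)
Your overall architecture is the same as the paper's: deduce (\ref{E:LFfpos}) from (\ref{E:readyforproj}) via (\ref{E:intLFf}), approximate $F$ by elements of $C_c^1(I_0^{f,g})$ where Andersson's representation is available, and use sup-norm-closability to transfer the energy convergence back to $E^{f,g}$. The gap sits exactly where you predicted the obstacle would be, and your proposed resolution does not close it. The measures $\sigma_{ij}^{f,g}$ and $\kappa^{f,g}$ are Radon measures on the \emph{punctured} set $I_0^{f,g}$, and $J^{f,g}$ on $I_0^{f,g}\times I_0^{f,g}\setminus\diag$; Allain--Andersson only gives finiteness on compacta avoiding the puncture (and the diagonal). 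So your three load-bearing assertions all presuppose what the lemma is designed to establish: (a) ``these finite measures do not charge $\{0\}$, hence the annulus masses tend to $0$'' assumes finiteness of $\sigma_{ij}^{f,g}$ on a punctured neighbourhood of the origin, which is not given (and is genuinely unclear for $\sigma_{11}^{f,g}$ on the punctured axis $\{x_1=0\}$, which meets every one of your annuli); (b) ``the main term tends to $\int HF'^2\,d\sigma_{11}^{f,g}$ by dominated convergence'' assumes $F'^2$ is $\sigma_{11}^{f,g}$-integrable up to the origin; (c) bounding the jump part by $\|(\eta_\delta-\eta_{\delta'})w\|_{\sup}^2$ assumes $J^{f,g}$ has finite mass near the diagonal, which it need not. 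There is also a structural problem with the multiplicative radial cut-off: $\eta_\delta F$ is an honestly two-variable function, and the hypothesis ``normal contractions operate'' concerns contractions of one real variable only, so you have no a priori uniform bound on $E^{f,g}(\eta_\delta F)$ (an estimate $\mathcal{E}(\varphi(f,g))\le C(\mathcal{E}(f)+\mathcal{E}(g))$ for two-variable Lipschitz $\varphi$ is not available at this stage); without it even the finiteness of your limiting expressions is open.

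The paper's proof avoids all of this by truncating the \emph{derivative} rather than the function: $F_n(x_1):=\int_0^{x_1}\varphi_n(t)F'(t)\,dt-\int_{x_1}^0\varphi_n(t)F'(t)\,dt$, with $\varphi_n$ a one-dimensional cut-off vanishing on $(-\tfrac1n,\tfrac1n)$. Then $F_n$ is still a function of $x_1$ alone and a multiple of a one-variable normal contraction, so $E^{f,g}(F_n)\le\|F'\|_{\sup}^2\,\mathcal{E}(f)$ uniformly; $F_n'=\varphi_nF'$ and $F_n$ increase monotonically, so monotone convergence \emph{proves} the integrability of $F'^2$, $F^2$ and $(F(x_1)-F(y_1))^2$ against the respective measures instead of assuming it; and there are no $\nabla\eta$-error terms at all --- the error $\widetilde E^{f,g}(F-F_n)$ is dominated by these now-integrable densities and vanishes by dominated convergence, after which sup-norm-closability and (\ref{E:bilinest}) finish the argument exactly as you intended. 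To rescue your route you would first have to prove, e.g.\ by testing $N^{f,g}$ against smoothed one-variable contractions of the form $x_1\mapsto\min(\max(|x_1|-\delta,0),\epsilon)\,\mathrm{sgn}(x_1)$, that the representing measures are finite on punctured neighbourhoods of the origin and that $J^{f,g}$ integrates $|x-y|^2\wedge1$ there; by the time you have done that you will essentially have rebuilt the paper's argument, so the derivative truncation is the missing idea, not a cosmetic variant. (Your insistence on a disc-shaped rather than strip-shaped excision, meant to preserve $F'$ on the axis $\{x_1=0\}$, is also what drags $\sigma_{12}^{f,g}$ and $\sigma_{22}^{f,g}$ into the error terms; the paper's strip cut-off keeps the whole computation inside the single nonnegative measure $\sigma_{11}^{f,g}$.)
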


\begin{proof}
Given a function $F$ as in the lemma we may always assume it has compact support in $I^{f,g}$ (otherwise multiply with a simple cut-off). For any $n$ let $\varphi_n:\mathbb{R}\to [0,1]$ be the continuous function that vanishes in $(-\frac{1}{n},\frac{1}{n})$, equals one outside $(-\frac{2}{n},\frac{2}{n})$ and is linear in between.
Define functions $F_n\in C_c^ 1(I_0^ f)$ by 
\begin{equation}\label{E:Fn}
F_n(x_1):=\int_{0}^{x_1}\varphi_n(t)F'(t)dt-\int_{x_1}^{0}\varphi_n(t)F'(t)dt.
\end{equation}
Then $\lim_n F_n'=F'$ monotonically and $\lim_n F_n=F$ monotonically. By monotone convergence therefore
\[\lim_n \int F_n'^2 d\sigma_{11}^{f,g}=\int F'^2 d\sigma_{11}^{f,g}\ \ \text{ and }\ \ \lim_n \int F_n^2 d\kappa{f,g}=\int F^2 d\kappa^{f,g}.\]
Similarly, using the symmetry of $J^{f,g}$, 
\begin{multline}
\lim_n \int\int (F_n(x_1)-F_n(y_1))^2J^ {f,g}(dxdy)=2\lim_n\int\int_{\left\lbrace x_1<y_1\right\rbrace}\left(\int_{x_1}^{y_1} F_n'(t)dt\right)^2J^{f,g}(dxdy)\notag\\
=\int\int(F(x_1)-F(y_1))^2J^{f,g}(dxdy).
\end{multline}
Now let $\widetilde{E}^{f,g}(F)$ denote the right hand side of (\ref{E:BD2}) with $F$ as in the lemma. The bound $\sup_n \left\|F_n'\right\|_{\sup}\leq \left\|F'\right\|_{\sup}$  implies 
$\widetilde{E}^{f,g}(F)\leq \sup_nE^{f,g}(F_n)\leq\left\|F'\right\|_{\sup}\mathcal{E}(f)<+\infty$. Therefore 
\begin{multline}
\widetilde{E}^{f,g}(F-F_n)=2\int(1-\varphi_n)^2F'^2d\sigma_{11}^{f,g}\notag\\
+2\int\int_{\left\lbrace x_1<y_1\right\rbrace}\left(\int_{x_1}^{y_1}(1-\varphi_n(t))F'(t) dt\right)^2J^f(dxdy)
+\int(F-F_n)^2d\kappa^f
\end{multline}
converges to zero by dominated convergence. In particular, $(F_n)_n$ is $E^{f,g}$-Cauchy. On the other hand $\lim_n F_n=F$ uniformly by bounded convergence. Consequently the sup-norm-closability of $(\mathcal{E},\mathcal{D})$ implies $\lim_n\mathcal{E}(F-F_n)=0$. Given $H\in C_c^1(I_0^{f,g})$ we have in particular $\lim_n N^{f,g}((F-F_n)^2,H)=0$ by contractivity and Cauchy-Schwarz. If in addition $H(x_1,x_2)=H(x_2)$ then, since $FH\in C_c^1(I_0^{f,g})$, formula 
(\ref{E:measurerep}) yields
\begin{multline}
N^{f,g}(FH-F_nH)
=\int(1-\varphi_n(x_1))^2 F'^2(x_1)H^2(x_2)\:d\sigma^{f,g}_{11}\notag\\
+2\int(1-\varphi_n(x_1))F'(x_1)H(x_2)(F(x_1)-F_n(x_1))H'(x_2)\:d\sigma_{12}^{f,g}\notag\\
+\int(F(x_1)-F_n(x_1))^2 H'^2(x_2)\:d\sigma_{22}^{f,g},
\end{multline}
what converges to zero by bounded convergence. Therefore $\lim_n N^{f,g}((F-F_n)H, F-F_n)=0$ and consequently $\lim_n L^{f,g,(c)}_{F-F_n}(H)=0$. An estimate analogous to (\ref{E:bilinest}) and monotone convergence now show
\[L_F^{f,g,(c)}(H)=\lim_n L_{F_n}^{f,g,(c)}(H)=\lim_n \int H(x_2)F_n'^2(x_1)\sigma_{11}^{f,g}(dx)=\int H(x_2)F'^2(x_1)\sigma_{11}^{f,g}.\]
Together with (\ref{E:intLFf}) we obtain (\ref{E:LFfpos}). For general nonnegative $H\in Lip(I^f)$ with $H(0)=0$ we can obtain (\ref{E:readyforproj}) and (\ref{E:LFfpos}) using monotone convergence and (\ref{E:intLFf}). 
\end{proof}

We prove Theorem \ref{T:positive}.
\begin{proof}
Consider the projections $\pi_i:\mathbb{R}^2\to \mathbb{R}$, $\pi_i(x_1,x_2)=x_i$, $i=1,2$, and apply Lemma \ref{L:mainstatement} with $\pi_1$ in place of $F$ and $\pi_2^+:=\pi_2\vee 0$ in place of $H$. If $f\in\mathcal{D}$ and $g\in\mathcal{D}^+$ then 
\[L_f(g)=2\mathcal{E}(fg,f)-\mathcal{E}(f^2,g)=2E^{f,g}(\pi_1\pi_2^+,\pi_1)-E^{f,g}(\pi_1^2,\pi_2^+)=L^{f,g}_{\pi_1}(\pi_2^+)\geq 0.\]
Varying $\varepsilon>0$ in (\ref{E:Keps}) we can also see that for any $f,g\in \mathcal{D}$, 
\[|L_f(g)|=|L^{f,g}_{\pi_1}(\pi_2)|\leq 2\left\|g\right\|_{\sup}\:E^{f,g}(\pi_1)=2\left\|g\right\|_{\sup}\:\mathcal{E}(f),\]
i.e. (\ref{E:Lagrangesupbound}). For $F\in C^1(\mathbb{R})$ with $F(0)=0$ and $\left\|F'\right\|_{\sup}\leq 1$ the contraction property (\ref{E:LagrangeMarkov}) is a direct consequence of (\ref{E:intLFf}) and (\ref{E:readyforproj}). Given a general normal contraction $F$, we have $|F'|\leq 1$ a.e. on $\mathbb{R}$. If $(F_n)_n$ is a sequence of functions defined as in (\ref{E:Fn}) then similarly as before $\lim_n F_n(f)=F(f)$ uniformly on $X$ and $(F_n(f))_n$ is $\mathcal{E}$-Cauchy. Sup-norm-closability and (\ref{E:Lagrangesupbound}) imply that for any $g\in\mathcal{D}$ we have $\lim_n L_{F_n(f)-F(f)}(g)=0$ and by (\ref{E:bilinest}), $L_{F(f)}(g)=\lim_n L_{F_n(f)}(g)$. This implies (\ref{E:LagrangeMarkov}).
\end{proof}

\section{Bilinear forms with zero killing}\label{S:zerokill}

In this section we discuss bilinear forms with zero killing and record some consequences for the linear functionals (\ref{E:Lagrange1}). To define the killing functional it suffices to consider lattice properties and the unit contraction, to deduce some statements for bilinear forms we will additionally assume to deal with an algebra.

Let $\mathcal{D}$ be a vector lattice (with respect to the natural pointwise order) of bounded real valued functions endowed with the supremum norm and again let $\mathcal{D}^+$ denote its nonnegative elements. We assume that $\mathcal{D}$ has the Stone property and therefore is stable under the unit contraction $T_1$.

Set $\mathcal{D}^+_0:=\left\lbrace f\in \mathcal{D}^+: E_f\neq \emptyset\right\rbrace$,
where for a given function $f\in\mathcal{D}^+$ 
\begin{equation}\label{E:Ef}
E_f:=\left\lbrace \varphi\in\mathcal{D}: \mathbf{1}_{\left\lbrace f>0\right\rbrace}\leq \varphi\leq \mathbf{1}\right\rbrace.
\end{equation}
It is not difficult to see that $\mathcal{D}^+_0$ is an ideal of the lattice cone $\mathcal{D}^+$. Since $E_{T_1(f)}=E_f$ for any $f\in \mathcal{D}^+$, the unit contraction operates also on $\mathcal{D}^+_0$. Set $\mathcal{D}_0:=\lin\left(\mathcal{D}^+_0\right)$. Clearly $\mathcal{D}_0$ is a subspace of $\mathcal{D}$, and if $\mathbf{1}\in\mathcal{D}$ then $\mathcal{D}_0=\mathcal{D}$. Our notation is consistent in the sense that $\mathcal{D}_0^+$ is exactly the cone of nonnegative elements of $\mathcal{D}_0$, as can be seen using the lattice structure that $\mathcal{D}_0$ inherits from $\mathcal{D}$.

We use a standard decomposition for functions from \cite[p. 6]{Allain}. Recall the definition of the contractions $T_\alpha$, $\alpha\in\mathbb{R}$, from Section \ref{S:main}. Given $f\in\mathcal{D}$, let $N$ be the smallest integer greater than $\left\|f\right\|_{\sup}$. For $n=1,2,...$ set 
\begin{align}\label{E:tranches}
f_{k,n}&:=T_{\frac{k+1}{2^n}}(f) - T_{\frac{k}{2^n}}(f),\ \ k=0,1,\dots, 2^n N-1,\text{ and }\\
f_{k,n}&:=T_{\frac{k}{2^n}}(f) - T_{\frac{k+1}{2^n}}(f),\ \ k=-2^n N,\dots, -1. \notag
\end{align}
Then $f=\sum_{k=-2^nN}^{2^nN-1} f_{k,n}$ and $\left\|f_{k,n}\right\|_{\sup}\leq 2^{-n}$. The following is an immediate consequence.

\begin{lemma}\label{L:D0dense}
The vector space $\mathcal{D}_0$ is uniformly dense in $\mathcal{D}$.
\end{lemma}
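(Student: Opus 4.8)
The plan is to exploit the tranche decomposition $f=\sum_{k=-2^nN}^{2^nN-1}f_{k,n}$ already recorded, showing that every tranche \emph{except} the two innermost ones ($k=0$ and $k=-1$) lies in $\mathcal{D}_0$, while those two exceptional tranches are uniformly small and can simply be discarded. First I would identify the positivity sets of the tranches. For $k\geq 0$ the tranche $f_{k,n}=T_{(k+1)/2^n}(f)-T_{k/2^n}(f)$ is nonnegative, vanishes on $\{f\leq k/2^n\}$ and equals its maximal value $2^{-n}$ on $\{f\geq (k+1)/2^n\}$, so that $\{f_{k,n}>0\}=\{f>k/2^n\}$. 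Symmetrically, for $k\leq -1$ one has $-f_{k,n}\in\mathcal{D}^+$ with $\{-f_{k,n}>0\}=\{f<(k+1)/2^n\}$ and $-f_{k,n}=2^{-n}$ on $\{f\leq k/2^n\}$.

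The key step is to exhibit a witness in $E_{f_{k,n}}$ from the neighbouring tranche. For $k\geq 1$ I set $\varphi:=2^nf_{k-1,n}$. Because $\mathcal{D}$ is a vector lattice stable under every $T_\alpha$, this $\varphi$ lies in $\mathcal{D}$, and $0\leq\varphi\leq\mathbf{1}$; moreover $f_{k-1,n}$ saturates at $2^{-n}$ on $\{f\geq k/2^n\}$, so $\varphi=\mathbf{1}$ there. Since $\{f_{k,n}>0\}=\{f>k/2^n\}\subseteq\{f\geq k/2^n\}$, we obtain $\mathbf{1}_{\{f_{k,n}>0\}}\leq\varphi\leq\mathbf{1}$, i.e. $\varphi\in E_{f_{k,n}}$ and $f_{k,n}\in\mathcal{D}^+_0$. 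Applying the same reasoning to $-f_{k,n}$ with the witness $-2^nf_{k+1,n}$ gives $-f_{k,n}\in\mathcal{D}^+_0$, hence $f_{k,n}\in\mathcal{D}_0$, for every $k\leq -2$.

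To conclude I would discard the two innermost tranches. Put $g_n:=\sum_{k=1}^{2^nN-1}f_{k,n}+\sum_{k=-2^nN}^{-2}f_{k,n}$. By the previous step $g_n\in\mathcal{D}_0$, whereas $f-g_n=f_{0,n}+f_{-1,n}$ satisfies $\|f-g_n\|_{\sup}\leq 2^{1-n}$ by the uniform bound $\|f_{k,n}\|_{\sup}\leq 2^{-n}$. Letting $n\to\infty$ yields $g_n\to f$ uniformly, which is the asserted density.

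I expect the only point requiring real care to be the level-set bookkeeping in the second step: verifying that the scaled adjacent tranche $2^nf_{k-1,n}$ is saturated at the value $1$ on the entire support $\{f_{k,n}>0\}$. This hinges on the inclusion $\{f>k/2^n\}\subseteq\{f\geq k/2^n\}$, the latter being exactly the region where the lower tranche attains its maximum $2^{-n}$. The innermost tranches $k=0,-1$ fail precisely here, since their supports abut $\{f>0\}$ (respectively $\{f<0\}$), for which $\mathcal{D}$ supplies no cut-off function; this is why they cannot in general be represented in $\mathcal{D}_0$ and must instead be absorbed into the error term.
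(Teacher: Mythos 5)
Your proof is correct and follows essentially the same route as the paper: the tranche decomposition with the scaled adjacent tranche $2^nf_{k-1,n}$ serving as the witness in $E_{f_{k,n}}$, and the innermost tranches absorbed into a uniformly small error. The only (immaterial) difference is that the paper first reduces to $f\in\mathcal{D}^+$ via the lattice structure, whereas you treat the negative tranches symmetrically by hand.
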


\begin{proof}
Let $f\in\mathcal{D}$, we may assume $f\in\mathcal{D}^+$. For any $n$ the bound $f-\sum_{k=1}^ {2^nN-1} f_{k,n}=f_{0,n}\leq 2^{-n}$ holds, and for any $k\geq 1$ we have $f_{k,n}\in \mathcal{D}_0$ because $\mathbf{1}_{\left\lbrace f_{k,n}>0\right\rbrace}\leq 2^n f_{k-1,n}\leq \mathbf{1}$.
\end{proof}

\begin{remark}\label{R:unitcontract}
If the unit contraction operates on $(\mathcal{E},\mathcal{D})$, then the bilinearity of $\mathcal{E}$ implies the following facts, cf. \cite[p. 2]{Allain}:
\begin{enumerate}
\item[(i)] For $f\in\mathcal{D}^+$ and $g\in E_f$ we have $\mathcal{E}(f,g)\geq 0$. 
\item[(ii)] For $f,g\in \mathcal{D}^+$ with $f\wedge g=0$ we have $\mathcal{E}(f,g)\leq 0$. 
\end{enumerate}
\end{remark}

Clearly any bilinear form $(\mathcal{E},\mathcal{D})$ induces a bilinear form $(\mathcal{E},\mathcal{D}_0)$ by restriction of $\mathcal{E}$ to $\mathcal{D}_0\times\mathcal{D}_0$, and if the unit contraction operates on $(\mathcal{E},\mathcal{D})$ then also on $(\mathcal{E},\mathcal{D}_0)$.


Let $(\mathcal{E},\mathcal{D})$ be a bilinear form on which the unit contraction operates. For $f\in \mathcal{D}^+_0$ set
\begin{equation}\label{E:killing}
K(f):=\inf\left\lbrace \mathcal{E}(f,\varphi): \varphi \in E_f\right\rbrace.
\end{equation}

For simplicity we refer to an additive and positively homogeneous functional on a cone as a \emph{linear functional}, and we call it \emph{positive} if it takes values in $[0,+\infty)$.

\begin{lemma}\label{L:killing}
Formula (\ref{E:killing}) defines a positive linear functional $K$ on $\mathcal{D}^+_0$.
\end{lemma}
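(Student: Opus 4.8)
The plan is to verify the three defining properties of a positive linear functional on the cone $\mathcal{D}^+_0$: finiteness together with positivity, positive homogeneity, and additivity. Fix $f\in\mathcal{D}^+_0$, so that $E_f\neq\emptyset$. For any $\varphi\in E_f$ we have $f\in\mathcal{D}^+$ and $\mathbf{1}_{\{f>0\}}\leq\varphi\leq\mathbf{1}$, so Remark \ref{R:unitcontract}(i) gives $\mathcal{E}(f,\varphi)\geq 0$. Hence the infimum defining $K(f)$ is taken over a nonempty set of nonnegative numbers and is bounded above by $\mathcal{E}(f,\varphi)$ for any fixed $\varphi\in E_f$; therefore $K(f)\in[0,+\infty)$, which settles both finiteness and positivity.

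Positive homogeneity is immediate: for $\lambda>0$ one has $\{\lambda f>0\}=\{f>0\}$, hence $E_{\lambda f}=E_f$, and by linearity of $\mathcal{E}$ in the first argument $K(\lambda f)=\inf_{\varphi\in E_f}\lambda\,\mathcal{E}(f,\varphi)=\lambda K(f)$; the case $\lambda=0$ reduces to $K(0)=0$, since $0\in E_0$ and $\mathcal{E}(0,\cdot)=0$.

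The substantial point is additivity, $K(f+g)=K(f)+K(g)$ for $f,g\in\mathcal{D}^+_0$. For the inequality ``$\geq$'' I would use that any $\chi\in E_{f+g}$ already lies in both $E_f$ and $E_g$: since $f,g\geq 0$ we have $\{f>0\}\subset\{f+g>0\}$, whence $\mathbf{1}_{\{f>0\}}\leq\mathbf{1}_{\{f+g>0\}}\leq\chi\leq\mathbf{1}$ and thus $\chi\in E_f$, and symmetrically $\chi\in E_g$. Then $\mathcal{E}(f+g,\chi)=\mathcal{E}(f,\chi)+\mathcal{E}(g,\chi)\geq K(f)+K(g)$, and taking the infimum over $\chi\in E_{f+g}$ gives $K(f+g)\geq K(f)+K(g)$.

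For the reverse inequality ``$\leq$'', which I expect to be the main obstacle, I would fix arbitrary $\varphi\in E_f$ and $\psi\in E_g$ and test with $\varphi\vee\psi$, which lies in $E_{f+g}$ by the lattice property (this simultaneously exhibits $E_{f+g}\neq\emptyset$, so $f+g\in\mathcal{D}^+_0$). Writing $\varphi\vee\psi=\varphi+(\psi-\varphi)^+$ and noting that on $\{f>0\}$ the two-sided constraint forces $\varphi=\mathbf{1}$, so that $(\psi-\varphi)^+=0$ there, I obtain $f\wedge(\psi-\varphi)^+=0$; hence Remark \ref{R:unitcontract}(ii) yields $\mathcal{E}(f,(\psi-\varphi)^+)\leq 0$, that is $\mathcal{E}(f,\varphi\vee\psi)\leq\mathcal{E}(f,\varphi)$. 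By the symmetric computation using $\varphi\vee\psi=\psi+(\varphi-\psi)^+$ and $g\wedge(\varphi-\psi)^+=0$ I get $\mathcal{E}(g,\varphi\vee\psi)\leq\mathcal{E}(g,\psi)$. Adding these, $K(f+g)\leq\mathcal{E}(f+g,\varphi\vee\psi)\leq\mathcal{E}(f,\varphi)+\mathcal{E}(g,\psi)$, and taking the infimum over $\varphi\in E_f$ and $\psi\in E_g$ gives $K(f+g)\leq K(f)+K(g)$. The delicate step is the orthogonality argument $f\wedge(\psi-\varphi)^+=0$, which hinges on the exact lower bound in the definition of $E_f$ forcing $\varphi\equiv\mathbf{1}$ on $\{f>0\}$; everything else is bilinearity and monotonicity of the infimum.
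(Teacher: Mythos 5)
Your proof is correct and follows essentially the same route as the paper: the superadditivity via $E_{f+g}\subset E_f\cap E_g$, and the subadditivity via testing with $\varphi\vee\psi\in E_{f+g}$ and observing that $\mathcal{E}(f,\varphi\vee\psi-\varphi)\le 0$ by Remark \ref{R:unitcontract}(ii), since $\varphi\vee\psi-\varphi=(\psi-\varphi)^+$ vanishes on $\{f>0\}$ — which is exactly the paper's argument. The only (harmless) additions are your explicit verifications of homogeneity and finiteness, which the paper dismisses as obvious.
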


\begin{proof}
Obviously the functional $K$ is positively homogeneous. To verify its additivity, let $f,g\in \mathcal{D}^+_0$. Given $\varepsilon>0$ we can find
$\varphi\in E_{f+g}$ such that 
\[K(f+g)+\varepsilon>\mathcal{E}(f+g,\varphi)=\mathcal{E}(f,\varphi)+\mathcal{E}(g,\varphi).\]
Since $\mathbf{1}_{\left\lbrace f>0\right\rbrace}\vee\mathbf{1}_{\left\lbrace g>0\right\rbrace}=\mathbf{1}_{\left\lbrace f+g>0\right\rbrace}\leq \varphi\leq \mathbf{1}$ we have $\varphi\in E_f\cap E_g$, and therefore
\[K(f+g)+\varepsilon\geq K(f)+K(g).\]
Now we may let $\varepsilon$ tend to zero. If, on the other hand, $\varphi_1\in E_f$ and $\varphi_2\in E_g$, then $\mathbf{1}_{\left\lbrace f+g>0\right\rbrace}\leq \varphi_1\vee\varphi_2\leq \mathbf{1}$, hence $\varphi_1\vee \varphi_2\in E_{f+g}$
and therefore
\[K(f+g)\leq \mathcal{E}(f+g,\varphi_1\vee\varphi_2)=\mathcal{E}(f,\varphi_1\vee\varphi_2)+\mathcal{E}(g,\varphi_1\vee\varphi_2)\leq\mathcal{E}(f,\varphi_1)+\mathcal{E}(g,\varphi_2),\]
note that $\mathcal{E}(f,\varphi_1\vee \varphi_2-\varphi_1)\leq 0$ due to Remark (\ref{R:unitcontract}) (ii), because $(\varphi_1\vee \varphi_2)(x)-\varphi_1(x)=0$ for any $x\in X$ with $f(x)>0$. Similarly for the other summand. Taking infima yields
\[K(f+g)\leq K(f)+K(g).\]
Therefore $K$ is additive. By Remark (\ref{R:unitcontract}) (i) the linear functional $K$ is positive.
\end{proof}

For general $f\in\mathcal{D}_0$ let $K$ be defined by $K(f):=K(f_+)-K(f_-)$. We refer to $K$ as the \emph{killing functional} of $(\mathcal{E},\mathcal{D})$ and say that 
$(\mathcal{E},\mathcal{D}_0)$ has \emph{zero killing} if $K(f)=0$ for all $f\in\mathcal{D}_0$.
Note that if $\mathbf{1}\in\mathcal{D}$ then $K(f)$ is defined for all $f\in\mathcal{D}$ and $(\mathcal{E},\mathcal{D})$ has zero killing if and only if $\mathcal{E}(\mathbf{1})=0$.
 
We mostly work under the additional assumption that $\mathcal{D}$ is an algebra. Recall that this is the case if and only if $f\in\mathcal{D}$ implies $f^2\in\mathcal{D}^+$. It is easy to see that if $\mathcal{D}$ is an algebra, then the space $\mathcal{D}_0$ is an ideal of  $\mathcal{D}$, and in particular, $f^2\in\mathcal{D}^+_0$ for any $f\in\mathcal{D}_0$.

\begin{proposition}\label{P:splitoffkill}
Let $\mathcal{D}$ be an algebra and $(\mathcal{E},\mathcal{D})$ a bilinear form on which the unit contraction operates. Let $K$ be its killing functional. Then 
\begin{equation}\label{E:dominateK}
K(f^2)\leq \mathcal{E}(f),\ \ f\in\mathcal{D}_0,
\end{equation}
and $\mathcal{Q}(f,g):=\mathcal{E}(f,g)-K(fg)$, $f,g\in\mathcal{D}_0$,
defines a bilinear form $(\mathcal{Q},\mathcal{D}_0)$ on which the unit contraction operates and with zero killing.
\end{proposition}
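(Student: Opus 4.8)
The plan is to treat the two assertions separately, noting that the domination inequality (\ref{E:dominateK})---equivalently the nonnegativity of $\mathcal{Q}$---carries essentially all of the difficulty, whereas symmetry and bilinearity of $\mathcal{Q}$ are immediate: $\mathcal{D}_0$ is an ideal of the algebra $\mathcal{D}$ so $fg\in\mathcal{D}_0$, the product is commutative, and $K$ is linear on $\mathcal{D}_0$ by Lemma \ref{L:killing} and its extension. First I would reduce (\ref{E:dominateK}) to the case $f\in\mathcal{D}_0^+$. Indeed $f^2=(f^+)^2+(f^-)^2$ with $f^\pm\in\mathcal{D}_0^+$, and $K$ is linear, while $f^+\wedge f^-=0$ together with Remark \ref{R:unitcontract}(ii) gives $\mathcal{E}(f^+,f^-)\le 0$ and hence $\mathcal{E}(f)\ge\mathcal{E}(f^+)+\mathcal{E}(f^-)$; so it suffices to bound $K((f^\pm)^2)$ by $\mathcal{E}(f^\pm)$.

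For $f\in\mathcal{D}_0^+$ the heart of the argument is a telescoping over the tranches (\ref{E:tranches}). Writing $v_k:=f\wedge(k2^{-n})$ and $q_k:=v_{k+1}+v_k$, the identity $\mathcal{E}(v_{k+1})-\mathcal{E}(v_k)=\mathcal{E}(v_{k+1}-v_k,v_{k+1}+v_k)$ gives $\mathcal{E}(f)=\sum_k\mathcal{E}(f_{k,n},q_k)$, and linearity of $K$ with $f_{k,n}q_k=v_{k+1}^2-v_k^2$ gives $K(f^2)=\sum_k K(f_{k,n}q_k)$. Since $v_k$ equals the constant $k2^{-n}$ on $\{f_{k,n}>0\}$, one has $v_kf_{k,n}=(k2^{-n})f_{k,n}$, so both sums split into a $v_k$-part and an $f_{k,n}^2$-part. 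The key observation is that $v_k/(k2^{-n})\in E_{f_{k,n}}$, whence $(k2^{-n})K(f_{k,n})\le\mathcal{E}(f_{k,n},v_k)$ directly from the definition of $K$; combining the two telescopings with this estimate yields the superadditivity $\mathcal{Q}(f)\ge\sum_k\mathcal{Q}(f_{k,n})$. Finally, the elementary product bound $K(h^2)\le\left\|h\right\|_{\sup}K(h)$ (proved by writing $\left\|h\right\|_{\sup}h-h^2\ge 0$ and applying Remark \ref{R:unitcontract}(i)) gives $K(f_{k,n}^2)\le 2^{-n}K(f_{k,n})$, and summing with $\sum_k K(f_{k,n})=K(f)$ gives $\sum_k\mathcal{Q}(f_{k,n})\ge -2^{-n}K(f)$. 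Letting $n\to\infty$ proves $\mathcal{Q}(f)\ge 0$. I expect this telescoping estimate to be the main obstacle.

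Granting (\ref{E:dominateK}), $\mathcal{Q}$ is a bilinear form. To see that the unit contraction operates, set $u:=T_1(f)$ and $w:=f-u$ (both in $\mathcal{D}_0$) and expand $\mathcal{Q}(f)-\mathcal{Q}(u)=2\mathcal{Q}(u,w)+\mathcal{Q}(w)$. Here $\mathcal{Q}(w)\ge 0$ by (\ref{E:dominateK}), so it remains to show $\mathcal{Q}(u,w)\ge 0$. A pointwise computation gives $uw=(f-1)^+$ and $w=(f-1)^+-f^-$; since $u\in E_{(f-1)^+}$ one gets $\mathcal{E}(u,(f-1)^+)\ge K((f-1)^+)$ from the definition of $K$, while $u\wedge f^-=0$ gives $\mathcal{E}(u,f^-)\le 0$ by Remark \ref{R:unitcontract}(ii). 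Combining, $\mathcal{Q}(u,w)=[\mathcal{E}(u,(f-1)^+)-K((f-1)^+)]-\mathcal{E}(u,f^-)\ge 0$, so $\mathcal{Q}(f)\ge\mathcal{Q}(T_1(f))$.

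It remains to check that $(\mathcal{Q},\mathcal{D}_0)$ has zero killing. For $h\in\mathcal{D}_0^+$ and $\varphi\in E_h$ one has $h\varphi=h$, hence $K(h\varphi)=K(h)$ is independent of $\varphi$; therefore the killing functional of $\mathcal{Q}$ satisfies $K_{\mathcal{Q}}(h)=\inf_{\varphi\in E_h}(\mathcal{E}(h,\varphi)-K(h))=K(h)-K(h)=0$, and the general case follows by linearity.
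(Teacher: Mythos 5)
Your proof is correct and follows essentially the same route as the paper's: both rest on the dyadic tranche decomposition (\ref{E:tranches}) together with the observation that suitably rescaled lower tranches lie in the $E$-sets of higher ones, and your contraction and zero-killing arguments coincide with the paper's. Your single telescoping over the partial sums $v_k=f\wedge(k2^{-n})$ is a tidier repackaging of the paper's double sum $\sum_k\sum_l\mathcal{E}(f_{k,n},f_{l,n})$, and your splitting $w=(f-1)^+-f^-$ makes the step $\mathcal{E}(T_1(f),w)\geq K(T_1(f)w)$ more careful than the paper's appeal to ``$T_1(f)\in E_{f-T_1(f)}$'', which is imprecise when $f$ takes negative values.
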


\begin{proof}
To verify (\ref{E:dominateK}) we use again the decomposition (\ref{E:tranches}). Note that $\left\lbrace |f_{k+1,n}|> 0\right\rbrace\subset \left\lbrace 2^n|f|\geq k+1\right\rbrace\subset\left\lbrace 2^n|f_{k,n}|=1\right\rbrace$,
hence $2^n |f_{k,n}|\in E_{|f_{k+1,n}|}$ for all $n$ and $k$. By the definition of $K$ therefore
\begin{align}
\mathcal{E}(f)=\sum_k\sum_l \mathcal{E}(f_{k,n},f_{l,n})
&=\sum_k\left(\sum_{l\leq k-1}\mathcal{E}(f_{k,n},f_{l,n})+\sum_{l\geq k+1}\mathcal{E}(f_{k,n},f_{l,n})\right)+\sum_k\mathcal{E}(f_{k,n})\notag\\
&\geq \sum_k\left(\sum_{l\leq k-1} 2^{-n} K(f_{l,n})+\sum_{l\geq k+1} 2^{-n}K(f_{k,n})\right)\notag\\
&=\sum_k\sum_{k\neq l} K(f_{k,n}f_{l,n}).\notag
\end{align}
The last expression tends to $K(f^2)$ as $n$ goes to infinity. This from the uniform bound
\[f^2-\sum_k\sum_{l\neq k} f_{k,n}f_{l,n}=\sum_{k=-2^nN}^{2^nN-1} f_{k,n}^2\leq 2^{-n+1}N,\]
and together with the positivity of $K$, this yields
\[K(f^2-\sum_k\sum_{l\neq k} f_{k,n}f_{l,n})=K(\varphi(f^2-\sum_k\sum_{l\neq k} f_{k,n}f_{l,n}))\leq 2^{-n+1}N K(\varphi)\]
for arbitrary $\varphi\in E_{f^2}$. Clearly $(\mathcal{Q},\mathcal{D}_0)$ is symmetric and bilinear, and by (\ref{E:dominateK}) it is nonnegative definite. To see that the unit contraction operates, note that by the definition of $K$ we have
\[\mathcal{E}(f-T_1(f), T_1(f))\geq K((f-T_1(f))T_1(f)),\]
recall that $T_1(f)\in E_{f-T_1(f)}$. Together with (\ref{E:dominateK}) this yields
\begin{align}
\mathcal{E}(f)-\mathcal{E}(T_1(f))=\mathcal{E}(f-T_1(f),f+T_1(f))
&=\mathcal{E}(f-T_1(f),f-T_1(f)) + 2\mathcal{E}(f-T_1(f), T_1(f))\notag\\
&\geq K((f-T_1(f))^2)+2K((f-T_1(f))T_1(f))\notag\\
&=K((f-T_1(f))(f+T_1(f)))\notag\\
&=K(f^2)-K(T_1(f)^2),\notag
\end{align}
which is $\mathcal{Q}(T_1(f))\leq \mathcal{Q}(f)$. To see that $\mathcal{Q}$ has zero killing let $f\in \mathcal{D}_0^+$ and let $\varepsilon>0$ be arbitrary. Then by Remark \ref{R:unitcontract} (i) and the definition of $K$ there exists some $\varphi\in E_f$ such that $0\leq\mathcal{Q}(f,\varphi)=\mathcal{E}(f,\varphi)-K(f)<\varepsilon$.
\end{proof}

In the case of zero killing we can use lattice properties to recover $\mathcal{E}$ from the functionals $L_f$, $f\in\mathcal{D}$, as defined in $(\ref{E:Lagrange1})$. The next lemma does not need the $L_f$'s to be positive.

\begin{lemma}
Let $\mathcal{D}$ be an algebra and $(\mathcal{E},\mathcal{D})$ be a bilinear form such that $(\mathcal{E},\mathcal{D}_0)$ has zero killing. Then we have 
\begin{equation}\label{E:suprep}
\mathcal{E}(f)=\frac12\sup\left\lbrace L_f(\varphi): \varphi\in E_{f^2}\right\rbrace
\end{equation}
for any $f\in\mathcal{D}_0$.
\end{lemma}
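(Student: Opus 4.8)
The plan is to evaluate $L_f(\varphi)$ explicitly for each competitor $\varphi\in E_{f^2}$, thereby reducing the supremum in (\ref{E:suprep}) to the killing functional $K(f^2)$, and then to invoke the zero-killing hypothesis. First I would fix $f\in\mathcal{D}_0$ and note that, since $\mathcal{D}$ is an algebra, $f^2\in\mathcal{D}_0^+$, so that $E_{f^2}\neq\emptyset$ and the supremum is taken over a nonempty set.

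The crux of the argument is an elementary pointwise identity. For any $\varphi\in E_{f^2}$ I would use that $\left\lbrace f^2>0\right\rbrace=\left\lbrace f\neq 0\right\rbrace$ together with the defining inequalities $\mathbf{1}_{\left\lbrace f^2>0\right\rbrace}\leq\varphi\leq\mathbf{1}$: on $\left\lbrace f\neq 0\right\rbrace$ these force $\varphi=1$, while on $\left\lbrace f=0\right\rbrace$ one has $f\varphi=0=f$ trivially. Hence $f\varphi=f$ as elements of $\mathcal{D}$. Substituting this into the definition (\ref{E:Lagrange1}) gives
\[
L_f(\varphi)=2\mathcal{E}(f\varphi,f)-\mathcal{E}(f^2,\varphi)=2\mathcal{E}(f)-\mathcal{E}(f^2,\varphi),
\]
valid for every $\varphi\in E_{f^2}$.

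From here the computation is immediate. Taking the supremum over $\varphi\in E_{f^2}$ and recalling the definition (\ref{E:killing}) of the killing functional, I would obtain
\[
\sup\left\lbrace L_f(\varphi):\varphi\in E_{f^2}\right\rbrace=2\mathcal{E}(f)-\inf\left\lbrace\mathcal{E}(f^2,\varphi):\varphi\in E_{f^2}\right\rbrace=2\mathcal{E}(f)-K(f^2).
\]
Since $(\mathcal{E},\mathcal{D}_0)$ has zero killing and $f^2\in\mathcal{D}_0^+$, we have $K(f^2)=0$, and (\ref{E:suprep}) follows upon dividing by two.

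There is essentially no serious obstacle here: the whole statement rests on the identity $f\varphi=f$, which is the only place the specific form of $E_{f^2}$ enters, together with the recognition that the remaining term $\mathcal{E}(f^2,\varphi)$ is exactly the quantity whose infimum defines $K(f^2)$. The one point requiring a moment of care is verifying that $E_{f^2}$ is nonempty, so that the supremum is meaningful; this is guaranteed by $f^2\in\mathcal{D}_0^+$, which in turn uses that $\mathcal{D}$ is an algebra. Everything else is direct substitution.
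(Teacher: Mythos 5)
Your proof is correct and follows essentially the same route as the paper: the key observation in both is that $\varphi\in E_{f^2}$ forces $f\varphi=f$, whence $L_f(\varphi)=2\mathcal{E}(f)-\mathcal{E}(f^2,\varphi)$, and the supremum reduces to the infimum defining $K(f^2)$, which vanishes by the zero-killing hypothesis. The paper phrases this with an $\varepsilon$-argument rather than naming $K(f^2)$ explicitly, but the substance is identical.
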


\begin{proof}
To verify (\ref{E:suprep}) let $f\in\mathcal{D}_0$ and let $\varepsilon>0$ be arbitrary. Then $f^2\in\mathcal{D}_0^+$, and there is some $\varphi\in E_{f^2}$ such that $0\leq \mathcal{E}(f^2,\varphi)\leq \varepsilon$, and since $f\varphi=f$, formula (\ref{E:Lagrange1}) yields $2\mathcal{E}(f)\leq L_f(\varphi)+\varepsilon\leq 2\mathcal{E}(f)+\varepsilon$.
\end{proof}

However, if the functionals $L_f$ are positive, $\mathcal{E}(f)$ is seen to be half the norm of $L_f$. In particular, for a bilinear form with zero killing the upper bound in (\ref{E:Lagrangesupbound}) is sharp.

\begin{corollary}\label{C:functpos}
Let $(\mathcal{E},\mathcal{D})$ be a bilinear form such that $(\mathcal{E},\mathcal{D}_0)$ has zero killing and all $L_f$, $f\in\mathcal{D}_0$, are positive linear functionals. Then for any $f\in\mathcal{D}_0$,
\begin{equation}\label{E:normrep}
\mathcal{E}(f)=\frac12\left\|L_f\right\|_{\mathcal{D}'}.
\end{equation}
\end{corollary}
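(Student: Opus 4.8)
The plan is to combine the representation (\ref{E:suprep}) from the preceding lemma with the observation that, for a positive functional, the dual norm $\left\|L_f\right\|_{\mathcal{D}'}$ is already attained on the order interval $E_{f^2}$. Since (\ref{E:suprep}) gives $\mathcal{E}(f)=\frac12\sup\{L_f(\varphi):\varphi\in E_{f^2}\}$ for every $f\in\mathcal{D}_0$, it suffices to establish the identity
\[\left\|L_f\right\|_{\mathcal{D}'}=\sup\{L_f(\varphi):\varphi\in E_{f^2}\}.\]
Note first that $E_{f^2}$ is nonempty: as $\mathcal{D}$ is an algebra and $f\in\mathcal{D}_0$, we have $f^2\in\mathcal{D}_0^+$, which by the definition of $\mathcal{D}_0^+$ means $E_{f^2}\neq\emptyset$. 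The inequality ``$\geq$'' is immediate, because every $\varphi\in E_{f^2}$ satisfies $0\leq\varphi\leq\mathbf{1}$, hence $\left\|\varphi\right\|_{\sup}\leq 1$ and therefore $L_f(\varphi)\leq\left\|L_f\right\|_{\mathcal{D}'}$.

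For the reverse inequality I would first reduce the computation of the dual norm to nonnegative test functions. Since $\mathcal{D}$ is a vector lattice, $|h|\in\mathcal{D}$ for any $h\in\mathcal{D}$, and the positivity of $L_f$ yields $|L_f(h)|\leq L_f(|h|)$ via $L_f(|h|\pm h)\geq 0$. Consequently $\left\|L_f\right\|_{\mathcal{D}'}=\sup\{L_f(g):g\in\mathcal{D}^+,\ \left\|g\right\|_{\sup}\leq 1\}$, so it is enough to dominate each such $L_f(g)$ by the supremum over $E_{f^2}$.

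The key step is then a lattice truncation argument. Fix $g\in\mathcal{D}^+$ with $\left\|g\right\|_{\sup}\leq 1$ and any $\varphi\in E_{f^2}$. I claim $\varphi\vee g\in E_{f^2}$: it lies in $\mathcal{D}$ by the lattice property, it dominates $\varphi\geq\mathbf{1}_{\{f^2>0\}}$, and it is bounded above by $\mathbf{1}$ since both $\varphi$ and $g$ are. Because $L_f$ is positive (hence monotone) and $\varphi\vee g\geq g$, this gives $L_f(g)\leq L_f(\varphi\vee g)\leq\sup\{L_f(\psi):\psi\in E_{f^2}\}$. Taking the supremum over all admissible $g$ yields $\left\|L_f\right\|_{\mathcal{D}'}\leq\sup\{L_f(\psi):\psi\in E_{f^2}\}$, which completes the proof.

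The only delicate point is the reduction to nonnegative test functions together with the closure of $E_{f^2}$ under the operation $\varphi\mapsto\varphi\vee g$; both rest on $\mathcal{D}$ being a vector lattice and on the positivity of $L_f$, and neither requires $\mathbf{1}\in\mathcal{D}$. Finiteness of $\left\|L_f\right\|_{\mathcal{D}'}$ poses no difficulty, since (\ref{E:suprep}) already bounds the supremum by $2\mathcal{E}(f)<\infty$.
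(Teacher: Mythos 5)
Your proof is correct and follows essentially the same route as the paper: starting from (\ref{E:suprep}), reducing the dual norm to nonnegative test functions via positivity, and using the lattice operation $\varphi\mapsto\varphi\vee g$ to pass between $E_{f^2}$ and the full unit ball. You have merely written out in detail the steps the paper's two-line argument compresses.
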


\begin{proof}
By the positivity of $L_f$ formula (\ref{E:suprep}) yields $2\mathcal{E}(f)=\sup\left\lbrace L_f(\varphi): \varphi\in\mathcal{D},\ 0\leq \varphi\leq \mathbf{1}\right\rbrace$,
because if $\varphi(x)<\mathbf{1}_{\left\lbrace f\neq 0\right\rbrace}(x)$ for some $x\in X$ then we may just replace $\varphi$ by $\varphi\vee\psi\geq \varphi$ for arbitrary $\psi\in E_{f^2}$, resulting in $L_f(\varphi\vee\psi)\geq L_f(\varphi)$. Again by positivity, this equals
\[\sup\left\lbrace L_f(\varphi): \varphi\in \mathcal{D}, \  \left\|\varphi\right\|_{\sup}\leq 1\right\rbrace.\] 
\end{proof}

\section{Lagrangians}\label{S:Lagrange}

We investigate some properties of Lagrangians. Let $(L,\mathcal{D})$ be a Lagrangian, cf. Definition \ref{D:Lagrange}. Similarly as in Remark \ref{R:unitcontract} we can deduce some immediate consequences of the contraction property $L_{T_1(f)}\leq L_f$, $f\in\mathcal{D}$.

\begin{remark}\label{R:unitcontractL}
Given $h\in\mathcal{D}^+$ the following properties hold:
\begin{enumerate}
\item[(i)] For $f\in\mathcal{D}^+$ and $g\in E_f$ we have $L_{f,g}(h)\geq 0$.
\item[(ii)] For $f,g\in\mathcal{D}^+$ with $f\wedge g=0$ we have $L_{f,g}(h)\leq 0$.
\end{enumerate}
\end{remark}

Let $\mathcal{D}_0\subset \mathcal{D}$ be as in Section \ref{S:zerokill}. A Lagrangian $(L,\mathcal{D})$ induces a Lagrangian $(L,\mathcal{D}_0)$ by restriction, note that $\mathcal{D}'\subset \mathcal{D}_0'$. The contraction properties are inherited. For any $h\in\mathcal{D}^+$ and any $f\in\mathcal{D}^+_0$ set
\[K^h(f)=\inf\left\lbrace L_{f,\varphi}(h):\varphi\in E_f\right\rbrace.\]
Thanks to Remark \ref{R:unitcontractL} we can follow similar arguments as in Lemma \ref{L:killing} to see that $K^h$ is a positive linear functional, and we can define $K^h$ on all of $\mathcal{D}_0$ by linearity. We say that $(L,\mathcal{D}_0)$ has \emph{zero killing} if for any $h\in\mathcal{D}^+_0$ and any  $f\in\mathcal{D}_0$ we have $K^h(f)=0$. Adapting the proof of Proposition \ref{P:splitoffkill} we see that 
$L_f(h)\leq K^h(f)$ for any $h\in\mathcal{D}^+$ and any $f\in\mathcal{D}_0$ and that moreover, 
\begin{equation}\label{E:Ltilde}
\widetilde{L}_{f,g}(h):=L_{f,g}(h)-K^h(fg)
\end{equation}
defines a Lagrangian $(\widetilde{L},\mathcal{D}_0)$ with zero killing. Note that the Lagrangian generated by $(\mathcal{E},\mathcal{D}_0)$ agrees (in $\mathcal{D}'$) with the restriction $(L,\mathcal{D}_0)$ of the Lagrangian $(L,\mathcal{D})$ generated by $(\mathcal{E},\mathcal{D})$.

\begin{remark}
Obviously we may rephrase Theorem \ref{T:positive} by saying that if normal contractions operate on a sup-norm closable bilinear form on a $C_c^1(\mathbb{R}^2)$-stable algebra then it generates a Lagrangian on which normal contractions operate.
\end{remark}

\begin{corollary}\label{C:zerokill} Let $(L, \mathcal{D})$ be a Lagrangian generated by a bilinear form $(\mathcal{E},\mathcal{D})$.
\begin{enumerate}
\item[(i)] If $(\mathcal{E},\mathcal{D}_0)$ has zero killing, then $(L,\mathcal{D}_0)$ has zero killing.
\item[(ii)] If $(Q,\mathcal{D}_0)$ is the bilinear form with zero killing obtained from $(\mathcal{E},\mathcal{D})$ as in Proposition \ref{P:splitoffkill}, then $(\widetilde{L},\mathcal{D}_0)$ defined in (\ref{E:Ltilde}) is the Lagrangian generated by $(Q,\mathcal{D}_0)$. For the respective killing functionals we have $K^h(f^2)=K(f^2h)$, $f\in\mathcal{D}_0$, $h\in \mathcal{D}^+_0$.
\end{enumerate}
\end{corollary}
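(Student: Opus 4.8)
The plan is to prove both parts of Corollary \ref{C:zerokill} by reducing the statements about the generated Lagrangian to the already-established facts about the killing functional $K$ of the bilinear form $(\mathcal{E},\mathcal{D})$ from Section \ref{S:zerokill}. The guiding observation is that for fixed $h\in\mathcal{D}^+$, evaluating the Lagrangian at $h$ produces a bilinear form $(f,g)\mapsto L_{f,g}(h)$ on $\mathcal{D}$, and the functional $K^h$ is by construction the killing functional of this form. So everything should follow by comparing $K^h$ against the killing functional $K$ of $\mathcal{E}$ itself, via the defining identity (\ref{E:Lagrange1}).

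For part (ii), which I expect to carry most of the content, I would first establish the pointwise identity $K^h(f^2)=K(f^2 h)$ for $f\in\mathcal{D}_0$ and $h\in\mathcal{D}_0^+$, since this is the algebraic heart of the statement. The natural route is to unfold the definitions: $K^h(f^2)=\inf\{L_{f^2,\varphi}(h):\varphi\in E_{f^2}\}$, then use (\ref{E:Lagrange1}) and the Leibniz identity (\ref{E:Leibniz}) to rewrite $L_{f^2,\varphi}(h)$ in terms of $\mathcal{E}$ evaluated on products with $h$. Since $\varphi\in E_{f^2}$ means $\varphi\equiv 1$ where $f^2>0$, one has $f^2\varphi=f^2$ and $f^2 h\varphi = f^2 h$, so I would argue that $\inf_\varphi L_{f^2,\varphi}(h)$ matches $\inf_\psi\mathcal{E}(f^2 h,\psi)$ over $\psi\in E_{f^2 h}$, which is exactly $K(f^2 h)$. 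The matching of the two index sets $E_{f^2}$ (via functionals against $h$) and $E_{f^2 h}$ is the delicate point: one must check that $\{f^2>0\}$ and $\{f^2 h>0\}$ interact correctly, using that $h\in\mathcal{D}_0^+$ so that $h$ can be incorporated without enlarging the support. Having $K^h(f^2)=K(f^2 h)$, the claim that $(\widetilde{L},\mathcal{D}_0)$ is the Lagrangian generated by $(Q,\mathcal{D}_0)$ follows by writing out the Lagrangian $L^Q$ generated by $Q$ using (\ref{E:Lagrange1}) with $Q=\mathcal{E}-K(\cdot)$ in place of $\mathcal{E}$, and checking term by term that $L^Q_{f,g}(h)=L_{f,g}(h)-K^h(fg)=\widetilde{L}_{f,g}(h)$; this is bilinear bookkeeping that I would carry out on the diagonal $f=g$ and extend by polarization.

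Part (i) should then be a short consequence: if $(\mathcal{E},\mathcal{D}_0)$ has zero killing, then $K\equiv 0$ on $\mathcal{D}_0$, so in particular $K(f^2 h)=0$ for all relevant $f,h$; combined with the identity $K^h(f^2)=K(f^2 h)$ from part (ii) (or a direct repetition of its argument), this forces $K^h(f^2)=0$, and by the linear extension of $K^h$ over $\mathcal{D}_0$ together with positivity one concludes $K^h\equiv 0$, which is precisely the zero-killing property for $(L,\mathcal{D}_0)$. I would note that $K^h$ is already known to be a positive linear functional by the argument adapting Lemma \ref{L:killing}, so vanishing on the generating cone $\{f^2:f\in\mathcal{D}_0\}$ (together with the ideal structure of $\mathcal{D}_0^+$) propagates to all of $\mathcal{D}_0$.

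The main obstacle I anticipate is the careful justification of the identity $K^h(f^2)=K(f^2 h)$, specifically the interchange between taking the infimum over cut-off functions $\varphi$ and multiplying the test function $h$ into the form. One must verify that the approximating functions $\varphi\in E_{f^2}$ and $\psi\in E_{f^2 h}$ generate the same infimum after the $h$-weighting, which relies on the precise support condition defining $E_f$ in (\ref{E:Ef}) and on $h$ being nonnegative. I would handle this by showing both inequalities separately, mirroring the two-sided estimate in the proof of Lemma \ref{L:killing}, and by using Remark \ref{R:unitcontractL}(ii) to discard the error terms arising when a cut-off exceeds the required indicator on the relevant support set.
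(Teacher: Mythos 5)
Your plan for the central identity $K^h(f^2)=K(f^2h)$ delivers only one of the two inequalities, and the missing one is exactly where the content lies. Polarizing (\ref{E:Lagrange1}) gives $L_{u,v}(h)=\mathcal{E}(uh,v)+\mathcal{E}(vh,u)-\mathcal{E}(uv,h)$, so for $\varphi\in E_{f^2}$, using $f^2\varphi=f^2$,
\[L_{f^2,\varphi}(h)=\mathcal{E}(f^2h,\varphi)-\mathcal{E}\bigl((1-\varphi)h,f^2\bigr),\]
and since $(1-\varphi)h\geq 0$ vanishes on $\left\lbrace f^2>0\right\rbrace$, Remark \ref{R:unitcontract} (ii) shows that the error term $-\mathcal{E}((1-\varphi)h,f^2)$ you propose to ``discard'' is \emph{nonnegative}, i.e.\ it sits on the wrong side. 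Combined with $E_{f^2}\subset E_{f^2h}$ this yields $K^h(f^2)\geq K(f^2h)$, but the reverse inequality requires a single sequence $(\varphi_n)_n\subset E_{f^2}$ along which simultaneously $\mathcal{E}(f^2h,\varphi_n)\to K(f^2h)$ and $\mathcal{E}((1-\varphi_n)h,f^2)\to 0$. Neither Remark \ref{R:unitcontractL} (ii) nor the $\vee$-combination device from Lemma \ref{L:killing} produces such a sequence, and your proposal supplies no argument for this simultaneous minimization; this is a genuine gap, not a routine verification.

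The paper never confronts this infimum interchange. It writes $L_{f^2,\varphi}(h)$ in two ways as a zero-killing Lagrangian plus a killing term, $\widetilde{L}_{f^2,\varphi}(h)+K^h(f^2)=L^{\mathcal{Q}}_{f^2,\varphi}(h)+K(f^2h)$, and takes the infimum over $\varphi\in E_{f^2}$: the infimum of $\widetilde{L}_{f^2,\varphi}(h)$ vanishes because $(\widetilde{L},\mathcal{D}_0)$ has zero killing by construction, and the infimum of $L^{\mathcal{Q}}_{f^2,\varphi}(h)$ vanishes because $(\mathcal{Q},\mathcal{D}_0)$ has zero killing and part (i) applies to it. This exposes a second, structural problem with your plan: you deduce (i) from the identity in (ii), whereas the paper proves (i) first and independently (via the chain $0\leq L_{f^2,\varphi}(h)\leq\left\|h\right\|_{\sup}L_{f^2,\varphi}(\psi\vee\varphi)$ with $\psi\in E_h$, compared against $\mathcal{E}(f^2,\varphi)$) and then uses (i), applied to $\mathcal{Q}$, inside the proof of (ii). If you repair (ii) along the paper's lines, your intended derivation of (i) from (ii) becomes circular, so (i) must in any case be given its own direct proof.
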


\begin{proof}
To see (i), let $f\in\mathcal{D}_0$, $h\in\mathcal{D}_0^+$ and $\psi\in E_h$. By Remark \ref{R:unitcontractL} (i) we have
\[0\leq L_{f^2,\varphi}(h)\leq \left\|h\right\|_{\sup} L_{f^2,\varphi}(\psi)\leq \left\|h\right\|_{\sup} L_{f^2,\varphi}(\psi\vee \varphi)\]
for any $\varphi\in E_{f^2}$, and using Remark \ref{R:unitcontract} (i) and the symmetry of $\mathcal{E}$, 
\[L_{f^2,\varphi}(\varphi\vee \psi)=\mathcal{E}(f^2(\psi\vee\varphi),\varphi)+\mathcal{E}(\varphi(\psi\vee\varphi),f^2)-\mathcal{E}(f^2\varphi,\psi\vee\varphi)\leq\mathcal{E}(f^2,\varphi).\]
Taking infima yields (i). For (ii) let $(L^\mathcal{Q},\mathcal{D}_0)$ denote the Lagrangian generated by $(\mathcal{Q},\mathcal{D}_0)$ and let $K$ denote the killing functional as in Proposition \ref{P:splitoffkill}. Given $f\in\mathcal{D}_0$, $\varphi\in E_{f^2}$ and $h\in\mathcal{D}_0^+$, we have
\[\widetilde{L}_{f^2,\varphi}(h)+K^h(f^2)=L_{f^2,\varphi}(h)=L^{\mathcal{Q}}_{f^2,\varphi}(h)+K(f^2h).\]
Taking infima shows $K^h(f^2)=K(f^2h)$ and therefore $(\widetilde{L},\mathcal{D}_0)=(L^\mathcal{Q},\mathcal{D}_0)$.
\end{proof}

Similarly as going from a bilinear form to a Lagrangian we can go from a Lagrangian to a bilinear form. Recall from Section \ref{S:main} that to $(\mathcal{E}_L,\mathcal{D})$ with
\begin{equation}\label{E:EL}
\mathcal{E}_L(f):=\frac{1}{2}\left\|L_f\right\|_{\mathcal{D}'}\ ,\ \ f\in\mathcal{D},
\end{equation}
we refer as the \emph{energy form associated with $(L,\mathcal{D})$}. Note that if $\mathbf{1}\in\mathcal{D}$, then the positivity of $L_f$ implies $\mathcal{E}(f)=L_f(\mathbf{1})$. The functional $f\mapsto \mathcal{E}_L$ satisfies the parallelogram identity and inherits the contraction properties.

\begin{lemma}
We have
\[2((\mathcal{E}_L(f)+\mathcal{E}_L(g))=\mathcal{E}_L(f+g)+\mathcal{E}_L(f-g)\]
for any $f,g\in\mathcal{D}$. The unit contraction operates on $(\mathcal{E}_L,\mathcal{D})$. If normal contractions operate on $(L,\mathcal{D})$, then also on $(\mathcal{E}_L,\mathcal{D})$.
\end{lemma}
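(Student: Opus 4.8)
The plan is to reduce all three assertions to a single auxiliary fact about the behaviour of the dual norm $\|\cdot\|_{\mathcal{D}'}$ on \emph{positive} functionals. Since $\mathcal{D}$ is a vector lattice, $h\in\mathcal{D}$ implies $|h|\in\mathcal{D}$, and positivity of a functional $\phi\in\mathcal{D}'$ gives $|\phi(h)|\le \phi(|h|)$; because $\||h|\|_{\sup}=\|h\|_{\sup}$, this yields the representation
\[
\|\phi\|_{\mathcal{D}'}=\sup\left\lbrace \phi(h): h\in\mathcal{D}^+,\ \|h\|_{\sup}\le 1\right\rbrace
\]
for every positive $\phi\in\mathcal{D}'$. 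From this I would extract two properties. First, \emph{monotonicity}: if $\phi,\psi\in\mathcal{D}'$ are positive and $\phi\le\psi$ (meaning $\psi-\phi$ is positive), then testing against the same $h\in\mathcal{D}^+$ with $\|h\|_{\sup}\le 1$ and taking suprema gives $\|\phi\|_{\mathcal{D}'}\le\|\psi\|_{\mathcal{D}'}$. Second, \emph{additivity}: if $\phi,\psi$ are both positive then $\|\phi+\psi\|_{\mathcal{D}'}=\|\phi\|_{\mathcal{D}'}+\|\psi\|_{\mathcal{D}'}$. The inequality ``$\le$'' is the triangle inequality; for ``$\ge$'' I would fix $\varepsilon>0$, choose $h_1,h_2\in\mathcal{D}^+$ of sup-norm at most one nearly attaining the two norms, and test $\phi+\psi$ against $h_1\vee h_2\in\mathcal{D}^+$, which again has sup-norm at most one. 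Since $h_1\vee h_2\ge h_i$ and both functionals are positive, $(\phi+\psi)(h_1\vee h_2)\ge \phi(h_1)+\psi(h_2)\ge\|\phi\|_{\mathcal{D}'}+\|\psi\|_{\mathcal{D}'}-2\varepsilon$, and letting $\varepsilon\to 0$ closes the gap.

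With additivity in hand the parallelogram identity is immediate. Bilinearity and symmetry of $L$ give, in $\mathcal{D}'$,
\[
L_{f+g}+L_{f-g}=2L_f+2L_g,
\]
where the functionals $L_f,L_g,L_{f+g},L_{f-g}$ are all positive because $(L,\mathcal{D})$ is positive definite. Applying additivity to both sides produces
\[
\|L_{f+g}\|_{\mathcal{D}'}+\|L_{f-g}\|_{\mathcal{D}'}=2\|L_f\|_{\mathcal{D}'}+2\|L_g\|_{\mathcal{D}'},
\]
and dividing by two together with the definition $\mathcal{E}_L(\cdot)=\tfrac12\|L_\cdot\|_{\mathcal{D}'}$ yields exactly $2(\mathcal{E}_L(f)+\mathcal{E}_L(g))=\mathcal{E}_L(f+g)+\mathcal{E}_L(f-g)$.

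For the contraction properties I would invoke monotonicity. The Lagrangian axiom that the unit contraction operates reads $L_{T_1(f)}\le L_f$ in $\mathcal{D}'$, with both functionals positive; monotonicity gives $\|L_{T_1(f)}\|_{\mathcal{D}'}\le\|L_f\|_{\mathcal{D}'}$, i.e. $\mathcal{E}_L(T_1(f))\le\mathcal{E}_L(f)$, so the unit contraction operates on $(\mathcal{E}_L,\mathcal{D})$. If in addition normal contractions operate on $(L,\mathcal{D})$, then $\mathcal{D}$ is stable under normal contractions and $L_{F(f)}\le L_f$ for every normal contraction $F$; the identical monotonicity step yields $\mathcal{E}_L(F(f))\le\mathcal{E}_L(f)$. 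The only genuinely delicate point is the additivity of the dual norm on positive functionals: were $\mathbf{1}\in\mathcal{D}$ one could simply test at $\mathbf{1}$, but in general the unit need not lie in $\mathcal{D}$, and it is precisely the lattice join $h_1\vee h_2$ that replaces this testing and makes the argument go through. All remaining steps are formal consequences of the bilinearity of $L$ and the definition of $\mathcal{E}_L$.
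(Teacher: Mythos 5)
Your proof is correct, and it reaches the conclusion by a somewhat different organization than the paper. The paper approximates $2\mathcal{E}_L(f)$ and $2\mathcal{E}_L(g)$ by $L_f(\varphi_1)$ and $L_g(\varphi_2)$ with $\varphi_1\in E_{f^2}$, $\varphi_2\in E_{g^2}$, replaces both test functions by $\varphi_1\vee\varphi_2$, and then has to verify the membership relations $\varphi_1\vee\varphi_2\in E_{(f+g)^2}\cap E_{(f-g)^2}$ (and, for the converse inequality, the pointwise case analysis showing $\varphi_1\vee\varphi_2\in E_{f^2}\cap E_{g^2}$); the contraction claims are then dispatched with a one-line reference to the definition of $\mathcal{E}_L$. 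You instead isolate a standalone fact — that on a vector lattice of bounded functions the dual norm restricted to positive functionals is both monotone and additive, with the sup taken over all of $\left\lbrace h\in\mathcal{D}^+ : \left\|h\right\|_{\sup}\leq 1\right\rbrace$ rather than over the sets $E_{f^2}$ — and then read off the parallelogram identity from $L_{f+g}+L_{f-g}=2L_f+2L_g$ and the contraction properties from monotonicity applied to $L_{T_1(f)}\leq L_f$ and $L_{F(f)}\leq L_f$. The essential mechanism is identical in both arguments (combine near-maximizing nonnegative test functions via the lattice join $h_1\vee h_2$ and exploit positivity), but your packaging buys two things: it avoids the $E_{f^2}$-bookkeeping entirely, including the implicit reliance on the identification of $\frac12\sup_{\varphi\in E_{f^2}}L_f(\varphi)$ with $\mathcal{E}_L(f)$, and it makes the two contraction statements genuinely one-line corollaries of the monotonicity of the dual norm rather than assertions left to the reader. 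The one step you should make explicit is the identity $\left\|\phi\right\|_{\mathcal{D}'}=\sup\left\lbrace\phi(h): h\in\mathcal{D}^+,\ \left\|h\right\|_{\sup}\leq 1\right\rbrace$, which, as you note, uses $|\phi(h)|\leq\phi(|h|)$ and hence the standing assumption that $\mathcal{D}$ is a vector lattice; this is available here since the paper assumes it throughout.
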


Accordingly, polarization produces an associated bilinear form $(\mathcal{E}_L,\mathcal{D})$,
\[\mathcal{E}_L(f,g)=\frac{1}{4}(\mathcal{E}_L(f+g)-\mathcal{E}_L(f-g)).\]

\begin{proof}
For any $\varepsilon>0$ we can find $\varphi_1\in E_{f^2}$ and $\varphi_2\in E_{g^2}$ such that 
\[2(\mathcal{E}_L(f)+\mathcal{E}_L(g))\leq L_f(\varphi_1)+L_g(\varphi_2)+\varepsilon.\]
By positivity and bilinearity this is less or equal
\begin{align}
L_f(\varphi_1\vee \varphi_2)+L_g(\varphi_1\vee \varphi_2)+\varepsilon &=\frac12\left(L_{f+g}(\varphi_1\vee\varphi_2)+L_{f-g}(\varphi_1\vee\varphi_2)\right)+\varepsilon\notag\\
&\leq\frac12\left(\mathcal{E}_L(f+g)+\mathcal{E}_L(f-g)\right)+\varepsilon\notag
\end{align}
since $\varphi_1\vee \varphi_2\in E_{(f+g)^2}\cap E_{(f-g)^2}$. Conversely, given $\varepsilon>0$ there are $\varphi_1\in E_{(f+g)^2}$ and $\varphi_2\in E_{(f-g)^2}$ with
\begin{align}
\mathcal{E}_L(f+g)+\mathcal{E}_L(f-g)&\leq L_{f+g}(\varphi_1)+L_{f-g}(\varphi_2)+\varepsilon\notag\\
&\leq L_{f+g}(\varphi_1\vee\varphi_2)+L_{f-g}(\varphi_1\vee \varphi_2)+\varepsilon\notag\\
&=2\left(L_f(\varphi_1\vee\varphi_2)+L_g(\varphi_1\vee\varphi_2)\right)+\varepsilon\\
&\leq 2\left(\mathcal{E}_L(f)+\mathcal{E}_L(g)\right)+\varepsilon,\notag
\end{align}
note that $\varphi_1\vee\varphi_2\in E_{f^2}\cap E_{g^2}$. In fact, if $x\in X$ is such that $f(x)^2>0$ then $(f+g)^2(x)>0$ or $g(x)=-f(x)$. In the first case we obtain $\varphi_1(x)=1$ and in the second, $\varphi_2(x)=1$, because $(f-g)^2(x)=4f(x)^2>0$. Similarly for $g$. The  contraction properties follow from (\ref{E:EL}).
\end{proof}

For Lagrangians generated by a bilinear form Corollary \ref{C:functpos} (ii) has the following consequence.

\begin{corollary}\label{C:consistent}
For a Lagrangian $(L,\mathcal{D})$ generated by a bilinear form $(\mathcal{E},\mathcal{D})$ we have $\mathcal{E}_L=\mathcal{E}$ on $\mathcal{D}_0$.
\end{corollary}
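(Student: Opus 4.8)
The plan is essentially to quote the norm representation (\ref{E:normrep}) of Corollary \ref{C:functpos}, whose hypotheses I would verify on $\mathcal{D}_0$. The positivity hypothesis costs nothing: a Lagrangian is positive definite by Definition \ref{D:Lagrange}, so every $L_f$ is a positive linear functional, and this is inherited by the restriction $(L,\mathcal{D}_0)$, which by the remark closing Section \ref{S:Lagrange} coincides in $\mathcal{D}'$ with the Lagrangian generated by $(\mathcal{E},\mathcal{D}_0)$. Thus the functionals $L_f$ appearing in $\mathcal{E}_L(f)=\tfrac12\|L_f\|_{\mathcal{D}'}$ are exactly those attached to $\mathcal{E}$ by (\ref{E:Lagrange1}), and there is nothing to reconcile on that account.

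With positivity in hand I would compute $\|L_f\|_{\mathcal{D}'}$ directly. Since $L_f\geq 0$, the supremum over $\{\varphi\in\mathcal{D}:\|\varphi\|_{\sup}\leq 1\}$ equals the supremum over $\{0\leq\varphi\leq\mathbf{1}\}$, and the enlargement trick from the proof of Corollary \ref{C:functpos} (replacing $\varphi$ by $\varphi\vee\psi\in E_{f^2}$ for an arbitrary $\psi\in E_{f^2}$, which does not decrease $L_f$) reduces this further to a supremum over $E_{f^2}$. On $E_{f^2}$ one has $f\varphi=f$ and $f^2\varphi=f^2$, so (\ref{E:Lagrange1}) gives $L_f(\varphi)=2\mathcal{E}(f)-\mathcal{E}(f^2,\varphi)$; taking the supremum turns $-\mathcal{E}(f^2,\varphi)$ into $-K(f^2)$ by the very definition (\ref{E:killing}) of the killing functional, whence $\tfrac12\|L_f\|_{\mathcal{D}'}=\mathcal{E}(f)-\tfrac12 K(f^2)$. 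This is exactly the point at which Corollary \ref{C:zerokill} (ii) enters: writing $\mathcal{Q}=\mathcal{E}-K(\,\cdot\,)$ for the zero-killing part supplied by Proposition \ref{P:splitoffkill} and $\widetilde{L}$ for its generated Lagrangian, one has the clean splitting $L_f(h)=\widetilde{L}_f(h)+K(f^2h)$, and in the zero-killing situation $K(f^2)=0$ the computation collapses to $\mathcal{E}_L(f)=\mathcal{E}(f)$ for every $f\in\mathcal{D}_0$, which is the assertion.

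The step I expect to require the most care is the bookkeeping of the killing term, since $\tfrac12\|L_f\|_{\mathcal{D}'}$ equals $\mathcal{E}(f)$ only once the contribution $K(f^2)$ has been correctly isolated and seen to vanish; the decomposition $L_f=\widetilde{L}_f+K(f^2\,\cdot\,)$ furnished by Corollary \ref{C:zerokill} (ii) is precisely what makes this transparent and shows that (\ref{E:normrep}) is the instance of Corollary \ref{C:functpos} in which that infimum is zero. A minor auxiliary point is that the supremum computation above already exhibits $L_f$ as bounded, so no separate boundedness argument is needed, and the uniform density of $\mathcal{D}_0$ in $\mathcal{D}$ (Lemma \ref{L:D0dense}) can be invoked if one wishes to compare the functional on $\mathcal{D}$ with its restriction to $\mathcal{D}_0$.
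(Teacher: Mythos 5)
Your calculation is correct as far as it goes, and it follows essentially the same route as the paper (positivity plus the enlargement trick to reduce $\left\|L_f\right\|_{\mathcal{D}'}$ to a supremum over $E_{f^2}$, then the killing decomposition of Proposition \ref{P:splitoffkill} and Corollary \ref{C:zerokill}). But the argument does not prove the corollary as stated. Your own formula
\[
\mathcal{E}_L(f)=\tfrac12\sup_{\varphi\in E_{f^2}}\bigl(2\mathcal{E}(f)-\mathcal{E}(f^2,\varphi)\bigr)=\mathcal{E}(f)-\tfrac12 K(f^2)
\]
shows that $\mathcal{E}_L(f)=\mathcal{E}(f)$ holds precisely when $K(f^2)=0$, and the corollary carries no zero-killing hypothesis. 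The closing step, ``in the zero-killing situation $K(f^2)=0$ the computation collapses,'' therefore proves a different statement from the one asked: for a general $f\in\mathcal{D}_0$ the term $\tfrac12K(f^2)$ does not go away. Appealing to Corollary \ref{C:zerokill} (ii) does not repair this, since the splitting $L_f(h)=\widetilde{L}_f(h)+K(f^2h)$ together with Corollary \ref{C:functpos} applied to the zero-killing form $\mathcal{Q}(f,g)=\mathcal{E}(f,g)-K(fg)$ returns exactly $\mathcal{E}_L(f)=\mathcal{Q}(f)+\tfrac12K(f^2)=\mathcal{E}(f)-\tfrac12K(f^2)$ again (note $K(f^2h)=K(f^2)$ for $h\in E_{f^2}$).

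You should be aware that this is not a defect of your bookkeeping but of the statement you are trying to reach. Take $X$ a single point, $\mathcal{D}=\mathbb{R}$ and $\mathcal{E}(f,g)=fg$ (pure killing): then $L_f(h)=f^2h$, so $\mathcal{E}_L(f)=\tfrac12 f^2$ while $\mathcal{E}(f)=f^2$ and $K(f^2)=f^2$, matching your formula and contradicting the claimed identity. The paper's own proof asserts the chain $\mathcal{E}_L(f)=\mathcal{Q}_{L^{\mathcal{Q}}}(f)-K(f^2)=\mathcal{Q}(f)-K(f^2)=\mathcal{E}(f)$, which with $\mathcal{Q}(f)=\mathcal{E}(f)-K(f^2)$ cannot be right either; your careful computation has in effect located the sign slip there, and is consistent with the paper's own remark after (\ref{E:suprep}) that the bound $\left\|L_f\right\|_{\mathcal{D}'}\leq 2\mathcal{E}(f)$ is sharp \emph{for forms with zero killing}. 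The honest conclusion of your argument is $\mathcal{E}_L(f)=\mathcal{E}(f)-\tfrac12K(f^2)$ on $\mathcal{D}_0$; to obtain the stated identity you must either add the hypothesis that $(\mathcal{E},\mathcal{D}_0)$ has zero killing (in which case the whole corollary is just Corollary \ref{C:functpos}) or explain where the extra $\tfrac12K(f^2)$ is supposed to disappear.
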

\begin{proof}
Let $(\mathcal{Q},\mathcal{D}_0)$ be the form with zero killing as in Corollary \ref{C:zerokill} and $(L^{\mathcal{Q}},\mathcal{D}_0)$ its Lagrangian. Then $\mathcal{Q}_{L^\mathcal{Q}}=\mathcal{Q}$ on $\mathcal{D}_0$ by Corollary \ref{C:functpos}. On the other hand, Corollary \ref{C:zerokill} implies
\[\mathcal{E}_L(f)=\frac12\sup\left\lbrace L_f(h): h\in E_{f^2}\right\rbrace=\frac12\sup\left\lbrace L^{\mathcal{Q}}_f(h): h\in E_{f^2}\right\rbrace - K(f^2) =\mathcal{Q}_{L^\mathcal{Q}}(f)-K(f^2).\]
Together, this proves $\mathcal{E}_L(f)=\mathcal{Q}(f)-K(f^2)=\mathcal{E}(f)$.
\end{proof}

The passage (\ref{E:Lagrange1}) from a bilinear form to a Lagrangian encodes a Leibniz rule.

\begin{definition}
We say that a Lagrangian $(L,\mathcal{D})$ \emph{has the global Leibniz property} if for all $f,g,h\in\mathcal{D}$ we have
\[2\mathcal{E}_L(fg,h)=L_{f,h}(g)+L_{g,h}(f).\]
\end{definition}

The following observation is immediate from (\ref{E:Lagrange1}) and Corollary \ref{C:consistent}.

\begin{corollary}
Let $(L,\mathcal{D})$ be a Lagrangian generated by a bilinear form $(\mathcal{E},\mathcal{D})$. Then $(L,\mathcal{D}_0)$ has the global Leibniz property.
\end{corollary}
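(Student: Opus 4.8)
The plan is to read off the claim by combining the Leibniz-type identity (\ref{E:Leibniz}) --- which is built into the passage (\ref{E:Lagrange1}) from a bilinear form to its Lagrangian --- with the consistency statement of Corollary \ref{C:consistent}. Fix $f,g,h\in\mathcal{D}_0$. Since $(L,\mathcal{D})$ is generated by $(\mathcal{E},\mathcal{D})$, identity (\ref{E:Lagrange1}) holds, and therefore so does its polarized form (\ref{E:Leibniz}); applied to these three functions it reads
\[2\mathcal{E}(fg,h)=L_{f,h}(g)+L_{g,h}(f).\]
The right-hand side already coincides with that of the global Leibniz property, so it only remains to replace $\mathcal{E}(fg,h)$ by $\mathcal{E}_L(fg,h)$.

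For this replacement I would invoke Corollary \ref{C:consistent}, which asserts $\mathcal{E}_L=\mathcal{E}$ on $\mathcal{D}_0$. The single point requiring attention is that this equality is only available on $\mathcal{D}_0$, so I must verify that both arguments $fg$ and $h$ lie in $\mathcal{D}_0$. Because $\mathcal{D}$ is an algebra, $\mathcal{D}_0$ is an ideal of $\mathcal{D}$ (as recorded just before Proposition \ref{P:splitoffkill}); hence $f\in\mathcal{D}_0$ together with $g\in\mathcal{D}_0\subset\mathcal{D}$ forces $fg\in\mathcal{D}_0$, while $h\in\mathcal{D}_0$ by assumption. Since $\mathcal{E}_L$ and $\mathcal{E}$ are both recovered from their quadratic forms by polarization, the equality of quadratic forms on the vector space $\mathcal{D}_0$ supplied by Corollary \ref{C:consistent} upgrades to equality of the bilinear forms on $\mathcal{D}_0\times\mathcal{D}_0$, giving $\mathcal{E}_L(fg,h)=\mathcal{E}(fg,h)$.

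Substituting this into the display yields $2\mathcal{E}_L(fg,h)=L_{f,h}(g)+L_{g,h}(f)$ for all $f,g,h\in\mathcal{D}_0$, which is exactly the global Leibniz property for $(L,\mathcal{D}_0)$. There is no genuine obstacle here: the argument is the one-line observation announced in the text, and the only quiet work is done by the ideal property of $\mathcal{D}_0$, which keeps every product inside the domain where the consistency $\mathcal{E}_L=\mathcal{E}$ holds. I therefore expect the proof to be a short substitution rather than anything requiring estimates.
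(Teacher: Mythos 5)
Your proposal is correct and follows exactly the route the paper intends: the paper itself declares the corollary ``immediate from (\ref{E:Lagrange1}) and Corollary \ref{C:consistent}'', i.e.\ combine the polarized identity (\ref{E:Leibniz}) with the consistency $\mathcal{E}_L=\mathcal{E}$ on $\mathcal{D}_0$. The only details you add --- that $fg\in\mathcal{D}_0$ because $\mathcal{D}_0$ is an ideal of the algebra $\mathcal{D}$, and that the quadratic-form equality polarizes to the bilinear one on the vector space $\mathcal{D}_0$ --- are precisely the quiet steps the paper leaves to the reader.
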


The global Leibniz property allows to inherit zero killing to the associated energy. 

\begin{corollary}
Let $(L,\mathcal{D})$ be a Lagrangian such that $(L,\mathcal{D}_0)$ has zero killing and the global Leibniz property. Then $(\mathcal{E}_L,\mathcal{D}_0)$ has zero killing.
\end{corollary}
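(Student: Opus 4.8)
The plan is to verify the defining identity $K^h(f)=0$ for all $h\in\mathcal{D}_0^+$ and all $f\in\mathcal{D}_0$, where $K^h$ is the killing functional of the Lagrangian $(L,\mathcal{D}_0)$. Since by definition $(\mathcal{E}_L,\mathcal{D}_0)$ has zero killing precisely when the killing functional of the \emph{Lagrangian generated by} $(\mathcal{E}_L,\mathcal{D}_0)$ vanishes, I would first observe that the global Leibniz property is exactly what makes the functionals $L_{f,g}$ reproduce the bilinear form $\mathcal{E}_L$ via formula (\ref{E:Lagrange1}). In other words, the hypothesis that $(L,\mathcal{D}_0)$ has the global Leibniz property guarantees that $(L,\mathcal{D}_0)$ \emph{is} the Lagrangian generated by $(\mathcal{E}_L,\mathcal{D}_0)$, so that the killing functional of the generated Lagrangian coincides with the given $K^h$. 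Thus the statement reduces to showing that the already-assumed zero killing of $(L,\mathcal{D}_0)$ transfers to $(\mathcal{E}_L,\mathcal{D}_0)$.

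Next I would reduce to the cone $\mathcal{D}_0^+$ by linearity, since $K^h$ is extended to $\mathcal{D}_0$ via $K^h(f)=K^h(f_+)-K^h(f_-)$ and the killing functional of $(\mathcal{E}_L,\mathcal{D}_0)$ is extended the same way. Fix $f\in\mathcal{D}_0^+$. The killing functional of $(\mathcal{E}_L,\mathcal{D}_0)$ at $f$ is, by definition (\ref{E:killing}) applied to $\mathcal{E}_L$,
\[
K_{\mathcal{E}_L}(f)=\inf\left\lbrace \mathcal{E}_L(f,\varphi):\varphi\in E_f\right\rbrace.
\]
The core step is to express $\mathcal{E}_L(f,\varphi)$ through the Lagrangian using the global Leibniz property together with Corollary \ref{C:consistent}, which already tells us $\mathcal{E}_L=\mathcal{E}$-type consistency on $\mathcal{D}_0$. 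Applying the global Leibniz rule with a suitable factorization — for instance writing $f$ as a product using a function $\psi\in E_f$ that acts as a local unit, so that $f\psi=f$ — lets me rewrite $2\mathcal{E}_L(f,\varphi)=2\mathcal{E}_L(f\psi,\varphi)=L_{f,\varphi}(\psi)+L_{\psi,\varphi}(f)$, relating the energy-level killing to the Lagrangian-level killing $K^h$.

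I would then take the infimum over $\varphi\in E_f$ and use the assumed zero killing of $(L,\mathcal{D}_0)$, namely $K^h(g)=0$ for all $h\in\mathcal{D}_0^+$, $g\in\mathcal{D}_0$, choosing the test function $h$ appropriately (the local unit $\psi$ or $\varphi$ itself restricted to the support) to force each Lagrangian term to have vanishing infimum over $E_f$. The positivity of the functionals $L_f$ and the elementary inequality (\ref{E:bilinest}), combined with the lattice manipulations already used in the proof of Lemma \ref{L:killing} (replacing $\varphi$ by joins $\varphi\vee\psi$ without increasing the relevant quantity), should pin $K_{\mathcal{E}_L}(f)$ between $0$ and an arbitrarily small $\varepsilon$, yielding $K_{\mathcal{E}_L}(f)=0$.

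The main obstacle I anticipate is the bookkeeping that ties the \emph{Lagrangian} killing functional $K^h$, which carries the auxiliary argument $h$, to the \emph{energy} killing functional $K_{\mathcal{E}_L}$, which has no such argument: one must correctly identify which $h$ to insert and verify that passing to the supremum defining $\mathcal{E}_L$ in (\ref{E:EL}) commutes appropriately with the infimum defining the killing. I expect the global Leibniz property to be precisely the tool that bridges this gap, and the remaining work to be a careful repetition of the infimum-over-$E_f$ and join argument already established for $K$ in Lemma \ref{L:killing} and Proposition \ref{P:splitoffkill}.
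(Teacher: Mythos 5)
Your overall strategy---use the global Leibniz property to convert the energy killing functional $\inf\left\lbrace \mathcal{E}_L(f,\varphi):\varphi\in E_f\right\rbrace$ into Lagrangian killing functionals $K^h$ and then invoke the zero-killing hypothesis---is indeed the paper's strategy, but the factorization you choose, $f=f\psi$ with a local unit $\psi\in E_f$, does not line up with the hypothesis, and this is where the argument breaks. The Leibniz rule gives $2\mathcal{E}_L(f,\varphi)=L_{f,\varphi}(\psi)+L_{\psi,\varphi}(f)$, and neither term is a killing functional you are entitled to use: for the first, $\inf_{\varphi\in E_f}L_{f,\varphi}(\psi)=K^{\psi}(f)$, but zero killing of $(L,\mathcal{D}_0)$ is only assumed for test functions $h\in\mathcal{D}_0^+$, and a local unit $\psi\in E_f$ need not belong to $\mathcal{D}_0^+$ (there is no reason for $E_\psi$ to be nonempty); for the second, $K^f(\psi)$ is by definition an infimum over $\varphi\in E_\psi$, not over $E_f$, and since $E_\psi\subseteq E_f$ these infima need not agree (besides which $\psi\in\mathcal{D}_0$ is again required). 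Even if both terms individually had vanishing infimum, you would still have to realize both simultaneously with one $\varphi$; the join trick of Lemma \ref{L:killing} needs comparable index sets, which you do not have here. A smaller point: zero killing of $(\mathcal{E}_L,\mathcal{D}_0)$ is defined directly via (\ref{E:killing}) applied to $\mathcal{E}_L$, not via the Lagrangian that $\mathcal{E}_L$ generates; your later computation uses the correct definition, but the opening reduction is a misreading.

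The paper's (one-line) proof avoids all of this by testing the killing functional on squares: for $f\in\mathcal{D}_0$ one writes $f^2=|f|\cdot|f|$, so the global Leibniz property gives $\mathcal{E}_L(f^2,\varphi)=L_{|f|,\varphi}(|f|)$ for $\varphi\in E_{f^2}$ (up to the harmless constant). Since $E_{f^2}=E_{|f|}$, taking the infimum over $\varphi$ produces exactly $K^{|f|}(|f|)$, and $|f|\in\mathcal{D}_0^+$ because $\mathcal{D}_0^+$ is a lattice ideal, so the zero-killing hypothesis applies verbatim and the infimum vanishes. In other words the correct factorization is $f^2=|f|\cdot|f|$, not $f=f\psi$: it is the choice for which the two Leibniz terms coincide, the test function lands in $\mathcal{D}_0^+$, and the index set of the infimum matches the definition of $K^h$. (If one insists on elements of $\mathcal{D}_0^+$ that are not squares, an additional argument via positivity and additivity of the killing functional on the cone is needed; the paper's proof itself only treats squares.)
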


\begin{proof}
This follows since $\mathcal{E}_L(f^2,\varphi)=2 L_{|f|,\varphi}(|f|)$ for all $f\in\mathcal{D}_0$ and $\varphi\in E_{f^2}$.
\end{proof}

We have already mentioned in Section \ref{S:main} that if $\mathbf{1}\in\mathcal{D}$ then obviously $(L,\mathcal{D})$ is sup-norm-closable if and only if its associated energy is. In the general case the following implication is immediate.

\begin{proposition}\label{P:LtoEL}
Let $(L,\mathcal{D})$ be a sup-norm-closable Lagrangian. Then its associated energy $(\mathcal{E}_L,\mathcal{D})$ is a sup-norm-closable bilinear form. 
\end{proposition}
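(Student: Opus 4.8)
The plan is to unwind both closability notions and argue directly: given an $\mathcal{E}_L$-Cauchy sequence $(f_n)_n \subset \mathcal{D}$ with $\left\|f_n\right\|_{\sup} \to 0$, I want $\mathcal{E}_L(f_n) \to 0$. Since $(\mathcal{E}_L,\mathcal{D})$ is a nonnegative definite symmetric bilinear form (the parallelogram identity established above makes $\mathcal{E}_L(\cdot)^{1/2}$ a seminorm), the triangle inequality shows that $\mathcal{E}_L(f_n)^{1/2}$ is a Cauchy sequence of reals and converges to some $\ell\geq 0$; the entire task reduces to ruling out $\ell>0$.

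First I would record the elementary bound tying the two energies together: for every $h\in\mathcal{D}^+$ and $g\in\mathcal{D}$, positivity of $L_g$ and the definition of the operator norm give $0\leq L_g(h)\leq \left\|L_g\right\|_{\mathcal{D}'}\left\|h\right\|_{\sup}=2\mathcal{E}_L(g)\left\|h\right\|_{\sup}$ by (\ref{E:EL}). Applying this to $g=f_n-f_m$ shows that, for each fixed $h\in\mathcal{D}^+$, the sequence $(f_n)_n$ is Cauchy for the bilinear form $(f,g)\mapsto L_{f,g}(h)$ and still tends to zero uniformly. Since $(L,\mathcal{D})$ is a sup-norm-closable Lagrangian, that form is sup-norm-closable, whence $L_{f_n}(h)\to 0$ for every fixed $h\in\mathcal{D}^+$.

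The main obstacle is that $\mathcal{E}_L(f_n)=\tfrac12\left\|L_{f_n}\right\|_{\mathcal{D}'}$ is a \emph{supremum} over test functions, so pointwise-in-$h$ decay need not force the norms down; I must extract a single test function that witnesses any persistent energy. To this end, suppose $\ell>0$ and fix $\varepsilon\in(0,\ell/2)$. Choose $N$ large enough that $\mathcal{E}_L(f_n-f_m)^{1/2}<\varepsilon$ for all $n,m\geq N$ and that $\mathcal{E}_L(f_N)^{1/2}>\ell/2$. Because $L_{f_N}$ is positive and $\mathcal{D}$ is a lattice, $\left\|L_{f_N}\right\|_{\mathcal{D}'}$ equals the supremum of $L_{f_N}(h)$ over $h\in\mathcal{D}^+$ with $\left\|h\right\|_{\sup}\leq 1$ (replace any admissible $h$ by $|h|$), so I may fix one such $h$ with $L_{f_N}(h)^{1/2}$ close to $\big(2\mathcal{E}_L(f_N)\big)^{1/2}$. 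Then (\ref{E:bilinest}) together with the bound of the second paragraph yields, uniformly in $n\geq N$,
\[
L_{f_n}(h)^{1/2}\geq L_{f_N}(h)^{1/2}-L_{f_n-f_N}(h)^{1/2}\geq L_{f_N}(h)^{1/2}-\sqrt{2}\,\varepsilon,
\]
since $L_{f_n-f_N}(h)\leq 2\mathcal{E}_L(f_n-f_N)<2\varepsilon^2$; the crucial uniformity comes precisely from the fact that this right-hand bound sees only $\left\|h\right\|_{\sup}\leq 1$. For the chosen $N$ and $h$ this lower bound is strictly positive and independent of $n$, contradicting $L_{f_n}(h)\to 0$ from the previous paragraph. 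Hence $\ell=0$, i.e. $\mathcal{E}_L(f_n)\to 0$, so $(\mathcal{E}_L,\mathcal{D})$ is sup-norm-closable.
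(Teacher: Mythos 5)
Your proof is correct and follows essentially the same route as the paper's: both arguments first deduce $\lim_n L_{f_n}(h)=0$ for each fixed $h\in\mathcal{D}^+$ from the sup-norm-closability of the Lagrangian together with the bound $L_g(h)\leq 2\mathcal{E}_L(g)\left\|h\right\|_{\sup}$, and then control the supremum defining $\mathcal{E}_L$ by playing a near-maximizing test function against the uniform-in-$h$ Cauchy estimate obtained from (\ref{E:bilinest}). The only difference is presentational: the paper runs a direct three-epsilon argument with a near-maximizer $h^{(n)}$ for each $n$, whereas you argue by contradiction from a single near-maximizer at one index $N$.
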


\begin{proof}
Let $(f_n)_n\subset\mathcal{D}$ be a sequence that is $\mathcal{E}_L$-Cauchy and such that $\lim_n\left\|f_n\right\|_{\sup}=0$. Note first that by (\ref{E:Lagrangesupbound}) $(f_n)_n$ is $L(h)$-Cauchy for any $h\in\mathcal{D}^+$, hence $\lim_n L_{f_n}(h)=0$. Now let $\varepsilon>0$. For any $n$ there exists some $h^{(n)}\in\mathcal{D}$ with $\left\|h^{(n)}\right\|_{\sup}\leq 1$ such that $\mathcal{E}_L(f_n)\leq L_{f_n}(h^{(n)})+\varepsilon/3$. By (\ref{E:bilinest}) and (\ref{E:Lagrangesupbound}) there is some $n_0=n_0(\varepsilon)$ such that $|L_{f_n}(h)-L_{f_m}(h)|<\varepsilon/3$ for all $h\in\mathcal{D}$ with $\left\|h\right\|_{\sup}\leq 1$, provided $m,n\geq n_0$. For any fixed $n\geq n_0$ we have $L_{f_k}(h^{(n)})<\varepsilon/3$ for any large enough $k$, and as we may assume $k\geq n_0$, also $L_{f_n}(h^{(n)})<2\varepsilon/3$. Hence $\mathcal{E}_L(f_n)<\varepsilon$ for any $n\geq n_0$.
\end{proof}

The converse, which completes the proof of Corollary \ref{C:ELtoL}, is studied in Section \ref{S:Gelfand}.

\section{A special case: Dirichlet forms}\label{S:Dirichlet}

We prove Theorem \ref{T:Dirichletform}.

\begin{proof} 
The positivity of $L$, the contraction property and the bound (\ref{E:Lagrangesupbound}) all follow from \cite[Proposition I.4.1.1]{BH91}. To prove sup-norm-closability, let $h\in \mathcal{B}^+$ and suppose $(f_n)_n\subset \mathcal{B}$ is $L(h)$-Cauchy and that $\lim_n f_n=0$ uniformly on $X$. We may suppose $\sup_n L_{f_n}(h)>0$, otherwise the desired result is immediate. Below we will prove that 
\begin{equation}\label{E:claim}
\text{$L_{f_n,\cdot}(h)$ converges to zero weakly on $\mathcal{D}$.}
\end{equation}
Then, given $\varepsilon>0$, choose $n_0=n_0(\varepsilon)$ such that for any $n\geq n_0$ we have 
\[L_{f_n-f_{n_0}}(h)^{1/2}<\frac{\varepsilon}{2\sup_n L_{f_n}(h)}.\]
By (\ref{E:claim}) we have $|L_{f_n,f_{n_0}}(h)|\leq \varepsilon/2$ for any large enough $n$. Bilinearity, the triangle inequality and Cauchy-Schwarz therefore yield
\[|L_{f_n}(h)|\leq |L_{f_n,f_{n_0}}(h)|+L_{f_n}(h)^{1/2}L_{f_n-f_{n_0}}(h)^{1/2}<\varepsilon.\]
To see (\ref{E:claim}), note first that for arbitrary $v\in\mathcal{B}$ we have
\begin{align}
|\mathcal{E}((f_m-f_n)h,v)|&\leq |L_{f_m-f_n,v}(h)|+|L_{h,v}(f_m-f_n)|\notag\\
&\leq \sqrt{2}L_{f_m-f_n}(h)^{1/2}\left\|h\right\|_{\sup}^{1/2}\mathcal{E}(v)^{1/2}+2\left\|f_m-f_n\right\|_{\sup}\mathcal{E}(h)^{1/2}\mathcal{E}(v)^{1/2},\notag
\end{align}
where we have used the global Leibniz property, estimate (\ref{E:Lagrangesupbound}) and Cauchy-Schwarz. Consequently the sequence $(f_nh)_n$ is weakly $\mathcal{E}$-Cauchy. On the other hand also
\[\lim_n\int_Xf_n hv\:d\mu=0,\]
what implies that the $\mu$-classes of $(f_nh)_n$ form a weak Cauchy sequence in the Hilbert space $(\mathcal{F},\mathcal{E}_1)$. Therefore $\sup_n\mathcal{E}_1(f_nh)<+\infty$ by uniform boundedness, and there exist a subsequence $(f_{n_k}h)_n$ and some $w\in \mathcal{F}$ to which the $\mu$-classes of $(f_{n_k}h)_n$ converge weakly in $\mathcal{F}$. Necessarily $w=0$, so that for any $v\in\mathcal{B}$ the quantity
\[|L_{f_n,v}(h)|\leq |\mathcal{E}(f_nh,v)|+|L_{h,v}(f_n)|\leq |\mathcal{E}(f_nh,v)|+2\left\|f_n\right\|_{\sup} \mathcal{E}(h)^{1/2}\mathcal{E}(v)^{1/2}\]
can be made arbitrarily small by choosing $n$ sufficiently large.
\end{proof}

Similarly as before set
\[\mathcal{B}_0:=\lin\left(\left\lbrace f\in\mathcal{B}^+: E_f\neq \emptyset\right\rbrace\right).\]
From Lemma \ref{L:D0dense} we know that $\mathcal{B}_0$ is uniformly dense in $\mathcal{B}$. In the Dirichlet form case it is also energy dense.

\begin{lemma}
The space $\mathcal{B}_0$ is $\mathcal{E}$-dense in $\mathcal{B}$.
\end{lemma}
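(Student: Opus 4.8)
The plan is to approximate every element of $\mathcal{B}$ in the Hilbert space $(\mathcal{F},\mathcal{E}_1)$ by means of the tranche decomposition already used in Lemma \ref{L:D0dense}. Writing $f=f^+-f^-$ with $f^\pm\in\mathcal{B}^+$ (recall that $\mathcal{B}$ is stable under normal contractions, so $f^\pm\in\mathcal{B}$) and using that $\mathcal{B}_0$ is a linear space, it suffices to treat $f\in\mathcal{B}^+$. For such $f$ and $n\geq 1$ I would set $u_n:=(f-2^{-n})^+=f-T_{2^{-n}}(f)$, which is precisely the partial sum $\sum_{k\geq 1}f_{k,n}$ of positive tranches from (\ref{E:tranches}); exactly as in the proof of Lemma \ref{L:D0dense}, each $f_{k,n}$ with $k\geq 1$ lies in $\mathcal{B}_0$, so $u_n\in\mathcal{B}_0$. (Alternatively, one checks directly that $\varphi:=T_1(2^nf)\in E_{u_n}$, so that $u_n\in\mathcal{B}_0$.) It then remains to prove that $u_n\to f$ in $\mathcal{E}_1$.

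First I would record two easy facts. Since $0\leq u_n\leq f$ and $u_n\to f$ pointwise, dominated convergence gives $u_n\to f$ in $L_2(X,\mu)$, and in particular $\left\|u_n\right\|_{L_2}\to\left\|f\right\|_{L_2}$. Since $x\mapsto (x-2^{-n})^+$ is a normal contraction and normal contractions operate, $\mathcal{E}(u_n)\leq\mathcal{E}(f)$, whence $\sup_n\mathcal{E}_1(u_n)\leq\mathcal{E}_1(f)<+\infty$; thus $(u_n)_n$ is bounded in $(\mathcal{F},\mathcal{E}_1)$.

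The remaining and main step is to upgrade these to strong $\mathcal{E}_1$-convergence. Being bounded, $(u_n)_n$ is weakly sequentially precompact in the Hilbert space $(\mathcal{F},\mathcal{E}_1)$; since weak convergence in $\mathcal{F}$ implies weak convergence in $L_2$ and $u_n\to f$ strongly in $L_2$, every weak limit point must equal $f$, so $u_n\rightharpoonup f$ weakly in $(\mathcal{F},\mathcal{E}_1)$. Weak lower semicontinuity of the Hilbert norm gives $\mathcal{E}_1(f)\leq\liminf_n\mathcal{E}_1(u_n)$, while $\mathcal{E}(u_n)\leq\mathcal{E}(f)$ together with $\left\|u_n\right\|_{L_2}\to\left\|f\right\|_{L_2}$ yields $\limsup_n\mathcal{E}_1(u_n)\leq\mathcal{E}_1(f)$. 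Hence $\mathcal{E}_1(u_n)\to\mathcal{E}_1(f)$, and weak convergence combined with convergence of the norms forces $\mathcal{E}_1(u_n-f)\to 0$. This exhibits $f$ as an $\mathcal{E}_1$-limit of elements of $\mathcal{B}_0$ and proves the lemma. The only genuine obstacle is exactly this last passage: from the one-sided bound $\mathcal{E}(u_n)\leq\mathcal{E}(f)$ and $L_2$-convergence to actual $\mathcal{E}_1$-convergence. The weak-convergence-plus-convergence-of-norms argument in $(\mathcal{F},\mathcal{E}_1)$ is what closes this gap, and it is of the same Hilbert-space type already employed in the proof of Theorem \ref{T:Dirichletform}.
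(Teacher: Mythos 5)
Your proof is correct and takes essentially the same route as the paper: the paper uses the same tranche approximants $f-f_{-1,n}-f_{0,n}$ (your $u_n=(f-2^{-n})^+$ after the harmless reduction to $f\in\mathcal{B}^+$) and simply cites \cite[Theorem 1.4.2 (iv)]{FOT94} for their $\mathcal{E}_1$-convergence. Your weak-convergence-plus-convergence-of-norms argument is precisely the standard proof of that cited fact, so you have only unpacked the reference.
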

\begin{proof}
Given $f\in\mathcal{B}$ and $n\geq 1$ we have $f=\sum_{k=-2^nN}^{2^nN-1} f_{k,n}$, where $N$ is the smallest integer greater than $\left\|f\right\|_{\sup}$. All $f_{k,n}$ but $f_{0,n}$ and $f_{-1,n}$ are in $\mathcal{B}_0$. However, by \cite[Theorem 1.4.2 (iv)]{FOT94} the functions
$\sum_{k\neq -1,0} f_{k,n}=f-f_{-1,n}-f_{0,n}$ converge to $f$ in $(\mathcal{F},\mathcal{E}_1)$.
\end{proof}

Together with Corollary \ref{C:consistent} we can therefore observe that any Dirichlet form induces a sup-closable bilinear form.

\begin{corollary}\label{C:Dirichletform}
We have $\mathcal{E}_L=\mathcal{E}$. In particular, $(\mathcal{E},\mathcal{B})$ is a sup-norm closable bilinear form on which normal contractions operate.
\end{corollary}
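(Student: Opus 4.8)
The plan is to upgrade the identity $\mathcal{E}_L=\mathcal{E}$ from the subspace $\mathcal{B}_0$ to all of $\mathcal{B}$ by combining $\mathcal{E}$-density with continuity. On $\mathcal{B}_0$ the equality $\mathcal{E}_L=\mathcal{E}$ is already supplied by Corollary \ref{C:consistent}, and the preceding lemma records that $\mathcal{B}_0$ is $\mathcal{E}$-dense in $\mathcal{B}$ (indeed the approximating tranches converge in $\mathcal{E}_1$, so in particular $\mathcal{E}(f-f_n)\to 0$). Hence, once I verify that both quadratic functionals $\mathcal{E}$ and $\mathcal{E}_L$ are continuous with respect to the $\mathcal{E}$-seminorm on $\mathcal{B}$, the identity extends to all of $\mathcal{B}$ by approximation, and polarization then yields equality of the two bilinear forms.

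The continuity of $\mathcal{E}$ is immediate: since $\mathcal{E}$ is nonnegative definite, $\sqrt{\mathcal{E}}$ is a seminorm and the reverse triangle inequality gives $|\mathcal{E}(f)^{1/2}-\mathcal{E}(g)^{1/2}|\leq \mathcal{E}(f-g)^{1/2}$. For $\mathcal{E}_L$ I would first recall that $\mathcal{E}_L$ satisfies the parallelogram identity (established in the lemma preceding Corollary \ref{C:consistent}), so that $\sqrt{\mathcal{E}_L}$ is likewise a seminorm and $|\mathcal{E}_L(f)^{1/2}-\mathcal{E}_L(g)^{1/2}|\leq \mathcal{E}_L(f-g)^{1/2}$. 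The decisive point is then to dominate $\mathcal{E}_L$ by $\mathcal{E}$: the bound (\ref{E:Lagrangesupbound}), which holds for $(L,\mathcal{B})$ by \cite[Proposition I.4.1.1]{BH91} as noted in the proof of Theorem \ref{T:Dirichletform}, reads $\mathcal{E}_L(u)=\tfrac12\left\|L_u\right\|_{\mathcal{B}'}\leq \mathcal{E}(u)$ for every $u\in\mathcal{B}$. Combining these inequalities yields $|\mathcal{E}_L(f)^{1/2}-\mathcal{E}_L(g)^{1/2}|\leq \mathcal{E}(f-g)^{1/2}$, which is exactly the $\mathcal{E}$-continuity of $\mathcal{E}_L$.

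With both continuities in hand the conclusion is routine: given $f\in\mathcal{B}$, choose $f_n\in\mathcal{B}_0$ with $\mathcal{E}(f-f_n)\to 0$; then $\mathcal{E}(f_n)\to\mathcal{E}(f)$ and $\mathcal{E}_L(f_n)\to\mathcal{E}_L(f)$, while $\mathcal{E}_L(f_n)=\mathcal{E}(f_n)$ for each $n$ by Corollary \ref{C:consistent}, so $\mathcal{E}_L(f)=\mathcal{E}(f)$. For the ``in particular'' clause I would route through the Lagrangian: by Theorem \ref{T:Dirichletform} the Lagrangian $(L,\mathcal{B})$ is sup-norm-closable and normal contractions operate on it, so Proposition \ref{P:LtoEL} makes the associated energy $(\mathcal{E}_L,\mathcal{B})$ a sup-norm-closable bilinear form; since $\mathcal{E}_L=\mathcal{E}$, this is precisely $(\mathcal{E},\mathcal{B})$, and the operation of normal contractions transfers from $(L,\mathcal{B})$ to $(\mathcal{E}_L,\mathcal{B})=(\mathcal{E},\mathcal{B})$ in the same manner.

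I expect the only genuinely delicate step to be the $\mathcal{E}$-continuity of $\mathcal{E}_L$, that is, the observation that $\sqrt{\mathcal{E}_L}$ is a seminorm dominated by $\sqrt{\mathcal{E}}$; once the domination from (\ref{E:Lagrangesupbound}) is paired with the parallelogram identity for $\mathcal{E}_L$, everything else is a standard density passage. Note in particular that this argument needs the bound (\ref{E:Lagrangesupbound}) on the full space $\mathcal{B}$ rather than merely on $\mathcal{B}_0$, which is why it is important that the estimate is available for all of $\mathcal{B}$ from the Dirichlet form structure.
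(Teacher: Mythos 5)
Your proof is correct and follows essentially the same route the paper intends: the paper states the corollary as an immediate consequence of Corollary \ref{C:consistent} (equality on $\mathcal{B}_0$) together with the preceding lemma ($\mathcal{E}$-density of $\mathcal{B}_0$ in $\mathcal{B}$), leaving the extension step implicit. You have simply supplied the missing continuity details (the seminorm property of $\sqrt{\mathcal{E}_L}$ via the parallelogram identity and the domination $\mathcal{E}_L\leq\mathcal{E}$ from (\ref{E:Lagrangesupbound})), which is exactly what is needed.
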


\section{Energy measures and energy dominance}\label{S:energymeas}

We discuss how to represent the functionals $L_f$ by finitely additive measures. Let $\mathcal{D}$ be an algebra of bounded real valued functions on $X$, having the Stone property and being endowed with the supremum norm. Let $(L,\mathcal{D})$ be a Lagrangian.

We review Hewitt's construction in \cite{Hew51}. Let $\mathcal{S}$ denote the family of all subsets of $X$ of form $\left\lbrace f >0\right\rbrace$, $f\in\mathcal{D}$. We assume that $\mathcal{D}$ contains a strictly positive function $\chi>0$, what implies $X\in\mathcal{S}$.
Set 
\[\mathcal{H}(\mathcal{S}):=\left\lbrace A\subset X: A\subset B \text{ for some } B\in\mathcal{S}\right\rbrace.\]
Note that if $A$ is an element of $\mathcal{H}(\mathcal{S})$ then also any subset of $A$ is an element of $\mathcal{H}(\mathcal{S})$. Given a function $f\in\mathcal{D}$, set
\begin{equation}\label{E:gamma}
\Gamma(f)(B):=\sup\left\lbrace L_f(h): h\in\mathcal{D},\ 0\leq h\leq \mathbf{1}_B\right\rbrace, \ B\in \mathcal{S},
\end{equation}
and for a $A\in\mathcal{H}(\mathcal{S})$ put
\begin{equation}\label{E:gammaast}
\Gamma(f)(A):=\inf\left\lbrace \Gamma(f)(B): B\in\mathcal{S},\ A\subset B\right\rbrace,
\end{equation}
cf. \cite[2.9 Definition]{Hew51}. Formula (\ref{E:gammaast}) defines a $[0,+\infty]$-valued set function $\Gamma(f)$ on $\mathcal{H}(\mathcal{S})$ which is monotone, $\Gamma(f)(A)\leq \Gamma(f)(B)$ if $A\subset B$, and subadditive, $\Gamma(f)(A\cup B)\leq \Gamma(f)(A)+\Gamma(f)(B)$, cf. \cite[2.20 Theorem]{Hew51}.

A set $A\in \mathcal{H}(\mathcal{S})$ is called \emph{$\Gamma(f)$-measurable} if for all $E\in\mathcal{H}(\mathcal{S})$ we have $\Gamma(f)(E)\geq \Gamma(f)(E\cap A)+\Gamma(f)(E\cap A^c)$. Let $\alpha(\mathcal{S})$ denote the algebra of subsets of $X$ generated by $\mathcal{S}$. A standard proof shows that for any $f\in\mathcal{D}$ the family $\mathcal{M}(\Gamma(f))$ is an algebra of subsets of $X$ containing $\alpha(\mathcal{S})$, and $\Gamma(f)$ is finitely additive on $\mathcal{M}(\Gamma(f))$, \cite[2.23 Theorem]{Hew51}.

A corresponding notion of integral has been introduced in 
\cite[2.26 Definition]{Hew51}. Given a nonnegative and bounded $\Gamma(f)$-measurable function $g$ on $X$, set
\[\int_X g(x)\:\Gamma(f)(dx):=\lim_{\left\|\Delta\right\|\to 0}\sum_{i=1}^n \alpha_{i-1}\Gamma(f)(\left\lbrace \alpha_{i-1}<g\leq \alpha_i\right\rbrace),\]
the limit taken along a sequence of partitions $\Delta=\left\lbrace \inf g=\alpha_0<\alpha_1<\alpha_2<\dots <\alpha_n=\sup g\right\rbrace$
with decreasing mesh $\left\|\Delta\right\|:=\max_{i=1,...,n}(\alpha_i-\alpha_{i-1})$. Now if $g\in\mathcal{D}$ is such that 
\begin{equation}\label{E:intneqzero}
\Gamma(f)(\left\lbrace g\neq 0\right\rbrace)<+\infty,
\end{equation} 
then 
\[\int_X g(x)\:\Gamma(f)(dx)=L_f(g),\]
see \cite[2.27 Theorem]{Hew51}. Without condition (\ref{E:intneqzero}) we cannot expect equality in the last formula, \cite[2.29 Remark]{Hew51}. For any $f\in\mathcal{D}$ all functions from $\mathcal{D}$ are $\Gamma(f)$-measurable. This yields the following theorem.

\begin{theorem}\label{T:intrepadd}
Let $(L,\mathcal{D})$ be a Lagrangian and assume that $\mathcal{D}$ contains a strictly positive function $\chi>0$. Then for any $f\in\mathcal{D}$ there exists a finite finitely additive measure $\Gamma(f)$ on $\alpha(\mathcal{S})$ such that 
\begin{equation}\label{E:intrepadd}
L_f(h)=\int_X h\:d\Gamma(f), \ \ h\in\mathcal{D}.
\end{equation}
\end{theorem}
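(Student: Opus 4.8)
The plan is to assemble the Hewitt machinery recalled in the paragraphs preceding the statement, the one additional observation being that the strict positivity of $\chi$ forces $X$ itself into $\mathcal{S}$. This is precisely what makes $\Gamma(f)$ a \emph{finite} set function, which in turn removes the integrability restriction (\ref{E:intneqzero}) and lets the representation hold on all of $\mathcal{D}$.

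First I would establish finiteness. Since $\chi>0$ lies in $\mathcal{D}$, we have $X=\{\chi>0\}\in\mathcal{S}$, so $\Gamma(f)(X)$ is given directly by (\ref{E:gamma}). As $L_f\in\mathcal{D}'$ it is a bounded linear functional, and every $h\in\mathcal{D}$ with $0\leq h\leq\mathbf{1}_X$ satisfies $\left\|h\right\|_{\sup}\leq 1$; hence $\Gamma(f)(X)=\sup\{L_f(h):0\leq h\leq\mathbf{1}\}\leq\left\|L_f\right\|_{\mathcal{D}'}<+\infty$. By monotonicity of $\Gamma(f)$ on $\mathcal{H}(\mathcal{S})$ this gives $\Gamma(f)(A)\leq\Gamma(f)(X)<+\infty$ for every $A\in\mathcal{H}(\mathcal{S})$, so $\Gamma(f)$ is finite. (In the special case where the Lagrangian is generated by a sup-norm-closable form on which normal contractions operate, one may instead quote the sharper bound $\left\|L_f\right\|_{\mathcal{D}'}\leq 2\mathcal{E}(f)$ from Theorem \ref{T:positive}.)

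Next I would invoke Hewitt's measurability theorem \cite[2.23 Theorem]{Hew51}, recalled above, to conclude that $\mathcal{M}(\Gamma(f))$ is an algebra containing $\alpha(\mathcal{S})$ on which $\Gamma(f)$ is finitely additive; restricting to $\alpha(\mathcal{S})$ yields a finite finitely additive measure. For the representation (\ref{E:intrepadd}) I would then apply Hewitt's integration theorem \cite[2.27 Theorem]{Hew51}. The essential point is that for an arbitrary $h\in\mathcal{D}$ the set $\{h\neq 0\}=\{|h|>0\}$ belongs to $\mathcal{S}$, since $|h|\in\mathcal{D}$, and by the finiteness just proved $\Gamma(f)(\{h\neq 0\})\leq\Gamma(f)(X)<+\infty$; thus condition (\ref{E:intneqzero}) holds for \emph{every} $h\in\mathcal{D}$ rather than for a subclass. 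As every element of $\mathcal{D}$ is $\Gamma(f)$-measurable, \cite[2.27 Theorem]{Hew51} then yields $L_f(h)=\int_X h\,d\Gamma(f)$ for all $h\in\mathcal{D}$.

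The only genuinely substantive step is the finiteness argument, as the additivity and the integral identity are afterwards immediate citations. The heart of the matter is that the hypothesis $\chi>0$ is used exactly to place $X$ inside $\mathcal{S}$, so that the total mass $\Gamma(f)(X)$ is controlled by $\left\|L_f\right\|_{\mathcal{D}'}$ and (\ref{E:intneqzero}) becomes automatic; I do not anticipate any serious obstacle beyond correctly identifying this role of $\chi$.
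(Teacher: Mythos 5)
Your proposal is correct and follows essentially the same route as the paper: the only substantive step is the finiteness of $\Gamma(f)$, which the paper likewise obtains from $X=\left\lbrace \chi>0\right\rbrace\in\mathcal{S}$ and $\Gamma(f)(X)=\sup\left\lbrace L_f(h): h\in\mathcal{D},\ 0\leq h\leq \mathbf{1}\right\rbrace=\left\|L_f\right\|_{\mathcal{D}'}<+\infty$, the rest being the cited Hewitt theorems. Your additional remarks on why (\ref{E:intneqzero}) becomes automatic match the paper's own comment immediately following the theorem statement.
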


The existence of such $\chi>0$ ensures that all $\Gamma(f)$, $f\in\mathcal{D}$, are finite so that (\ref{E:intneqzero}) is trivially satisfied and also the polarization below makes sense.

\begin{proof}
It suffices to verify that $\Gamma(f)$ is finite. Since $X=\left\lbrace \chi>0\right\rbrace$ we have
\[\Gamma(f)(X)=\sup\left\lbrace L_f(h): h\in\mathcal{D},\ 0\leq h\leq \mathbf{1}\right\rbrace= \left\|L_f\right\|_{\mathcal{D}'}<+\infty.\]
\end{proof}

For any $f,g\in\mathcal{D}$ we can define a finite signed finitely additive measure  $\Gamma(f,g)$ by polarization,
\[\Gamma(f,g)(A):=\frac14\left(\Gamma(f+g)(A)-
\Gamma(f-g)(A)\right),\ \ A\in\alpha(\mathcal{S}).\]
Similarly as for $L_{f,g}$ bilinearity and Cauchy-Schwarz imply 
\begin{equation}\label{E:contmeasure} 
|\Gamma(f)(A)^{1/2}-\Gamma(g)(A)^{1/2}|\leq \Gamma(f-g)(A)^{1/2},\ \ A\in\alpha(\mathcal{S}),
\end{equation}
for any $f,g\in\mathcal{D}$. From (\ref{E:gamma}) and (\ref{E:gammaast}) together with Corollary \ref{C:functpos} we obtain the estimate $\Gamma(f)(A)\leq 2\mathcal{E}_L(f)$,  
 $A\in\alpha(\mathcal{S})$. Now recall Definition \ref{D:energysep}.

\begin{remark}\label{R:energysep}\mbox{}
\begin{enumerate} 
\item[(i)] If there exists a countable set of functions $\left\lbrace f_n\right\rbrace_n\subset \mathcal{D}$ with span $\mathcal{E}_L$-dense in $\mathcal{D}$, then by (\ref{E:Lagrangesupbound}) and (\ref{E:contmeasure}) the Lagrangian $(L,\mathcal{D})$ is energy separable.
\item[(ii)] If $(X,\mathcal{X},\mu)$ is a $\sigma$-finite measure space and $\mathcal{X}$ is generated by a countable semiring, then $L_2(X,\mu)$ is separable, and if $(\mathcal{E},\mathcal{F})$ is a Dirichlet form on $L_2(X,\mu)$, then there is a countable set  of functions $\left\lbrace f_n\right\rbrace_n\subset \mathcal{B}$ that is $\mathcal{E}$-dense in $\mathcal{B}$, hence the bilinear form $(\mathcal{E},\mathcal{B})$ is energy separable. For instance, see the proof of \cite[Theorem 1.4.2 (iii)]{FOT94}.
\end{enumerate}
\end{remark}

In the energy separable case the existence of a finite energy dominant measure and the countable additivity of the set functions $\Gamma(f)$, $f\in\mathcal{D}$, are equivalent.

\begin{theorem}\label{T:energydom}
Let $(L,\mathcal{D})$ be a Lagrangian and assume $\mathcal{D}$ contains a strictly positive function $\chi>0$.
\begin{enumerate}
\item[(i)] If there exists a finite energy dominant measure $m$ for $(L,\mathcal{D})$ then all $\Gamma(f)$, $f\in\mathcal{D}$, are measures on $\alpha(\mathcal{S})$.
\item[(ii)] If $(L,\mathcal{D})$ is energy separable and all $\Gamma(f_n)$ in (\ref{E:standardsum}) are measures  on $\alpha(\mathcal{S})$, then (\ref{E:standardsum}) is a finite energy dominant measure for $(L,\mathcal{D})$.
\end{enumerate}
\end{theorem}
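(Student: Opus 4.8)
The plan is to treat the two parts separately; both rest on the interplay between the $\varepsilon$--$\delta$ notion of absolute continuity recalled before the theorem and the countable additivity of the dominating object, while finite additivity of each $\Gamma(f)$ on $\alpha(\mathcal{S})$ is already supplied by Theorem \ref{T:intrepadd}.

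For (i) I would fix $f\in\mathcal{D}$ together with pairwise disjoint sets $A_1,A_2,\dots\in\alpha(\mathcal{S})$ whose union $B:=\bigcup_i A_i$ again lies in $\alpha(\mathcal{S})$, and write $B_n:=\bigcup_{i=1}^n A_i$; the goal is $\Gamma(f)(B)=\sum_i\Gamma(f)(A_i)$. Since $\Gamma(f)$ is finitely additive, one has the exact splitting $\Gamma(f)(B)=\Gamma(f)(B_n)+\Gamma(f)(B\setminus B_n)$ with $\Gamma(f)(B_n)=\sum_{i=1}^n\Gamma(f)(A_i)$, so everything reduces to showing $\Gamma(f)(B\setminus B_n)\to 0$. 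Here both hypotheses enter: because $m$ is a finite measure, $m(B)=\sum_i m(A_i)<\infty$, whence the tails $m(B\setminus B_n)=\sum_{i>n}m(A_i)\to 0$; and because $\Gamma(f)\ll m$ in the $\varepsilon$--$\delta$ sense, given $\varepsilon>0$ we may choose $\delta>0$ so that $m(B\setminus B_n)<\delta$ forces $\Gamma(f)(B\setminus B_n)<\varepsilon$ for all large $n$. Letting $n\to\infty$ and then $\varepsilon\to 0$ yields $\Gamma(f)(B)=\sum_i\Gamma(f)(A_i)$, i.e. $\Gamma(f)$ is countably additive.

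For (ii) I would write $m$ for the set function (\ref{E:standardsum}) and $m_n:=2^{-n}(1+\Gamma(f_n)(X))^{-1}\Gamma(f_n)$ for its summands. Finiteness is immediate from $t/(1+t)\le 1$, giving $m(X)\le\sum_n 2^{-n}=1$. For countable additivity I would take disjoint $A_1,A_2,\dots\in\alpha(\mathcal{S})$ with union $A\in\alpha(\mathcal{S})$ and compute $m(A)=\sum_n m_n(A)=\sum_n\sum_k m_n(A_k)$, using that each $m_n$ is a nonnegative measure by hypothesis, and then interchange the two sums by Tonelli for nonnegative double series to get $\sum_k\sum_n m_n(A_k)=\sum_k m(A_k)$. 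Thus $m$ is a finite measure on $\alpha(\mathcal{S})$. Energy dominance is then not a further computation: energy separability is precisely the assertion that every $\Gamma(f)$, $f\in\mathcal{D}$, is absolutely continuous with respect to the set function (\ref{E:standardsum}), which is $m$; now that $m$ is known to be a genuine measure, this is exactly the definition of $m$ being energy dominant.

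I do not anticipate a serious obstacle. The one point demanding care in (i) is that one must invoke the $\varepsilon$--$\delta$ formulation of absolute continuity rather than the weaker null-set formulation, since for finitely additive measures the two are not equivalent, and it is precisely the $\varepsilon$--$\delta$ version that converts the vanishing of the $m$-tails into the vanishing of the $\Gamma(f)$-tails. In (ii) the only subtlety is the legitimacy of exchanging the order of summation, which is automatic from nonnegativity.
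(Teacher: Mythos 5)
Your proof is correct and follows exactly the standard route the paper relies on: the paper disposes of (i) by citing \cite[Theorem 6.1.11]{BRBR} and declares (ii) trivial, and your argument is precisely the spelled-out content of that citation (the $\varepsilon$--$\delta$ form of absolute continuity turning the vanishing $m$-tails into vanishing $\Gamma(f)$-tails) together with the routine Tonelli interchange for (ii). Your remark that the $\varepsilon$--$\delta$ formulation, rather than the null-set one, is the version needed here is exactly the right point of care.
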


\begin{proof}
(i) holds because if $m$ is a finite energy dominant measure then any $\Gamma(f)$ is a measure, see for instance \cite[Theorem 6.1.11]{BRBR}. (ii) is trivial.  
\end{proof}

\begin{remark}\label{R:lcccase}\mbox{}
\begin{enumerate}
\item[(i)] If $X$ is a locally compact Hausdorff space and $\mathcal{D}$ is a dense subspace of the space $C_c(X)$ of continuous compactly supported functions on $X$, then $\sigma(\mathcal{D})$ is the Borel-$\sigma$-algebra on $X$ and all $\Gamma(f)$, $f\in\mathcal{D}$, are finite Radon measures on $X$. If in addition $(L,\mathcal{D})$ is  energy separable, then (\ref{E:standardsum}) defines a finite Radon measure $m$ on $X$ that is energy dominant for $(L,\mathcal{D})$.
\item[(ii)] If $(\mathcal{E},\mathcal{F})$ is a regular Dirichlet form with core $\mathcal{C}$, then the energy measures $\Gamma(f)$, $f\in\mathcal{C}$, of the bilinear form $(\mathcal{E},\mathcal{C})$ coincide with the energy measures as introduced by Silverstein \cite{S74}, LeJan \cite{LJ78} and Fukushima \cite{FOT94}.
\end{enumerate}
\end{remark}

Carath\'eodory's extension theorem and the Radon-Nikodym theorem imply the following.

\begin{corollary}\label{C:dominant}
Let $(L,\mathcal{D})$ be a Lagrangian and assume $\mathcal{D}$ contains a strictly positive function $\chi>0$. If there exists a finite energy dominant measure $m$ for $(L,\mathcal{D})$ then $m$ and all $\Gamma(f)$, $f\in \mathcal{D}$, extend uniquely to finite measures $m$ and $\Gamma(f)$ on the $\sigma$-algebra $\sigma(\mathcal{S})=\sigma(\mathcal{D})$ generated by $\mathcal{D}$. The measures $\Gamma(f)$, $f\in\mathcal{D}$ are absolutely continuous with respect to $m$ on $\sigma(\mathcal{D})$ and admit $m$-integrable densities
\[\Gamma(f):=\frac{d\Gamma(f)}{dm},\ \ f\in\mathcal{D}.\]
\end{corollary}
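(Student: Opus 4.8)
The plan is to combine the two preceding theorems of this section with the two classical theorems named in the statement, Carath\'eodory's extension theorem and the Radon--Nikodym theorem. First I would record that, thanks to the assumed existence of a finite energy dominant measure $m$, Theorem \ref{T:energydom}(i) guarantees that each set function $\Gamma(f)$, $f\in\mathcal{D}$, produced in Theorem \ref{T:intrepadd} is already countably additive on the algebra $\alpha(\mathcal{S})$. Together with $m$ we thus have a family of finite (countably additive) measures on $\alpha(\mathcal{S})$, and Carath\'eodory's theorem, in the finite-measure form recalled in Section \ref{S:main}, extends each of them uniquely to a finite measure on $\sigma(\mathcal{S})$. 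For the identification $\sigma(\mathcal{S})=\sigma(\mathcal{D})$ I would note that $\left\lbrace f>0\right\rbrace\in\sigma(\mathcal{D})$ trivially, while conversely the Stone property yields $(f^+-c)^+=f^+-T_c(f^+)\in\mathcal{D}$ for every $c\geq 0$, so that $\left\lbrace f>c\right\rbrace=\left\lbrace (f^+-c)^+>0\right\rbrace\in\mathcal{S}$; level sets for negative $c$ then follow by complementation and countable intersection, using $X=\left\lbrace \chi>0\right\rbrace\in\mathcal{S}$. Hence $\mathcal{D}$ and $\mathcal{S}$ generate the same $\sigma$-algebra.

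The one step that requires genuine care --- and the main obstacle --- is to transfer the absolute continuity $\Gamma(f)\ll m$ from the algebra $\alpha(\mathcal{S})$ to the generated $\sigma$-algebra, since this is not automatic under the extension. Here I would exploit the uniform ($\varepsilon$--$\delta$) formulation of absolute continuity: by energy dominance, for every $\varepsilon>0$ there is $\delta>0$ with $\Gamma(f)(B)<\varepsilon$ whenever $B\in\alpha(\mathcal{S})$ satisfies $m(B)<\delta$. Fix $A\in\sigma(\mathcal{S})$ with $m(A)=0$. Applying the standard approximation lemma for the finite measure $m+\Gamma(f)$ on $\sigma(\mathcal{S})$ (the sets approximable in $(m+\Gamma(f))$-measure by members of $\alpha(\mathcal{S})$ form a $\sigma$-algebra containing $\alpha(\mathcal{S})$, hence all of $\sigma(\mathcal{S})$), I can choose, for arbitrary $\eta>0$, a set $B\in\alpha(\mathcal{S})$ with $(m+\Gamma(f))(A\triangle B)<\eta$. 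Taking $\eta<\min(\delta,\varepsilon)$ gives $m(B)\leq m(A)+\eta<\delta$, hence $\Gamma(f)(B)<\varepsilon$, and therefore $\Gamma(f)(A)\leq\Gamma(f)(B)+\eta<2\varepsilon$. Letting $\varepsilon\to 0$ shows $\Gamma(f)(A)=0$, that is, $\Gamma(f)\ll m$ on $\sigma(\mathcal{S})$. (Alternatively, this transfer may simply be cited from \cite[Theorem 6.1.11]{BRBR}, as was already done in the proof of Theorem \ref{T:energydom}(i).)

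With both measures finite and the absolute continuity now established on $\sigma(\mathcal{S})=\sigma(\mathcal{D})$, the Radon--Nikodym theorem finally furnishes, for each $f\in\mathcal{D}$, an $m$-integrable density $d\Gamma(f)/dm$, which is exactly the assertion of the corollary. I expect the bookkeeping of paragraphs one and three to be entirely routine, the genuine content residing solely in the $\varepsilon$--$\delta$ argument of paragraph two.
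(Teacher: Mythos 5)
Your proof is correct and follows exactly the route the paper intends: the paper offers no written proof beyond the remark that Carath\'eodory's extension theorem and the Radon--Nikodym theorem imply the corollary, and your argument supplies precisely those steps, including the one genuinely non-routine point (transferring $\Gamma(f)\ll m$ from $\alpha(\mathcal{S})$ to $\sigma(\mathcal{S})$ via approximation in $(m+\Gamma(f))$-measure) and the identification $\sigma(\mathcal{S})=\sigma(\mathcal{D})$ via the Stone property.
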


\section{Sup-norm closability implies $L_2$-closability}\label{S:clos}

This section contains the proof of Theorem \ref{T:clos}, another version of this theorem and some auxiliary facts.

We first observe that $\mathcal{D}$ is dense in $L_2(X,m)$. 

\begin{lemma}
Let $m$ be a finite measure on $\sigma(\mathcal{D})$. Then $\mathcal{D}$ is dense in $L_2(X,m)$.
\end{lemma}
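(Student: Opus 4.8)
The plan is to deduce density from the functional monotone class theorem (multiplicative system theorem), applied to the algebra $\mathcal{D}$ itself. First I would make the standard reduction: since $m$ is finite, every $L_2(X,m)$-function can be approximated in $L_2$ by bounded $\sigma(\mathcal{D})$-measurable functions (truncate and apply dominated convergence), so it suffices to show that the $m$-equivalence class of each \emph{bounded} $\sigma(\mathcal{D})$-measurable function lies in $\overline{\mathcal{D}}$, the closure of $\mathcal{D}$ in $L_2(X,m)$. Because $\overline{\mathcal{D}}$ is closed, this will give $\overline{\mathcal{D}}=L_2(X,m)$.

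The one genuinely necessary preliminary is to produce the constant $\mathbf{1}$ inside $\overline{\mathcal{D}}$, and this is exactly where the strictly positive $\chi\in\mathcal{D}$ and the Stone property enter. Indeed, for every $n$ the function $T_1(n\chi)=(n\chi)\wedge 1$ belongs to $\mathcal{D}$, is bounded by $1$, and, since $\chi>0$ everywhere on $X$, increases pointwise to $\mathbf{1}$; as $m$ is finite, dominated convergence yields $T_1(n\chi)\to\mathbf{1}$ in $L_2(X,m)$, so $\mathbf{1}\in\overline{\mathcal{D}}$.

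Next I would introduce
\[
\mathcal{H}:=\big\{\,g:\ g\ \text{is bounded and $\sigma(\mathcal{D})$-measurable, and its $m$-class lies in $\overline{\mathcal{D}}$}\,\big\}
\]
and check the hypotheses of the multiplicative system theorem. The set $\mathcal{H}$ is a vector space (finite linear combinations preserve boundedness, measurability, and membership in the closed subspace $\overline{\mathcal{D}}$); it contains the constants by the previous step; and it contains the multiplicative system $\mathcal{D}$, where one uses that $\mathcal{D}$ is an \emph{algebra}, hence closed under products. Closure of $\mathcal{H}$ under bounded monotone pointwise limits follows once more from dominated convergence: if $h_n\in\mathcal{H}$ increase to a bounded $h$, then $h_n\to h$ in $L_2(X,m)$, so $h\in\mathcal{H}$. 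Since $\mathcal{D}$ generates $\sigma(\mathcal{D})=\sigma(\mathcal{S})$, the theorem forces $\mathcal{H}$ to contain every bounded $\sigma(\mathcal{D})$-measurable function, and combined with the first reduction this proves the lemma.

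I do not expect a serious obstacle here; the only points requiring a little care are the routine equivalence-class bookkeeping in the monotone class step and the verification that $\mathbf{1}\in\overline{\mathcal{D}}$, for which the existence of $\chi>0$ together with the Stone property is precisely what is needed. If one prefers to avoid citing the functional monotone class theorem, the same conclusion follows by checking directly that $\{A\in\sigma(\mathcal{D}):\mathbf{1}_A\in\overline{\mathcal{D}}\}$ is a $\sigma$-algebra containing $\mathcal{S}$ and then invoking density of simple functions in $L_2(X,m)$.
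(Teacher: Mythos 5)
Your argument is correct and follows essentially the same route as the paper's: a monotone class argument whose only substantive inputs are that $\mathcal{D}$ is an algebra generating $\sigma(\mathcal{D})$, that $\mathbf{1}$ is reached from the strictly positive $\chi$ via the Stone property, and that $m$ is finite so dominated convergence applies. The paper runs the monotone class theorem on the span of pointwise increasing limits of $\mathcal{D}^+$-functions (which costs a small diagonal argument) and only afterwards passes to $L_2$, whereas you apply the multiplicative system theorem directly to the set of bounded measurable functions whose class lies in the $L_2$-closure of $\mathcal{D}$ --- a mild streamlining, since the closedness of that subspace makes stability under bounded monotone limits immediate.
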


For any $k\geq 1$ set $A_k:=\left\lbrace \chi>\frac1k\right\rbrace$. Then $A_k\subset A_{k+1}$ for all $k$ and $\chi=\bigcup_{k=1}^\infty A_k$. Setting 
\begin{equation}\label{E:chik}
\chi_k:=k(k+1)\left(\chi\wedge\frac1k-\chi\wedge\frac{1}{k+1}\right), \ \ k\geq 1,
\end{equation}
we obtain functions $\chi_k\in\mathcal{D}^+_0$ such that $0\leq \chi_k\leq 1$, $\chi_k=1$ on $A_k$ and $\chi_k=0$ outside $A_{k+1}$.

\begin{proof} It suffices to prove that any bounded and $\sigma(\mathcal{D})$-measurable function can be approximated in $L_2(X,m)$ by a sequence of functions from $\mathcal{D}$. Consider the cone
\begin{multline}
\mathcal{K}:=\left\lbrace f:X\to [0,\infty) : \text{ $f$ is bounded and the (pointwise) limit}\right.\notag\\
\left.\text{of an increasing sequence $(f_n)_n\subset\mathcal{D}^+$}\right\rbrace.
\end{multline} 
The vector space $\mathcal{H}:=\lin (\mathcal{K})$ generated by $\mathcal{K}$ is a monotone vector system containing $\mathcal{D}$, cf. \cite[Appendix A0.]{Sh88}. In fact, we have $0\leq \chi_1\leq \chi_2\leq \dots$ and $\lim_k\chi_k=\mathbf{1}$ pointwise for the functions $\chi_k$ defined in (\ref{E:chik}). This implies $\mathbf{1}\in\mathcal{K}$. If $f$ is a bounded function on $X$ and there exists a sequence $(f_n)\subset \mathcal{K}$ that monotonically increases to $f$, then $f$ is a member of $\mathcal{K}$: For fixed $n$ let $(\varphi^{(n)}_k)_k$ be a sequence from $\mathcal{D}^+$ that monotonically increases to $f_n$. Let $k(n)$ be such that $f_n-\varphi^{(n)}_{k(n)}<2^{-n}$ and put $\psi_n:=\varphi^{(n)}_{k(n)}$. Then $\lim_n \psi_n(x)=f(x)$ and $\psi_n(x)\leq f(x)$ for all $x\in X$. Setting $g_n:=\max_{k\leq n}\psi_k$ we obtain a sequence $(g_n)_n\subset\mathcal{D}^+$ that monotonically increases to $f$, hence $f\in\mathcal{K}$. By the monotone class theorem the vector space $\mathcal{H}$ therefore contains all bounded $\sigma(\mathcal{D})$-measurable functions, see for instance \cite[Theorem A06]{Sh88}, what implies the desired approximation property.
\end{proof}

\begin{remark}
Every monotone vector system is closed under uniform convergence, \cite[Appendix A]{Sh88}. By Lemma \ref{L:D0dense} we could therefore replace $\mathcal{D}^+$ by $\mathcal{D}_0^+$ in the definition of $\mathcal{K}$ and conclude that $\sigma(\mathcal{D})=\sigma(\mathcal{D}_0)$ and that $\mathcal{D}_0$ is dense in $L_2(X,m)$. As far as closability is concerned, we could then consider $(\mathcal{E},\mathcal{D}_0)$ in place of $(\mathcal{E},\mathcal{D})$ in Theorem \ref{T:clos}.
\end{remark}

In the sequel we will show that for any finite energy dominant measure $m$ the form $(\mathcal{E},\mathcal{D})$ is closable in $L_2(X,m)$. We consider related bilinear forms on the real line. Given $f\in\mathcal{D}$ let $I^f\subset\mathbb{R}$ be a bounded open interval such that $[-\left\|f\right\|_{\sup},\left\|f\right\|_{\sup}]\subset I^f$ and write $I^f_0:=I^f\setminus\left\lbrace 0\right\rbrace$. By $\lip_0(I^f)$ we denote the space of Lipschitz functions $F$ on $I^f$ such that $F(0)=0$. For fixed $f\in\mathcal{D}$ set
\begin{equation}\label{E:coordform}
E^f(F,G):=\mathcal{E}(F(f),G(f))
\end{equation}
for any $F,G\in \lip_0(I^f)$. Obviously $E^f$ is a nonnegative definite symmetric bilinear form on $\lip_0(I^f)$, and normal contractions operate. We use the notation $\diag:=\left\lbrace (x,x):x\in\mathbb{R}_0\right\rbrace$ and write $C^1_c(I^f_0)$ for the space of compactly supported continuously differentiable functions on $I_0^f$. By \cite[Corollary 2.5]{And75} the form $E^f$ admits a Beurling-Deny decomposition: There are a symmetric nonnegative Radon measure $J^f$ on $I_0^f\times I_0^f\setminus \diag$ and nonnegative Radon measures $\sigma^f$ and $\kappa^f$ on $I_0^f$ such that for all $F\in C_c^1(I_0^f)$ we have 
\begin{equation}\label{E:BD}
E^f(F)=\int F'^2d\sigma^f+\frac12\int\int(F(x)-F(y))^2J^f(dxdy)+\int F^2d\kappa^f.\end{equation}

For any $F,H \in C_c^1(I_0^f)$ we therefore have
\begin{equation}\label{E:imagemeas}
L_{F(f)}(H(f))=2\int H F'^2d\sigma^f+\int H(x)\int(F(x)-F(y))^2J^f(dxdy)+\int H F^2d\kappa^f.
\end{equation}

Now let $\varepsilon>0$ be such that 
\[K:=[-\left\|f\right\|_{\sup}-\varepsilon, \left\|f\right\|_{\sup}+\varepsilon]\subset I^f.\]
The next lemma can be shown in a similar manner as Lemma \ref{L:supports}, therefore we omit its proof. 

\begin{lemma}\label{L:compactsupponedim}
For any $\varepsilon>0$ the compact interval $K$ contains the supports of $\sigma^f$ and $\kappa^f$, and the support of $J^f$ is contained in $K\times K$. 
\end{lemma}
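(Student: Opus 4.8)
The plan is to mirror the proof of Lemma \ref{L:supports}, using the one-dimensional Beurling--Deny decomposition (\ref{E:BD}) in place of its two-dimensional analogue. The single fact driving the argument is that the range of $f$ is contained in $[-\left\|f\right\|_{\sup},\left\|f\right\|_{\sup}]$, which lies in the interior of $K$. Hence, whenever $F\in C_c^1(I_0^f)$ satisfies $\supp F\subset I^f\setminus K$, the composition $F(f)$ is the zero function on $X$, so that $E^f(F,G)=\mathcal{E}(F(f),G(f))=0$ for every $G\in\lip_0(I^f)$. Note that $I^f\setminus K\subset I_0^f$, since $K$ contains the neighborhood $[-\varepsilon,\varepsilon]$ of the origin; thus all test functions supported in $I^f\setminus K$ automatically lie in $C_c^1(I_0^f)$.

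First I would treat $\kappa^f$ and $\sigma^f$. Fix $F\in C_c^1(I_0^f)$ with $\supp F\subset I^f\setminus K$. Then $E^f(F)=0$, and since the three terms in (\ref{E:BD}) are nonnegative, each of them vanishes; in particular $\int F^2\,d\kappa^f=0$ and $\int F'^2\,d\sigma^f=0$. Letting $F$ range over all such functions gives $\kappa^f(I^f\setminus K)=0$ at once. For $\sigma^f$ I would choose, on each of the two components of the open set $I^f\setminus K$ and for a prescribed compact $C$ in that component, a function $F$ with $|F'|\geq 1$ on $C$; then $\sigma^f(C)\leq\int F'^2\,d\sigma^f=0$, and exhausting $I^f\setminus K$ by such compacta yields $\sigma^f(I^f\setminus K)=0$. (Alternatively one may transcribe the Andersson identity (\ref{E:Anderssontrick}) to the present local part, exactly as in Lemma \ref{L:supports}.) This shows $\supp\sigma^f,\supp\kappa^f\subset K$.

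Next I would handle $J^f$ just as in Lemma \ref{L:supports}. Let $U,V$ be disjoint open subsets of $I_0^f$ with $U\subset I^f\setminus K$, and take $F\in C_c^1(U)$, $G\in C_c^1(V)$. Since $F(f)=0$ we have $E^f(F,G)=0$, while the local and killing parts of $E^f(F,G)$ vanish because $\supp F$ and $\supp G$ are disjoint. Polarizing (\ref{E:BD}) and using the symmetry of $J^f$, the jump part then reduces to $-\int\int F(x)G(y)\,J^f(dx\,dy)$, so this integral vanishes for all such $F,G$, forcing $J^f(U\times V)=0$. Taking both $U,V\subset I^f\setminus K$ shows that $J^f$ vanishes off the diagonal on $(I^f\setminus K)\times(I^f\setminus K)$, and taking $V\subset K$ (still disjoint from $U$) shows it vanishes on $(I^f\setminus K)\times K$; by symmetry it also vanishes on $K\times(I^f\setminus K)$. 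Consequently $\supp J^f\subset K\times K$.

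I expect no genuine obstacle here, which is precisely why the statement may be left to the reader. The only routine steps are the two standard approximation arguments that pass from ``the integral against every admissible test function vanishes'' to ``the measure of the open set vanishes'', and these are identical to the ones already carried out in Lemma \ref{L:supports}. The single point requiring care is that every test function remains in $C_c^1(I_0^f)$, which is guaranteed since $I^f\setminus K$ stays bounded away from $0$.
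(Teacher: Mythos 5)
Your proposal is correct and follows exactly the route the paper intends: the paper omits this proof, pointing to Lemma \ref{L:supports}, and your argument is the one-dimensional transcription of that proof, with the observation that $F(f)\equiv 0$ whenever $\supp F\subset I^f\setminus K$ doing all the work. The only (harmless) simplification is that in one dimension the local part already comes with the representation $\int F'^2\,d\sigma^f$, so your direct ramp-function argument replaces the Andersson identity (\ref{E:Anderssontrick}) needed in the two-dimensional case.
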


We extend (\ref{E:BD}) and (\ref{E:imagemeas}) to more general functions.

\begin{lemma}\label{L:imagemeas}
Let $K$ be as in the preceding lemma. Formulas (\ref{E:BD}) and (\ref{E:imagemeas}) remain valid for functions $F,H\in C^1(I^f)$ with $F(0)=H(0)=0$ and $F'\geq 0$ on $K$.
\end{lemma}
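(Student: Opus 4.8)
The plan is to rerun the approximation scheme from the proof of Lemma \ref{L:mainstatement}, now in one variable and with a test function $H$ of arbitrary sign. By Lemma \ref{L:compactsupponedim} the measures $\sigma^f$, $\kappa^f$ and $J^f$ are supported in $K$ (resp. $K\times K$), and $f$ takes values only in $[-\left\|f\right\|_{\sup},\left\|f\right\|_{\sup}]\subset\mathrm{int}\,K$. Hence I may first multiply $F$ and $H$ by a cut-off $\theta\in C^1(I^f)$ that is $1$ on $K$ and compactly supported in $I^f$: this alters neither the integrals in (\ref{E:BD}) and (\ref{E:imagemeas}) nor the functions $F(f)$, $H(f)$, so I may assume $F,H$ compactly supported in $I^f$ (and then extended by $0$ to Lipschitz functions on $\mathbb{R}$). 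With $\varphi_n$ as in the proof of Lemma \ref{L:mainstatement} I set $F_n\in C^1_c(I^f_0)$ by (\ref{E:Fn}); since $F'\geq 0$ on $K$ and $\varphi_n\uparrow 1$, one has $F_n'=\varphi_n F'\uparrow F'$, and a direct check using $F(0)=0$ gives $F_n^2\uparrow F^2$ and $(F_n(x)-F_n(y))^2\uparrow (F(x)-F(y))^2$ on $K$.

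I would treat $F$ first, keeping $H\in C^1_c(I^f_0)$. By monotone convergence each term on the right of (\ref{E:BD}) for $F_n$ increases to the corresponding term for $F$; writing $\widetilde{E}^f(F)$ for that right-hand side, the normal-contraction bound $\mathcal{E}(F_n(f))\leq \left\|F'\right\|_{\sup}^2\,\mathcal{E}(f)$ shows $\widetilde{E}^f(F)\leq \left\|F'\right\|_{\sup}^2\,\mathcal{E}(f)<+\infty$, so $\int F'^2\,d\sigma^f$, $\int F^2\,d\kappa^f$ and $\int\!\int(F(x)-F(y))^2 J^f(dxdy)$ are all finite. Consequently $\widetilde{E}^f(F-F_n)\to 0$ by dominated convergence against these finite integrands, and since $\widetilde{E}^f$ is a nonnegative quadratic form agreeing with $E^f$ on $C^1_c(I^f_0)$, the sequence $(F_n(f))_n$ is $\mathcal{E}$-Cauchy. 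As $F_n(f)\to F(f)$ uniformly on $X$ and $F(f)\in\mathcal{D}$ (because $F$ is Lipschitz with $F(0)=0$ and $\mathcal{D}$ is stable under normal contractions), sup-norm-closability gives $\mathcal{E}(F(f)-F_n(f))\to 0$, whence $\mathcal{E}(F(f))=\lim_n\mathcal{E}(F_n(f))=\widetilde{E}^f(F)$, which is (\ref{E:BD}). For (\ref{E:imagemeas}) I pass to the limit in $L_{F_n(f)}(H(f))$: the left side converges by (\ref{E:bilinest}) and (\ref{E:Lagrangesupbound}) together with $\mathcal{E}(F(f)-F_n(f))\to 0$, while each term on the right converges by dominated convergence, since on $\supp H$ the integrands are bounded by $\left\|H\right\|_{\sup}$ times the (now finite) energy integrands.

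It then remains to remove the restriction on $H$. Fixing $F$, I note that the weighted measures $F'^2\,d\sigma^f$, $F^2\,d\kappa^f$ and $(F(x)-F(y))^2 J^f$ have finite total mass by the previous step. I approximate a general $H\in C^1(I^f)$ with $H(0)=0$ by $H_m\in C^1_c(I^f_0)$ obtained by cutting $H$ off near the origin and near the boundary; continuity of $H$ and $H(0)=0$ yield $\left\|H-H_m\right\|_{\sup}\to 0$. Then $L_{F(f)}(H_m(f))\to L_{F(f)}(H(f))$ because $L_{F(f)}$ is bounded by (\ref{E:Lagrangesupbound}), and each term on the right of (\ref{E:imagemeas}) converges because uniform convergence against a finite measure suffices. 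This establishes (\ref{E:imagemeas}) in full, (\ref{E:BD}) having already been obtained.

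The main obstacle is that $\sigma^f$, $\kappa^f$ and $J^f$ are only Radon on $I^f_0$ and may carry unbounded mass accumulating at the origin, so the integrals in (\ref{E:BD}) and (\ref{E:imagemeas}) are not a priori finite for functions failing to vanish near $0$. The two hypotheses $F(0)=0$ and $F'\geq 0$ on $K$ are precisely what make the monotone- and dominated-convergence arguments in the $F$-variable control this near-origin behaviour; once $F$ is fixed the relevant weighted measures are finite, and the extension in the sign-indefinite test function $H$ becomes routine.
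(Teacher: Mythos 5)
Your argument is correct and follows essentially the same route as the paper: the truncation $F_n$ from (\ref{E:Fn}), monotone convergence to identify $\widetilde{E}^f(F)$, the normal-contraction bound to see it is finite, dominated convergence to get $\mathcal{E}$-Cauchyness, and sup-norm-closability plus (\ref{E:Lagrangesupbound}) to pass to the limit in (\ref{E:BD}) and (\ref{E:imagemeas}). Your explicit two-step treatment of $H$ (first $H\in C^1_c(I^f_0)$, then a sup-norm cut-off approximation) is only a slightly more detailed rendering of the paper's one-line closing argument, not a different method.
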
  

\begin{proof} We basically proceed as in Lemma \ref{L:mainstatement}.
By Lemma \ref{L:compactsupponedim} we may assume $F$ has compact support in $I^f$. As before, consider the sequence $(F_n)_n\subset C_c^ 1(I_0^ f)$ defined by (\ref{E:Fn}), i.e.
\[F_n(x):=\int_0^x \varphi_n(t)F'(t)dt-\int_x^0\varphi_n(t)F'(t)dt,\]
where $\varphi_n$ is the continuous function that vanishes in $(-\frac1n,\frac1n)$, equals one outside $(-\frac2n,\frac2n)$ and is linear in between. Then we have $\lim_n F_n'=F'$ and $\lim_n F_n=F$, both monotonically, and consequently 
\[\lim_n \int F_n'^2\:d\sigma^f=\int F'^2\:d\sigma^f\ \ \text{ and }\ \ \lim_n \int F_n^2\:d\kappa^f=\int F'^2\:d\kappa^f\]
and also
\begin{align}
\lim_n \int\int(F_n(x)-F_n(y))^2J^f(dxdy)&=2\lim_n\int\int_{\left\lbrace x<y\right\rbrace}\left(\int_x^y\varphi_n(t)F'(t)dt\right)^2J^f(dxdy)\notag\\
&=\int\int(F(x)-F(y))^2J^f(dxdy).\notag
\end{align}
If $\widetilde{E}^f(F)$ denotes the right-hand side of (\ref{E:BD}) this again shows $\widetilde{E}^f(F)<+\infty$ such that $\lim_n \widetilde{E}^f(F-F_n)=0$ by dominated convergence. Since $\lim_n F_n=F$ uniformly by bounded convergence, the sup-norm-closability of $(\mathcal{E},\mathcal{D})$ yields $E^f(F)=\lim_n E^f(F_n)=\lim_n\widetilde{E}^f(F_n)=\widetilde{E}^f(F)$. If for $H\in C^1(I^f)$ with $H(0)=0$ we now define $\widetilde{L}_{F(f)}(H(f))$ to be the right-hand side of (\ref{E:imagemeas}), then obviously $\widetilde{L}_{F(f)}(H(f))\leq 2\left\|H\right\|_{\sup}\:\widetilde{E}^f(F)$, and using (\ref{E:Lagrangesupbound}), we see that $L_{F(f)}(H(f))=\widetilde{L}_{F(f)}(H(f))$.
\end{proof}

Given a general function $F\in C^1(\mathbb{R})$ that vanishes at zero, let $\widetilde{F}\in C^1(I^f)$ be a compactly supported $C^1$-extension of the restriction of $F$ to $K$ and set $L_{F(f)}:=L_{\widetilde{F}(f)}$. By formula (\ref{E:imagemeas}) and Lemma \ref{L:compactsupponedim} this definition is correct. Moreover, $L_{F(f)}$ admits the integral representation
\[L_{F(f)}(H(f))=\int H\:d\gamma^f(F),\ \ H\in C_c(I_0^f)\]
with the finite nonnegative Radon measure $\gamma^f$, defined by
\[d\gamma^f(F)=F'^2d\sigma^f+\int(F(x)-F(y))^2J^f(dxdy)+F^2d\kappa^f.\]
In particular, we have $\gamma^f(F)(I_0^f)=\mathcal{E}(F(f))$ 
and 
\begin{equation}\label{E:coordidentsingle}
\int H\:d\gamma^f(F)\int_X H(f)\:d\Gamma(F(f)), \ \ H\in C_c(I_0^f),
\end{equation}
where $\Gamma(F(f))$ is the energy measure of $F(f)$ defined in (\ref{E:gammaast}).
By restriction and Lebesgue's theorem identity (\ref{E:coordidentsingle}) remains valid for any bounded Borel function $G$ on $\mathbb{R}$ that vanishes in a neighborhood of zero. If $F$ is constant on a Borel set $A\subset I_0^f$ then
\begin{align}\label{E:nonlocalest}
\int_A\int_{I_0^f} (F(x)-F(y))^2J^f(dxdy)=\int_A\int_{A^c}(F(x)-F(y))^2 J^f(dxdy)\notag\\
\leq \int_{A^c}\int_{I_0^f} (F(x)-F(y))^2 J^f(dxdy)
\end{align}
by Fubini and the symmetry of $J^f$.

Given $\alpha>0$, let $\varphi_\alpha\in C^1(\mathbb{R})$ be a function that satisfies  $\left\|\varphi_\alpha'\right\|_{\sup}\leq 1$ and 
\begin{equation}\label{E:phialpha}
\varphi_\alpha(x)=\begin{cases} 2\alpha \ \  & x\geq 2\alpha\\
x  & -\alpha \leq x\leq \alpha\\
-2\alpha & x\leq -2\alpha.\end{cases}
\end{equation}

\begin{lemma}
For any $f\in\mathcal{D}$ and any $\alpha>0$ we have $\mathcal{E}(f-\varphi_\alpha(f))\leq 4\:\gamma^f(id)((-\alpha,\alpha)^ c)$.
\end{lemma}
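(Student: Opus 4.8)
The plan is to set $F:=\mathrm{id}-\varphi_\alpha$, rewrite $\mathcal{E}(f-\varphi_\alpha(f))=E^f(F)$ via its Beurling--Deny decomposition, and then estimate the three resulting terms separately against the diffusion, jump and killing parts of $\gamma^f(\mathrm{id})$ restricted to $(-\alpha,\alpha)^c$. First I would verify that $F$ meets the hypotheses of Lemma \ref{L:imagemeas}: since $\varphi_\alpha\in C^1(\mathbb{R})$ with $\varphi_\alpha(0)=0$ and $\left\|\varphi_\alpha'\right\|_{\sup}\leq 1$, the function $F$ is $C^1$ with $F(0)=0$ and $F'=1-\varphi_\alpha'\in[0,2]$, so in particular $F'\geq 0$; moreover $\varphi_\alpha$ is a normal contraction, whence $\varphi_\alpha(f)\in\mathcal{D}$ and $f-\varphi_\alpha(f)\in\mathcal{D}$. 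Lemma \ref{L:imagemeas} then yields
\[
\mathcal{E}(f-\varphi_\alpha(f))=E^f(F)=\int F'^2\,d\sigma^f+\frac12\int\int(F(x)-F(y))^2J^f(dxdy)+\int F^2\,d\kappa^f.
\]
The structural observation driving the rest of the argument is that $\varphi_\alpha=\mathrm{id}$ on $[-\alpha,\alpha]$, so both $F$ and $F'$ vanish identically there; correspondingly, $\gamma^f(\mathrm{id})$ has densities $1$, $(x-y)^2$ and $x^2$ against $\sigma^f$, $J^f$ and $\kappa^f$.

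The diffusion and killing terms are then straightforward. Since $F'=0$ on $[-\alpha,\alpha]$ and $0\leq F'\leq 2$, the first term is at most $4\,\sigma^f((-\alpha,\alpha)^c)$, i.e. four times the diffusion part of $\gamma^f(\mathrm{id})((-\alpha,\alpha)^c)$. As $\varphi_\alpha$ is $1$-Lipschitz with $\varphi_\alpha(0)=0$ we have $|\varphi_\alpha(x)|\leq|x|$, hence $|F(x)|\leq 2|x|$, and since $F$ vanishes on $[-\alpha,\alpha]$ the killing term is at most $4\int_{(-\alpha,\alpha)^c}x^2\,d\kappa^f$, four times the killing part of $\gamma^f(\mathrm{id})((-\alpha,\alpha)^c)$.

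The main obstacle is the jump term, which is a double integral and must be localized to $\{|x|>\alpha\}$. Here I would use the non-local estimate (\ref{E:nonlocalest}) with $A=[-\alpha,\alpha]$, on which $F$ is constant: splitting the outer integration over $A$ and $A^c$ and applying (\ref{E:nonlocalest}) gives $\int\int(F(x)-F(y))^2J^f\leq 2\int_{A^c}\int_{I_0^f}(F(x)-F(y))^2J^f$. Because $F$ is $2$-Lipschitz, $(F(x)-F(y))^2\leq 4(x-y)^2$, so after absorbing the factor $\tfrac12$ from the decomposition the jump term is bounded by $4\int_{A^c}\int_{I_0^f}(x-y)^2J^f$, which is four times the jump part of $\gamma^f(\mathrm{id})((-\alpha,\alpha)^c)$ (using $A^c\subset(-\alpha,\alpha)^c$). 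Summing the three bounds and recalling the definition of $\gamma^f(\mathrm{id})$ gives the claim. The only delicate point is the bookkeeping of constants: the factor $2$ from (\ref{E:nonlocalest}), the Lipschitz constant $2$, and the factor $\tfrac12$ in the Beurling--Deny jump term combine to yield exactly $4$.
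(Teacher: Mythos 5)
Your proof is correct and follows essentially the same route as the paper's: both reduce to the Beurling--Deny decomposition of the coordinate form $E^f$ applied to $F=\mathrm{id}-\varphi_\alpha$ (justified via Lemma \ref{L:imagemeas}), exploit that $F$ and $F'$ vanish on $(-\alpha,\alpha)$, use (\ref{E:nonlocalest}) to localize the jump part, and bound the remaining contributions by the $2$-Lipschitz property of $F$, arriving at the same constant $4$. The only cosmetic difference is that you organize the estimate by the three terms of the decomposition while the paper splits the domain into $(-\alpha,\alpha)$ and its complement; these are the same computation.
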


\begin{proof}
We have $\mathcal{E}(f-\varphi_\alpha(f))=\gamma^f(id-\varphi_\alpha)(I_0^f)$. Since $id-\varphi_\alpha=0$ on $(-\alpha,\alpha)$, relation (\ref{E:nonlocalest}) implies
\begin{align}
\gamma^f(id-\varphi_\alpha)((-\alpha,\alpha))&\leq \frac12 \int_{(-\alpha,\alpha)^c}\int_{\mathbb{R}} (x-\varphi_\alpha(x)-y+\varphi_\alpha(y))^2J^f(dxdy)\notag\\
&\leq 2\int_{(-\alpha,\alpha)^c}\int_{\mathbb{R}} (x-y)^2 J^f(dxdy).\notag
\end{align}
For $(-\alpha,\alpha)^c$ we use the contractivity of $\varphi_\alpha$ to obtain
\[\gamma^f(id-\varphi_\alpha)((-\alpha,\alpha)^c)\leq 4\sigma^f((-\alpha,\alpha)^c)+2\int_{(-\alpha,\alpha)^c}\int_\mathbb{R} (x-y)^2 J^f(dxdy)+4\int_{(-\alpha,\alpha)^c}x^2\kappa^f(dx).\]
\end{proof}

Together with (\ref{E:coordidentsingle}) the lemma yields the following.
\begin{corollary}\label{C:dominate}
For any $f\in\mathcal{D}$ and any $\alpha>0$ we have $\mathcal{E}(f-\varphi_\alpha(f))\leq 4\:\Gamma(f)(\left\lbrace |f|\geq \alpha\right\rbrace)$.
\end{corollary}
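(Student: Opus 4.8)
The plan is to read this off from the preceding lemma together with the transfer identity (\ref{E:coordidentsingle}). The lemma already reduces the energy $\mathcal{E}(f-\varphi_\alpha(f))$ to the mass that the coordinate measure $\gamma^f(id)$ assigns to the set $(-\alpha,\alpha)^c\subset I_0^f$, so the only remaining task is to re-express this one-dimensional coordinate mass as a mass of the energy measure $\Gamma(f)$ on $X$.

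First I would specialize (\ref{E:coordidentsingle}) to $F=id$, so that $F(f)=f$ and the right-hand side features $\Gamma(f)$ itself; the resulting identity
\[
\int H\,d\gamma^f(id)=\int_X H(f)\,d\Gamma(f)
\]
holds for $H\in C_c(I_0^f)$ and, as noted in the text immediately after (\ref{E:coordidentsingle}), extends by restriction and Lebesgue's theorem to any bounded Borel function on $\mathbb{R}$ vanishing in a neighborhood of zero. Next I would apply this extended identity to $H=\mathbf{1}_{(-\alpha,\alpha)^c}$. Since $\alpha>0$, this indicator vanishes on the neighborhood $(-\alpha,\alpha)$ of the origin, hence is admissible, and testing against it gives
\[
\gamma^f(id)\big((-\alpha,\alpha)^c\big)=\int_X \mathbf{1}_{(-\alpha,\alpha)^c}(f)\,d\Gamma(f).
\]
Because $\mathbf{1}_{(-\alpha,\alpha)^c}(f(x))=\mathbf{1}_{\left\lbrace |f|\geq\alpha\right\rbrace}(x)$ pointwise on $X$, the right-hand side equals $\Gamma(f)(\left\lbrace |f|\geq\alpha\right\rbrace)$. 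Substituting this into the bound $\mathcal{E}(f-\varphi_\alpha(f))\leq 4\,\gamma^f(id)((-\alpha,\alpha)^c)$ from the preceding lemma yields the claim.

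The only point needing care — and the closest thing to an obstacle — is the use of the discontinuous test function $\mathbf{1}_{(-\alpha,\alpha)^c}$, since the raw identity (\ref{E:coordidentsingle}) is stated only for $H\in C_c(I_0^f)$. I would justify this by invoking the extension to bounded Borel functions vanishing near zero recorded after (\ref{E:coordidentsingle}), observing that $(-\alpha,\alpha)^c$ is bounded away from $0$ and, by Lemma \ref{L:compactsupponedim}, meets the representing measures only inside their compact support $K$, so no boundary behaviour or integrability issue arises.
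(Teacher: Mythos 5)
Your argument is correct and is exactly the paper's intended proof: the paper dispatches this corollary with the single line ``Together with (\ref{E:coordidentsingle}) the lemma yields the following,'' and you have filled in precisely those details --- specializing (\ref{E:coordidentsingle}) to $F=id$, testing against $H=\mathbf{1}_{(-\alpha,\alpha)^c}$ via the stated extension to bounded Borel functions vanishing near zero, and identifying $\mathbf{1}_{(-\alpha,\alpha)^c}(f)=\mathbf{1}_{\{|f|\geq\alpha\}}$. Your remark that $(-\alpha,\alpha)^c$ is bounded away from the origin and meets the representing measures only inside the compact set $K$ is the right justification for admissibility of this discontinuous test function.
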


Recall Corollary \ref{C:dominant} and in particular that for any $f\in\mathcal{D}$ we write $\Gamma(f)$ to denote the Radon-Nikodym density $\frac{d\Gamma(f)}{dm} \in L_1(X,m)$. The next lemma shows the uniform $m$-integrability of the densities of $\mathcal{E}$-Cauchy sequences. In a similar form this argument appeared already in \cite[Lemma 2.1]{Schmu92}, we sketch it for completeness.

\begin{lemma}\label{L:UI}
Let $(f_n)_n\subset \mathcal{D}$ be an $\mathcal{E}$-Cauchy sequence. Then the sequence $(\Gamma(f_n))_n$ is convergent in $L_1(X,m)$ and in particular, uniformly $m$-integrable (uniformly absolutely continuous with respect to $m$), i.e. for any $\varepsilon>0$ we can find some $\delta>0$ such that for any set $A\in \sigma(\mathcal{D})$ the relation $m(A)<\delta$ implies
\[\sup_n \Gamma(f_n)(A)<\varepsilon.\]
\end{lemma}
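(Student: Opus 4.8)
The plan is to show that the Radon--Nikodym densities $\Gamma(f_n)=\frac{d\Gamma(f_n)}{dm}$ form a Cauchy sequence in $L_1(X,m)$; the asserted uniform $m$-integrability is then a routine consequence of $L_1$-convergence. The engine of the estimate is the set-function inequality (\ref{E:contmeasure}), which I first upgrade to a pointwise inequality between densities.

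First I would record this pointwise version. By Corollary \ref{C:dominant} every $\Gamma(h)$, $h\in\mathcal{D}$, extends to a finite measure on $\sigma(\mathcal{D})$ absolutely continuous with respect to $m$, and by polarization the same holds for the signed measures $\Gamma(f,g)$; write $\Gamma(f,g)$ also for the corresponding densities. Since $af+bg\in\mathcal{D}$, Theorem \ref{T:positive} makes $L_{af+bg}$ positive, so $\Gamma(af+bg)=a^2\Gamma(f)+2ab\,\Gamma(f,g)+b^2\Gamma(g)$ is a nonnegative measure, whence its density satisfies $a^2\Gamma(f)+2ab\,\Gamma(f,g)+b^2\Gamma(g)\geq0$ $m$-a.e. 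Intersecting the countably many null sets over all $(a,b)\in\mathbb{Q}^2$ and passing to the closure in $(a,b)$ yields $\Gamma(f,g)^2\leq\Gamma(f)\Gamma(g)$ $m$-a.e., and therefore
\[
\Gamma(f-g)=\Gamma(f)-2\Gamma(f,g)+\Gamma(g)\geq\big(\Gamma(f)^{1/2}-\Gamma(g)^{1/2}\big)^2 \quad m\text{-a.e.}
\]

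The core step is then the identity $|a-b|=|a^{1/2}-b^{1/2}|\,(a^{1/2}+b^{1/2})$ for $a,b\geq0$ followed by Cauchy--Schwarz in $L_2(X,m)$:
\[
\int_X|\Gamma(f_n)-\Gamma(f_m)|\,dm\leq\Big(\int_X\big(\Gamma(f_n)^{1/2}-\Gamma(f_m)^{1/2}\big)^2dm\Big)^{1/2}\Big(\int_X\big(\Gamma(f_n)^{1/2}+\Gamma(f_m)^{1/2}\big)^2dm\Big)^{1/2}.
\]
The pointwise inequality bounds the first factor by $\int_X\Gamma(f_n-f_m)\,dm=\Gamma(f_n-f_m)(X)=2\mathcal{E}_L(f_n-f_m)$, using $\Gamma(h)(X)=\left\|L_h\right\|_{\mathcal{D}'}=2\mathcal{E}_L(h)$ from Theorem \ref{T:intrepadd} and (\ref{E:EL}); the inequality $(a^{1/2}+b^{1/2})^2\leq2(a+b)$ bounds the second factor by $4(\mathcal{E}_L(f_n)+\mathcal{E}_L(f_m))$. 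Since (\ref{E:Lagrangesupbound}) gives $\mathcal{E}_L\leq\mathcal{E}$, while the $\mathcal{E}$-Cauchy property yields $C:=\sup_n\mathcal{E}(f_n)<\infty$ and $\mathcal{E}(f_n-f_m)\to0$, I obtain
\[
\int_X|\Gamma(f_n)-\Gamma(f_m)|\,dm\leq\big(2\mathcal{E}(f_n-f_m)\big)^{1/2}(8C)^{1/2}\xrightarrow[n,m\to\infty]{}0,
\]
so $(\Gamma(f_n))_n$ converges in $L_1(X,m)$ to some limit $\Gamma_\infty$.

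Finally, uniform $m$-integrability follows in the textbook way: given $\varepsilon>0$ choose $N$ with $\|\Gamma(f_n)-\Gamma_\infty\|_{L_1}<\varepsilon/2$ for $n\geq N$, and use the absolute continuity of the integrals of the finitely many functions $\Gamma_\infty,\Gamma(f_1),\dots,\Gamma(f_{N-1})$ to pick $\delta>0$ so that $m(A)<\delta$ forces each of $\int_A\Gamma_\infty\,dm$ and $\int_A\Gamma(f_i)\,dm$ ($i<N$) below $\varepsilon$; for $n\geq N$ then $\Gamma(f_n)(A)=\int_A\Gamma(f_n)\,dm\leq\int_A\Gamma_\infty\,dm+\|\Gamma(f_n)-\Gamma_\infty\|_{L_1}<\varepsilon$. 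I expect the only genuinely delicate point to be the passage from the set-function estimate (\ref{E:contmeasure}) to its pointwise density form, which is exactly where the absolute continuity of all $\Gamma(f,g)$ with respect to $m$ is used; once that is in hand, the Cauchy--Schwarz estimate and the uniform-integrability deduction are straightforward.
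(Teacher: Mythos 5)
Your proposal is correct and follows essentially the same route as the paper: the pointwise $m$-a.e.\ bound $|\Gamma(f)^{1/2}-\Gamma(g)^{1/2}|\leq\Gamma(f-g)^{1/2}$ (which the paper attributes to an $m$-a.e.\ Cauchy--Schwarz inequality, exactly the rational-coefficients argument you spell out), then the factorization $|a-b|=|a^{1/2}-b^{1/2}|(a^{1/2}+b^{1/2})$ with Cauchy--Schwarz in $L_2(X,m)$ and the bound $\Gamma(h)(X)\leq 2\mathcal{E}(h)$ to get the $L_1$-Cauchy property, and finally the standard deduction of uniform integrability from $L_1$-convergence. Your version merely fills in details the paper delegates to citations of Hino and Schmuland.
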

\begin{proof}
Similarly as in \cite[Lemma 2.5(i)]{Hino10} we have
\begin{equation}\label{E:bilinearpointwise}
|\Gamma(f)^{1/2}(x)-\Gamma(g)^{1/2}(x)|\leq \Gamma(f-g)^{1/2}(x)\ \ \text{for $m$-a.a. $x\in X$}
\end{equation}
and for any $f,g\in\mathcal{F}$. This follows easily from an $m$-a.e. valid Cauchy-Schwarz inequality. Reasoning as in \cite[Lemma 2.1]{Schmu92} and \cite[Lemma 2.5(ii)]{Hino10},
\begin{align}
\int_X |\Gamma(f_m)-\Gamma(f_n)|dm & =\int_X |\Gamma(f_m)^{1/2}-\Gamma(f_n)^{1/2}|\left(\Gamma(f_m)^{1/2}+\Gamma(f_n)^{1/2}\right)dm \notag\\
&\leq 2\left(\int_X |\Gamma(f_m)^{1/2}-\Gamma(f_n)^{1/2}|^2dm\right)^{1/2}\:\sup_n\mathcal{E}(f_n)^{1/2},\notag
\end{align}
and integrating (\ref{E:bilinearpointwise}) we see that $\left(\Gamma(f_n)\right)_n\subset L_1(X,m)$ is Cauchy in $L_1(X,m)$, hence convergent, what implies uniform integrability.
\end{proof}

\begin{remark}
Alternatively we could use (\ref{E:contmeasure}) to conclude the uniform integrability from the Vitali-Hahn-Saks Theorem, cf. \cite[Theorem III.7.2]{DS}.
\end{remark}

Thanks to Lemma \ref{L:UI} we can establish a key Proposition which allows to switch from a given $\mathcal{E}$-Cauchy sequence to a sequence that decreases to zero in sup-norm. Recall (\ref{E:phialpha}).

\begin{proposition}\label{P:subseq}
Let $(f_n)_n\subset \mathcal{D}$ be a sequence that is $\mathcal{E}$-Cauchy and converges to zero in $L_2(X,m)$. Then there are a sequence $(k_j)_j\subset \mathbb{N}$ with $\lim_j k_j=\infty$ and a subsequence $(g_j)_j$ of $(f_n)_n$ such that
\begin{equation}\label{E:subCauchy}
\lim_j \mathcal{E}\left(g_j-\varphi_{\frac{1}{k_j}}(g_j)\right)=0.
\end{equation}
\end{proposition}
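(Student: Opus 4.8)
The plan is to control the energy $\mathcal{E}\big(g_j-\varphi_{\frac{1}{k_j}}(g_j)\big)$ by the energy measure of the exceptional ``large values'' set, using Corollary \ref{C:dominate}, and then to arrange that this set is so small in $m$-measure that the uniform integrability of the energy densities from Lemma \ref{L:UI} forces the energy measure to zero. Concretely, Corollary \ref{C:dominate} gives $\mathcal{E}\big(g_j-\varphi_{\frac{1}{k_j}}(g_j)\big)\le 4\,\Gamma(g_j)\big(\{|g_j|\ge \tfrac{1}{k_j}\}\big)$, so it suffices to choose the subsequence $(g_j)_j$ and the integers $(k_j)_j$ so that $\Gamma(g_j)\big(\{|g_j|\ge \tfrac{1}{k_j}\}\big)\to 0$ while $k_j\to\infty$.

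First I would extract a gauge from Lemma \ref{L:UI}: applying uniform integrability with $\varepsilon=1/j$ yields, for each $j\ge 1$, some $\delta_j>0$ such that $m(A)<\delta_j$ implies $\sup_n\Gamma(f_n)(A)<1/j$ for all $A\in\sigma(\mathcal{D})$. Next I would invoke the hypothesis $f_n\to 0$ in $L_2(X,m)$ together with Chebyshev's inequality, $m\big(\{|f_n|\ge \tfrac{1}{j}\}\big)\le j^2\,\|f_n\|_{L_2(X,m)}^2$. For each \emph{fixed} $j$ the right-hand side tends to zero as $n\to\infty$, so I can select indices $n_1<n_2<\cdots$ with $m\big(\{|f_{n_j}|\ge \tfrac{1}{j}\}\big)<\delta_j$ for all $j$. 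Setting $g_j:=f_{n_j}$ and $k_j:=j$, the set $\{|g_j|\ge \tfrac{1}{k_j}\}=\{|f_{n_j}|\ge \tfrac{1}{j}\}$ then has $m$-measure below $\delta_j$, and the gauge gives $\Gamma(g_j)\big(\{|g_j|\ge \tfrac{1}{k_j}\}\big)\le \sup_n\Gamma(f_n)\big(\{|g_j|\ge \tfrac{1}{k_j}\}\big)<1/j$. Combining this with Corollary \ref{C:dominate} yields $\mathcal{E}\big(g_j-\varphi_{\frac{1}{k_j}}(g_j)\big)<4/j\to 0$, which is exactly (\ref{E:subCauchy}), and $k_j=j\to\infty$ as required.

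The step that requires genuine care is the coupling of two competing limits: driving the cut-off level $\tfrac{1}{k_j}$ to zero \emph{enlarges} the exceptional set $\{|g_j|\ge \tfrac{1}{k_j}\}$, while the $L_2$-smallness of $f_{n_j}$ \emph{shrinks} it. A single Chebyshev estimate at a fixed threshold is therefore not enough; the resolution is the \emph{diagonal} selection above, in which $n_j$ is chosen large enough (depending on $j$ through both $\delta_j$ and the decay of $\|f_n\|_{L_2}$) that the $L_2$-decay outpaces the factor $j^2$ produced by the shrinking threshold. It is worth emphasizing that the $\mathcal{E}$-Cauchy hypothesis enters only through Lemma \ref{L:UI}: it is precisely the \emph{uniform} integrability $\sup_n\Gamma(f_n)(A)<\varepsilon$ (rather than pointwise smallness of a single $\Gamma(f_n)$) that makes the argument work once the exceptional sets are made $m$-small at a rate matched to the gauge $\delta_j$.
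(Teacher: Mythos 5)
Your proof is correct and follows essentially the same route as the paper: a diagonal selection combining the uniform integrability gauge from Lemma \ref{L:UI} with the Chebyshev consequence of $L_2$-convergence, concluded via Corollary \ref{C:dominate}. The only cosmetic difference is that you take $k_j=j$ and carry a separate gauge $\delta_j$, whereas the paper encodes the gauge directly as the threshold $1/k_j$; both choices yield \eqref{E:subCauchy} with $k_j\to\infty$.
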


\begin{proof}
According to Lemma \ref{L:UI} for any $j\in\mathbb{N}\setminus\left\lbrace 0\right\rbrace$ there exists some other $k_j$ such that for any $k\geq k_j$ 
\begin{equation}\label{E:UI}
m(A)<\frac{1}{k}\ \ \text{ implies } \ \ \sup_l\int_A\Gamma(f_{n_l})dm<\frac{1}{j}
\end{equation}
for any $A\in\sigma(\mathcal{D})$.
As $(f_n)_n$ converges to zero in $L_2(X,m)$ we further observe that for any $k$ there is some $n_k$ such that for and $n\geq n_k$ we have
\begin{equation}\label{E:weak}
m\left(|f_n|\geq\frac{1}{k}\right)<\frac{1}{k}.
\end{equation}
Combining (\ref{E:UI}) and (\ref{E:weak}) shows that for any $n\geq n_{k_j}$ we have 
\[\sup_l\int_{\left\lbrace |f_n|\geq\frac{1}{k_j}\right\rbrace}\Gamma(f_{n_l})dm<\frac{1}{j}.\]
Writing $g_j:=f_{n_{k_j}}$ and using Corollary \ref{C:dominate} we see that in particular
\[\mathcal{E}\left(g_j-\varphi_{\frac{1}{k_j}}(g_j)\right)\leq 8\int_{\left\lbrace |g_j|\geq\frac{1}{k_j}\right\rbrace}\Gamma(g_j)dm<\frac{1}{j}.\]
\end{proof}

\begin{remark}
Note that it would be sufficient to require that $(f_n)_n$ converges to zero in $m$-measure. 
\end{remark}

We finally prove Theorem \ref{T:clos}.

\begin{proof}
Let $(f_n)_n\subset\mathcal{D}$ be a sequence that is $\mathcal{E}$-Cauchy and converges to zero in $L_2(X,m)$. Let $(g_j)_j$ be the subsequence of $(f_n)_n$ and $(k_j)_j$ the corresponding sequence of indices with (\ref{E:subCauchy}), shown to exist in Proposition \ref{P:subseq}. Clearly $(g_j)_j$ is $\mathcal{E}$-Cauchy, too. We have
\[\mathcal{E}\left(\varphi_{\frac{1}{k_j}}(g_j)-\varphi_{\frac{1}{k_l}}(g_l)\right)^{1/2}\leq \mathcal{E}\left(\varphi_{\frac{1}{k_j}}(g_j)-g_j\right)^{1/2}+\mathcal{E}\left(\varphi_{\frac{1}{k_l}}(g_l)-g_l\right)^{1/2}+\mathcal{E}(g_j-g_l)^{1/2},\]
which by (\ref{E:subCauchy}) is arbitrarily small, provided $j$ and $l$ are large enough. Consequently 
\[u_j:=\varphi_{\frac{1}{k_j}}(g_j)\]
defines an $\mathcal{E}$-Cauchy sequence $(u_j)_j$. By construction
\[\sup_{x\in X}|u_j(x)|\leq \frac{2}{k_j}\]
for all $j$, and sup-closability implies $\lim_j \mathcal{E}(u_j)=0$. Another application of (\ref{E:subCauchy}) yields $\lim_j\mathcal{E}(g_j)=0$, and since $(g_j)_j$ is an $\mathcal{E}$-convergent subsequence of the $\mathcal{E}$-Cauchy sequence $(f_n)_n$, 
\[\lim_n \mathcal{E}(f_n)=0.\]
The bilinear form $(\mathcal{E},\mathcal{D})$ is a closable, densely defined and positive definite symmetric bilinear form on $L_2(X,m)$ on which normal contractions operate. Hence its closure is a Dirichlet form. 
\end{proof}

Corollary \ref{C:domandclos} now is an immediate consequence of Theorem \ref{T:clos} together with Remark \ref{R:lcccase} (i). Another modification of Theorem \ref{T:clos} for Lagrangians reads as follows. 

\begin{theorem}\label{T:clos2}
Let $\mathcal{D}$ be a $C^1_c(\mathbb{R}^2)$-stable algebra containing a strictly positive function $\chi>0$ and let $(L,\mathcal{D})$ be a sup-closable Lagrangian on which normal contractions operate. If $m$ is an energy dominant measure for $(L,\mathcal{D})$, then its energy $(\mathcal{E}_L,\mathcal{D})$ is closable on $L_2(X,m)$, and its closure is a Dirichlet form that admits a carr\'e du champ.
\end{theorem}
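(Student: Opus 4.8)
The plan is to deduce this from Theorem~\ref{T:clos} applied to the energy form $(\mathcal{E}_L,\mathcal{D})$. Three of the four hypotheses of that theorem transfer at once: the algebra $\mathcal{D}$ is $C^1_c(\mathbb{R}^2)$-stable and contains the strictly positive $\chi$ by assumption; by Proposition~\ref{P:LtoEL} the form $(\mathcal{E}_L,\mathcal{D})$ is sup-norm-closable; and since normal contractions operate on $(L,\mathcal{D})$, they operate on $(\mathcal{E}_L,\mathcal{D})$ as well, by the lemma in Section~\ref{S:Lagrange} which records that $\mathcal{E}_L$ inherits the contraction properties of $L$. What remains is the fourth hypothesis, namely that $m$ is energy dominant for $(\mathcal{E}_L,\mathcal{D})$.

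Here lies the crux. By definition, energy dominance of $m$ for $(\mathcal{E}_L,\mathcal{D})$ refers to the energy measures of the Lagrangian that $(\mathcal{E}_L,\mathcal{D})$ generates through \eqref{E:Lagrange1}, whereas the hypothesis supplies absolute continuity only for the measures $\Gamma(f)$ of the given Lagrangian $(L,\mathcal{D})$. The decisive step is therefore to \emph{identify} these two Lagrangians, that is, to show that for every $f\in\mathcal{D}$ the functional $h\mapsto 2\mathcal{E}_L(fh,f)-\mathcal{E}_L(f^2,h)$ coincides with $L_f$. Granting this, the two families of energy measures agree, so $m$ is energy dominant for $(\mathcal{E}_L,\mathcal{D})$; the same identification guarantees that the carr\'e du champ produced by Theorem~\ref{T:clos}, which is intrinsically that of the closure of $\mathcal{E}_L$, is precisely the family $\Gamma(f)$ from the hypothesis.

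To carry out this identification I would invoke the global Leibniz property $2\mathcal{E}_L(fg,h)=L_{f,h}(g)+L_{g,h}(f)$: specializing the indices gives $2\mathcal{E}_L(fh,f)=L_f(h)+L_{h,f}(f)$ and $\mathcal{E}_L(f^2,h)=L_{f,h}(f)$, whence the generated functional equals $L_f(h)+L_{h,f}(f)-L_{f,h}(f)=L_f(h)$ by the symmetry of $L$. On $\mathcal{D}_0$ this is legitimate once one knows that $(L,\mathcal{D}_0)$ enjoys the global Leibniz property, and the uniform density of $\mathcal{D}_0$ in $\mathcal{D}$ (Lemma~\ref{L:D0dense}) together with the bounds \eqref{E:Lagrangesupbound} and \eqref{E:bilinest} then propagates the identity, and with it the energy dominance, to all of $\mathcal{D}$. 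With the fourth hypothesis in place, Theorem~\ref{T:clos} delivers closability of $(\mathcal{E}_L,\mathcal{D})$ in $L_2(X,m)$, the Dirichlet property of the closure, and $m$-integrable densities for the energy measures, which is the assertion.

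I expect the main obstacle to be exactly this identification, equivalently the verification that a sup-norm-closable Lagrangian on which normal contractions operate satisfies the global Leibniz property with respect to its own associated energy $\mathcal{E}_L$. For Lagrangians known a priori to be generated by a bilinear form this is the content of the corollary following the definition of the global Leibniz property; for an abstract $(L,\mathcal{D})$ it must instead be extracted from sup-norm-closability and the contraction hypothesis, presumably again through the coordinate forms $F\mapsto L_{F(f),\cdot}(h)$ on the line and their Beurling--Deny decompositions as in Section~\ref{S:PLF}. Should the Leibniz identity resist a proof in full generality, the fallback is to establish the absolute continuity of the generated energy measures directly, by expanding $2\mathcal{E}_L(fh,f)-\mathcal{E}_L(f^2,h)$ into energy measures of $L$ attached to finitely many auxiliary functions, each of which is dominated by $m$ by hypothesis.
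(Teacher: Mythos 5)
The paper offers no written proof of Theorem~\ref{T:clos2}; it is announced only as ``another modification of Theorem~\ref{T:clos}'', i.e.\ the intended argument is to rerun the coordinate-form and uniform-integrability machinery of Section~\ref{S:clos} with the measures $\Gamma(f)$ of the given Lagrangian. Your route is instead a reduction to Theorem~\ref{T:clos}, resting on the identification of $L$ with the Lagrangian that $(\mathcal{E}_L,\mathcal{D})$ generates through \eqref{E:Lagrange1}. You have located the crux correctly, but the step you flag as ``the main obstacle'' is a genuine gap, and it cannot be closed from the stated hypotheses: the global Leibniz property is proved in the paper only for Lagrangians that are \emph{a priori} generated by a bilinear form, and it does not follow from sup-norm-closability together with the operation of normal contractions. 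Concretely, take $X=\{1,2\}$, $\mathcal{D}=\mathbb{R}^X$ and $L_{f,g}(h):=(f(1)-f(2))(g(1)-g(2))\,h(1)$. This is a sup-norm-closable Lagrangian on which normal contractions operate, with energy measures $\Gamma(f)=(f(1)-f(2))^2\,\delta_1$, so $m=\delta_1$ is energy dominant for $(L,\mathcal{D})$. Its associated energy is $\mathcal{E}_L(f)=\tfrac12(f(1)-f(2))^2$, and the Lagrangian generated by $\mathcal{E}_L$ via \eqref{E:Lagrange1} is $\widetilde{L}_f(h)=\tfrac12(f(1)-f(2))^2\bigl(h(1)+h(2)\bigr)$, which differs from $L_f$ and whose energy measures charge the point $2$; they are therefore not absolutely continuous with respect to $\delta_1$. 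So the identification fails, $m$ is not energy dominant for $(\mathcal{E}_L,\mathcal{D})$ in the sense required by Theorem~\ref{T:clos}, and that theorem cannot be invoked.

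The same example shows that the difficulty is not an artifact of your particular reduction: the constant sequence $f_n\equiv(0,1)$ is $\mathcal{E}_L$-Cauchy and vanishes in $L_2(X,\delta_1)$ while $\mathcal{E}_L(f_n)=\tfrac12$, so $(\mathcal{E}_L,\mathcal{D})$ is not even well defined, let alone closable, on $L_2(X,m)$, and the analogue of Corollary~\ref{C:dominate} with the energy measures of $L$ on the right-hand side fails as well. Hence your fallback (expanding $2\mathcal{E}_L(fh,f)-\mathcal{E}_L(f^2,h)$ into energy measures of $L$) meets the same obstruction, as would the paper's intended modification of the proof of Theorem~\ref{T:clos}. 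What is missing is an additional hypothesis tying $L$ to its own energy $\mathcal{E}_L$ --- the global Leibniz property, a locality or chain-rule assumption on $L$, or simply the assumption that $L$ is generated by a bilinear form --- under which your algebraic identification $2\mathcal{E}_L(fh,f)-\mathcal{E}_L(f^2,h)=L_f(h)$ is valid and the rest of your argument (transfer of $C^1_c(\mathbb{R}^2)$-stability, of the strictly positive $\chi$, of sup-norm-closability via Proposition~\ref{P:LtoEL}, and of the contraction property) goes through.
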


As yet another consequence of Theorem \ref{T:clos} together with \cite[Corollary I.3.3.2]{BH91} we obtain a Leibniz type bound.

\begin{corollary} Let $\mathcal{D}$ be a $C^1_c(\mathbb{R}^2)$-stable algebra containing a strictly positive function $\chi>0$, let $(\mathcal{E},\mathcal{D})$ be a sup-norm-closable bilinear form on which normal contractions operate and let $m$ be an energy dominant measure. Then we have
\begin{equation}\label{E:Leibnizest}
\mathcal{E}(fg)^{1/2}\leq \mathcal{E}(f)^{1/2}\left\|g\right\|_{\sup}+\mathcal{E}(g)^{1/2}\left\|f\right\|_{\sup},\ \ f,g\in\mathcal{D}.
\end{equation}
\end{corollary}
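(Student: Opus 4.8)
The plan is to reduce everything to Theorem~\ref{T:clos} and the functional calculus for carré du champ operators developed in \cite{BH91}. First I would check that the hypotheses of Theorem~\ref{T:clos} are exactly those assumed here: $\mathcal{D}$ is a $C^1_c(\mathbb{R}^2)$-stable algebra containing a strictly positive $\chi$, the form $(\mathcal{E},\mathcal{D})$ is sup-norm-closable with normal contractions operating, and $m$ is energy dominant. Hence $(\mathcal{E},\mathcal{D})$ is closable in $L_2(X,m)$ and its closure $(\mathcal{E},\mathcal{F})$ is a Dirichlet form admitting a carré du champ; by Corollary~\ref{C:dominant} every energy measure has an $m$-integrable density, which I again denote $\Gamma(f)$. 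Since $\mathcal{D}$ is an algebra of bounded functions, for any $f,g\in\mathcal{D}$ the product $fg$ lies in $\mathcal{D}$, so its $m$-class lies in $\mathcal{F}$, $\mathcal{E}(fg)$ is meaningful, and the functional calculus of \cite[Chapter~I]{BH91} is available.

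The heart of the matter is the pointwise Leibniz rule for the carré du champ combined with an $m$-a.e. Cauchy-Schwarz inequality. For $f,g\in\mathcal{D}$ the density of the product obeys $\Gamma(fg)=f^2\,\Gamma(g)+2fg\,\Gamma(f,g)+g^2\,\Gamma(f)$ $m$-a.e., and the pointwise inequality $|\Gamma(f,g)|\le\Gamma(f)^{1/2}\Gamma(g)^{1/2}$ (the analogue of (\ref{E:bilinearpointwise})) gives
\[
\Gamma(fg)^{1/2}\le|f|\,\Gamma(g)^{1/2}+|g|\,\Gamma(f)^{1/2}\qquad m\text{-a.e.},
\]
which is precisely \cite[Corollary~I.3.3.2]{BH91}. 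Replacing $|f|$ and $|g|$ by $\|f\|_{\sup}$ and $\|g\|_{\sup}$, integrating against $m$ and applying Minkowski's inequality in $L_2(X,m)$ then yields
\[
\Big(\int_X\Gamma(fg)\,dm\Big)^{1/2}\le\|f\|_{\sup}\Big(\int_X\Gamma(g)\,dm\Big)^{1/2}+\|g\|_{\sup}\Big(\int_X\Gamma(f)\,dm\Big)^{1/2}.
\]
Recalling $\int_X\Gamma(h)\,dm=2\mathcal{E}(h)$ and cancelling the common factor $\sqrt 2$ produces exactly (\ref{E:Leibnizest}).

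All of these steps are routine once the carré du champ is in hand, so the only substantial ingredient is Theorem~\ref{T:clos}: the statement is genuinely a corollary. The one point I would be careful about is the identification $2\mathcal{E}(h)=\int_X\Gamma(h)\,dm$, i.e. that no killing contribution is left unaccounted for. On $\mathcal{D}_0$ this follows from $\mathcal{E}_L=\mathcal{E}$ (Corollary~\ref{C:consistent}) together with $\Gamma(h)(X)=2\mathcal{E}_L(h)$; since $\mathcal{D}_0$ is uniformly dense in $\mathcal{D}$ (Lemma~\ref{L:D0dense}) and $\mathbf 1\in\mathcal{D}$ forces $\mathcal{D}_0=\mathcal{D}$, the passage to general $f,g\in\mathcal{D}$ is harmless, and in all cases the pointwise estimate above already encodes the full Leibniz bound. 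Thus the decisive work has been done in Theorem~\ref{T:clos}, and the present corollary only assembles it with the carré du champ inequality of \cite{BH91}.
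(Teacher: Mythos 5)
Your overall strategy --- verify the hypotheses of Theorem \ref{T:clos}, obtain a Dirichlet form on $L_2(X,m)$, and then invoke \cite[Corollary I.3.3.2]{BH91} --- is exactly the paper's intended argument; the paper gives no separate proof beyond the remark that the corollary follows from Theorem \ref{T:clos} together with that citation. However, the way you deploy the citation introduces genuine gaps. \cite[Corollary I.3.3.2]{BH91} is the \emph{integrated} Leibniz bound $\mathcal{E}(fg)^{1/2}\leq\|f\|_\infty\mathcal{E}(g)^{1/2}+\|g\|_\infty\mathcal{E}(f)^{1/2}$, valid for \emph{any} Dirichlet form and any bounded elements of its domain, with no carr\'e du champ required. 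It is not the pointwise identity $\Gamma(fg)=f^2\Gamma(g)+2fg\,\Gamma(f,g)+g^2\Gamma(f)$ that you take as your starting point. That exact product rule, and the pointwise bound $\Gamma(fg)^{1/2}\leq|f|\Gamma(g)^{1/2}+|g|\Gamma(f)^{1/2}$ with the pointwise moduli $|f|,|g|$, hold only for \emph{strongly local} forms. The Dirichlet form produced by Theorem \ref{T:clos} need not be local: the coordinate representations (\ref{E:BD2}), (\ref{E:BD}) explicitly carry jump and killing parts, and for the jump part one only gets $f(x)g(x)-f(y)g(y)=f(x)(g(x)-g(y))+g(y)(f(x)-f(y))$, which yields the weaker pointwise inequality with $\|f\|_{\sup},\|g\|_{\sup}$ in place of $|f|,|g|$. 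That weaker inequality does suffice for your subsequent Minkowski step, so the argument is repairable, but as written the key pointwise identity would fail.

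A second, related issue is the normalization $\int_X\Gamma(h)\,dm=2\mathcal{E}(h)$. In this paper $\Gamma(h)(X)=\|L_h\|_{\mathcal{D}'}=2\mathcal{E}_L(h)$, and $\mathcal{E}_L=\mathcal{E}$ only on $\mathcal{D}_0$ (Corollary \ref{C:consistent}); moreover the paper's $\Gamma(h)$ then necessarily contains the killing contribution $F^2\,d\kappa$, which is incompatible with the exact product rule you wrote for the densities (the killing part of $\Gamma(fg)$ is $f^2g^2\,d\kappa$, not $4f^2g^2\,d\kappa$). Your attempted fix, ``$\mathbf{1}\in\mathcal{D}$ forces $\mathcal{D}_0=\mathcal{D}$'', does not apply: the hypotheses only provide a strictly positive $\chi\in\mathcal{D}$, not $\mathbf{1}\in\mathcal{D}$, and strict positivity does not give $\mathcal{D}_0=\mathcal{D}$. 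All of these complications evaporate if you use \cite[Corollary I.3.3.2]{BH91} in its actual form: once Theorem \ref{T:clos} delivers a Dirichlet form whose domain contains the $m$-classes of $\mathcal{D}$, the estimate (\ref{E:Leibnizest}) is immediate for all $f,g\in\mathcal{D}$, killing and non-locality included.
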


For Dirichlet forms Theorems \ref{T:Dirichletform} and \ref{T:clos} yield a change of measure result. 

\begin{corollary}\label{C:changeofmeasure}
Let $(X,\mathcal{X},\mu)$ be a $\sigma$-finite measure space, let $(\mathcal{E},\mathcal{F})$ be a Dirichlet form on $L_2(X,\mathcal{X},\mu)$ and let $\mathcal{B}$ be defined as in (\ref{E:B}). Assume there exists a bounded and strictly positive $\mathcal{X}$-measurable function $\chi>0$ with $\mu$-class in $\mathcal{F}$. Then for any energy dominant measure $m$ the form $(\mathcal{E},\mathcal{B})$ is closable in $L_2(X,\sigma(\mathcal{B}), m)$ and its closure is a Dirichlet form that admits a carr\'e du champ. If in addition the measure space $(X,\sigma(\mathcal{B}),\mu)$ is complete, then we have $\sigma(\mathcal{B})=\mathcal{X}$.
\end{corollary}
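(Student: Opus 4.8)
The plan is to obtain the closability and carr\'e-du-champ assertion as a direct application of Theorems \ref{T:Dirichletform} and \ref{T:clos}, and then to treat the coincidence $\sigma(\mathcal{B})=\mathcal{X}$ as a separate, purely measure-theoretic point, which I expect to be the only delicate step.

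For the analytic part I would first record that Theorem \ref{T:Dirichletform} already supplies every hypothesis needed to enter Theorem \ref{T:clos}. Indeed, $\mathcal{B}$ is an algebra stable under normal contractions (so in particular it has the Stone property and is a vector lattice), the form $(\mathcal{E},\mathcal{B})$ is sup-norm-closable with normal contractions operating, and by \cite[Proposition I.4.1.1]{BH91} each functional $L_f$, $f\in\mathcal{B}$, is positive. The hypothesis furnishes a strictly positive $\chi\in\mathcal{B}$, and $m$ is energy dominant for $(\mathcal{E},\mathcal{B})$ by assumption. Since all $L_f$ are positive, the remark following Theorem \ref{T:clos} permits me to drop the $C^1_c(\mathbb{R}^2)$-stability requirement, so Theorem \ref{T:clos} applies and yields that $(\mathcal{E},\mathcal{B})$ is closable in $L_2(X,\sigma(\mathcal{B}),m)$ with a Dirichlet-form closure admitting a carr\'e du champ. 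This settles the first assertion; note that $\sigma(\mathcal{D})=\sigma(\mathcal{B})$, so the ambient $L_2$-space matches the one in the statement.

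For the measurability statement the inclusion $\sigma(\mathcal{B})\subseteq\mathcal{X}$ is immediate, since every $f\in\mathcal{B}$ is $\mathcal{X}$-measurable. For the reverse inclusion I would use that the $\mu$-classes of $\mathcal{B}$ are exactly the bounded elements of $\mathcal{F}$: truncating by the contractions $T_\alpha$ (which operate, cf.\ \cite[Theorem 1.4.2]{FOT94}) shows these are $\mathcal{E}_1$-dense in $\mathcal{F}$, hence dense in $L_2(X,\mathcal{X},\mu)$. Given $A\in\mathcal{X}$ with $\mu(A)<\infty$, I would approximate $\mathbf{1}_A$ in $L_2(X,\mu)$ by a sequence from $\mathcal{B}$, pass to a $\mu$-a.e.\ convergent subsequence, and take its $\limsup$; this is a $\sigma(\mathcal{B})$-measurable function equal $\mu$-a.e.\ to $\mathbf{1}_A$, so a suitable superlevel set $B\in\sigma(\mathcal{B})$ satisfies $\mu(A\triangle B)=0$. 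Since $\mu$ is $\sigma$-finite, exhausting $X$ by sets of finite measure reduces the general case to this one.

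The remaining point, and the only place where I expect genuine care to be needed, is the upgrade from $\mu$-a.e.\ agreement to actual membership in $\sigma(\mathcal{B})$. The key observation is that every $\mu$-null set $E\in\mathcal{X}$ already lies in $\sigma(\mathcal{B})$: the function $\mathbf{1}_E$ is bounded and $\mathcal{X}$-measurable with $\mu$-class $0\in\mathcal{F}$, whence $\mathbf{1}_E\in\mathcal{B}$ and $E=\{\mathbf{1}_E>0\}\in\sigma(\mathcal{B})$. Applying this to $E:=A\triangle B$ (which is $\mathcal{X}$-measurable and $\mu$-null) gives $A=B\triangle E\in\sigma(\mathcal{B})$, so $\mathcal{X}\subseteq\sigma(\mathcal{B})$ and therefore $\sigma(\mathcal{B})=\mathcal{X}$. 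The completeness hypothesis on $(X,\sigma(\mathcal{B}),\mu)$ is exactly what guarantees that such null sets are absorbed into $\sigma(\mathcal{B})$; with it the above bookkeeping closes without any further approximation. The main obstacle is thus not the closability analysis, which is delivered wholesale by Theorems \ref{T:Dirichletform} and \ref{T:clos}, but the careful conversion of $L_2$-density into the set-theoretic identity $\sigma(\mathcal{B})=\mathcal{X}$.
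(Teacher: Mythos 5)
Your proof is correct. The analytic part coincides with the paper's: the author likewise obtains closability and the carr\'e du champ by feeding Corollary \ref{C:Dirichletform} (equivalently Theorem \ref{T:Dirichletform}) into Theorem \ref{T:clos}, with the $C^1_c(\mathbb{R}^2)$-stability hypothesis waived because positivity of the $L_f$ is already guaranteed by \cite[Proposition I.4.1.1]{BH91}. For the identity $\sigma(\mathcal{B})=\mathcal{X}$ the two arguments share the same skeleton (exhaust $X$ by the finite-measure sets $A_k$ from (\ref{E:chik}), approximate $\mathbf{1}_{A\cap A_k}$ in $L_2(X,\mathcal{X},\mu)$ by elements of $\mathcal{B}$, pass to a $\mu$-a.e.\ convergent subsequence), but they diverge at the final upgrade from ``$\mu$-a.e.\ equal to a $\sigma(\mathcal{B})$-measurable function'' to ``$\sigma(\mathcal{B})$-measurable'': the paper invokes completeness of $(X,\sigma(\mathcal{B}),\mu)$ so that a pointwise a.e.\ limit of $\sigma(\mathcal{B})$-measurable functions is again $\sigma(\mathcal{B})$-measurable, whereas you observe that any $\mu$-null set $E\in\mathcal{X}$ satisfies $\mathbf{1}_E\in\mathcal{B}$ outright (its $\mu$-class is $0\in\mathcal{F}$), hence $E=\{\mathbf{1}_E>0\}\in\sigma(\mathcal{B})$, and then correct $B$ by the null set $A\triangle B$. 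Your route is the sharper one: it shows the completeness hypothesis in the statement is actually redundant, since the null set you need to absorb is $\mathcal{X}$-measurable and is handled directly by the definition (\ref{E:B}) of $\mathcal{B}$, not by completeness. The only blemish is expository: your closing sentence attributes this absorption to the completeness of $(X,\sigma(\mathcal{B}),\mu)$, but the argument you actually give never uses that hypothesis — completeness would only be needed to absorb arbitrary subsets of $\sigma(\mathcal{B})$-null sets, which is the paper's mechanism, not yours. You should either drop that remark or state explicitly that your variant dispenses with the completeness assumption.
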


Note that the existence of $\chi>0$ as in the corollary forces the $\sigma$-finiteness of $\mu$ by (\ref{E:chik}). We only use the redundant formulation to comply with the definition of Dirichlet forms as in \cite{BH91}.

\begin{proof}
We comment only on the last sentence, everything else is immediate from Theorem \ref{T:clos} and Corollary \ref{C:Dirichletform}. The inclusion $\sigma(\mathcal{B})\subset \mathcal{X}$ is trivial. To prove the converse, recall that by (\ref{E:chik}) there is a sequence of sets $A_k\in \mathcal{X}\cap\sigma(\mathcal{B})$ with $X=\bigcup_{k=1}^\infty A_k$ and $\mu(A_k)<+\infty$, $A_k\subset A_{k+1}$, $k\geq 1$. Clearly $\mathcal{B}$ is a dense subset of $\mathcal{F}$ (in the sense of equivalence classes) and therefore also of $L_2(X,\mathcal{X},\mu)$. Let $A\in\mathcal{X}$. For any $k$ we can find a sequence $(f_n^{(k)})_n\subset \mathcal{B}^+$ converging to $\mathbf{1}_{A\cap A_k}$ in $L_2(X,\mathcal{X},\mu)$ and having a subsequence $(f_{n_l}^{(k)})_l$ such that
\[\lim_l f_{n_l}^{(k)}=\mathbf{1}_{A\cap A_k}\ \ \text{$\mu$-a.e. on $A_k$}.\]
Since $\mathbf{1}_{A_k}f_n\in\sigma(\mathcal{B})$ and $(X,\sigma(\mathcal{B}),\mu)$ is complete, the function $\mathbf{1}_{A\cap A_k}$ is $\sigma(\mathcal{B})$-measurable. As
$\lim_{k}\mathbf{1}_{A\cap A_k}=\mathbf{1}_A$ we obtain $A\in\sigma(\mathcal{B})$. 
\end{proof}

\begin{remark}\mbox{} 
\begin{enumerate}
\item[(i)] If $\mathcal{X}$ is generated by a countable semiring and there exists an energy dominant measure for $(\mathcal{E},\mathcal{B})$ then the measure in (\ref{E:standardsum}) is energy dominant, cf. Remark \ref{R:energysep} and Theorem \ref{T:energydom}.
\item[(ii)] We have $\sigma(\mathcal{B})=\mathcal{X}$ if $\mathcal{X}$ is generated by a countable collection $\left\lbrace f_n\right\rbrace_n$ of bounded real valued functions with $\mathcal{E}(f_n)<+\infty$ for all $n$. 
\item[(iii)] If $X$ is a locally compact separable metric space, $(\mathcal{E},\mathcal{F})$ is a regular Dirichlet form and $\mathcal{D}$ contains a core of $(\mathcal{E},\mathcal{F})$, then $\sigma(\mathcal{D})$ will again be the Borel $\sigma$-algebra over $X$.
\item[(iv)] Note that Corollary \ref{C:changeofmeasure} does not need a topology on $X$. However, it is related to well known change of measure results for regular Dirichlet forms: For a regular Dirichlet form $(\mathcal{E},\mathcal{F})$ on a locally compact separable metric space $X$ as in Remark \ref{R:lcccase} the statement of Corollary \ref{C:changeofmeasure} can be shown either by using the Beurling-Deny representation of $(\mathcal{E},\mathcal{F})$ or by considering the $\mu$-symmetric Hunt process on $X$ uniquely associated with $(\mathcal{E},\mathcal{F})$. The change of reference measure from $\mu$ to $m$ corresponds to a time change for this process. The measure $m$ defined as in (\ref{E:standardsum}) does not charge sets of zero capacity but has full quasi-support, at least if $(\mathcal{E},\mathcal{F})$ is irreducible or transient. We may then perform the change of measure by the probabilistic arguments given in \cite[Section 5, in particular Theorem 5.3]{FLJ} and \cite{FST}. See also \cite{KN91}, \cite[
Section 6.2, p. 275]{FOT94}, \cite[Corollary 5.2.10]{ChF12} and \cite[Section 5]{HRT}.
\end{enumerate}
\end{remark}

\section{Bilinear forms and Lagrangians on the Gelfand spectrum}\label{S:Gelfand}

As before, let $\mathcal{D}$ be an algebra of bounded real valued functions on a nonempty set $X$, endowed with the supremum norm. For many bilinear forms $(\mathcal{E},\mathcal{D})$ energy dominant measures on the space $X$ might not exist or we may just not be able to prove their existence. However, we can always transfer $(\mathcal{E},\mathcal{D})$ to a 'compactification' of $X$, and for the transferred form energy measures do exist. 

Let $\mathcal{A}(\mathcal{D})$ denote the commutative $C^\ast$-algebra generated by the natural complexification of $\mathcal{D}$ and write $\Delta$ to denote the \emph{Gelfand spectrum} of $\mathcal{A}(\mathcal{D})$, that is the space of nonzero complex valued multiplicative linear functionals on $\mathcal{A}(\mathcal{D})$. The Gelfand spectrum $\Delta$ is a locally compact Hausdorff space. It is second countable whenever $\mathcal{D}$ is countably generated. For any $f\in\mathcal{A}(\mathcal{D})$ the Gelfand transform $\hat{f}:\Delta\to\mathbb{C}$ of $f$ is defined by $\hat{f}(\varphi):=\varphi(f)$, $\varphi\in\Delta$. According to the Gelfand representation theorem the map $f\mapsto \hat{f}$ defines an $^\ast$-isomorphism from $\mathcal{A}(\mathcal{D})$ onto the algebra $C_{\mathbb{C},0}(\Delta)$ of complex valued continuous functions on $\Delta$ that vanish at infinity. If $\mathcal{D}$ vanishes nowhere on $X$ then the image $\iota(X)$ of $X$ under the evaluation map $\iota:X\to\Delta$, $\iota(x)(f):=f(x)
$, is dense in $\Delta$. The algebra $\hat{\mathcal{D}}:=\left\lbrace \hat{f}\in C(\Delta):f\in\mathcal{D}\right\rbrace$ is uniformly dense in the subalgebra $C_0(\Delta)$ of real valued continuous functions on $\Delta$ vanishing at infinity. A positive linear functional $L:\mathcal{D}\to\mathbb{R}$ naturally extends to a positive linear functional $\hat{L}$ on $\hat{D}$ by $\hat{L}(\hat{f}):=L(f)$ and by boundedness further to $C_0(\Delta)$. If $(\mathcal{E},\mathcal{D})$ a given bilinear form then
\[\hat{\mathcal{E}}(\hat{f},\hat{g}):=\mathcal{E}(f,g), \ \ \hat{f},\hat{g}\in\hat{\mathcal{D}}\]
defines a bilinear form $\hat{\mathcal{E}}$ on $\hat{\mathcal{D}}$. To $(\hat{\mathcal{E}},\hat{\mathcal{D}})$ we refer as the \emph{transferred form}. If $f\in\mathcal{D}$ and $F$ is a normal contraction, then there exists a sequence $(p_n)_n$ of polynomials such that
$F(\hat{f})=\lim_n p_n(f)^\wedge =\lim_n (p_n(f))^\wedge =(F(f))^\wedge \in \hat{\mathcal{D}}$,
the limits being uniform. This also implies $\hat{\mathcal{E}}(F(\hat{f}))=\hat{\mathcal{E}}((F(f))^\wedge)=\mathcal{E}(F(f))\leq \mathcal{E}(f)=\hat{\mathcal{E}}(\hat{f})$,
i.e. normal contractions operate on $(\hat{\mathcal{E}},\hat{\mathcal{D}})$. Furthermore, it is immediate to see that $(\hat{\mathcal{E}},\hat{\mathcal{D}})$ is sup-norm-closable if and only if $(\mathcal{E},\mathcal{D})$ is. Similar statements holds for energy separability and $C^1(\mathbb{R}^2)$-stability.

Now the following is an immediate consequence of Remark \ref{R:lcccase} (i).

\begin{theorem}\label{T:closGelfand}
Let $\mathcal{D}$ be a $C^1_c(\mathbb{R}^2)$-stable algebra containing a strictly positive function $\chi>0$ and let $(\mathcal{E},\mathcal{D})$ be an energy separable bilinear form on which normal contractions operate. Then there exists an energy dominant (Radon) measure $\hat{m}$ for the transferred form $(\hat{\mathcal{E}},\hat{\mathcal{D}})$. If in addition $(\mathcal{E},\mathcal{D})$ is sup-norm-closable, then $(\hat{\mathcal{E}},\hat{\mathcal{D}})$ extends to a Dirichlet form on $L_2(\Delta,\hat{m})$ that admits a carr\'e du champ. The space $\hat{\mathcal{D}}$ is dense in $\hat{\mathcal{F}}$ and dense in the space $C_c(\Delta)$ of continuous compactly supported functions on $\Delta$.
\end{theorem}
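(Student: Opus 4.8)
The plan is to push everything forward to the Gelfand spectrum $\Delta$ and then read off the conclusion from Remark~\ref{R:lcccase}~(i) and Corollary~\ref{C:domandclos}, applied on the locally compact Hausdorff space $X:=\Delta$. As recorded just before the statement, the transferred form $(\hat{\mathcal{E}},\hat{\mathcal{D}})$ inherits from $(\mathcal{E},\mathcal{D})$ that normal contractions operate, $C^1_c(\mathbb{R}^2)$-stability, energy separability and --- once assumed --- sup-norm-closability; moreover $\hat{\mathcal{D}}$ is a uniformly dense subalgebra of $C_0(\Delta)$. Since $C_c(\Delta)$ is itself sup-norm dense in $C_0(\Delta)$, this already gives density of $\hat{\mathcal{D}}$ in $C_c(\Delta)$, while density of $\hat{\mathcal{D}}$ in $\hat{\mathcal{F}}$ holds by construction, $\hat{\mathcal{F}}$ being the completion of $\hat{\mathcal{D}}$ in the Dirichlet form norm. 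Thus the two density assertions are immediate and it remains to produce $\hat m$ and the carr\'e du champ.

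For $\hat m$ I would first transfer the Lagrangian functionals. Since $\iota(X)$ is dense in $\Delta$ (here $\chi>0$ guarantees that $\mathcal{D}$ vanishes nowhere), continuity gives $\hat h\geq 0$ on $\Delta$ if and only if $h\geq 0$ on $X$, so positivity of $L_f$ --- which is implicit in energy separability through the existence of the measures $\Gamma(f)$, and which under sup-norm-closability also follows from Theorem~\ref{T:positive} --- yields positivity of $\hat L_{\hat f}$, with $\hat L_{\hat f}(\hat h)=L_f(h)$. Each $\hat L_{\hat f}$ is a positive functional on $\hat{\mathcal{D}}$, bounded by $2\hat{\mathcal{E}}(\hat f)=2\mathcal{E}(f)$ through (\ref{E:Lagrangesupbound}), hence extends to $C_0(\Delta)$ and is represented by a finite Radon measure $\hat\Gamma(\hat f)$. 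This is exactly the setting of Remark~\ref{R:lcccase}~(i): the energy measures are finite Radon measures and energy separability turns (\ref{E:standardsum}) into a finite energy dominant Radon measure $\hat m$ on $\Delta$, proving the first assertion.

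For the Dirichlet form assertion I would invoke Corollary~\ref{C:domandclos} (equivalently, run the proof of Theorem~\ref{T:clos}) for $(\hat{\mathcal{E}},\hat{\mathcal{D}})$ on $\Delta$, now with sup-norm-closability available. Every hypothesis transfers directly save one, which I expect to be the main obstacle: Corollary~\ref{C:domandclos} requires $\hat{\mathcal{D}}$ to contain a strictly positive function, whereas $\hat\chi$ vanishes at infinity and, when $\Delta$ is not $\sigma$-compact, $C_0(\Delta)$ admits no strictly positive function at all. I would circumvent this by noting that strict positivity enters the proof of Theorem~\ref{T:clos} only to secure two facts: the finiteness of the energy measures (via Theorem~\ref{T:intrepadd}) and the density of the domain in $L_2$ (via the cutoffs $\chi_k$ of (\ref{E:chik})). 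On the locally compact Hausdorff space $\Delta$ both hold unconditionally: finiteness of $\hat\Gamma(\hat f)$ was just obtained from the Riesz representation, and $\hat{\mathcal{D}}$ is dense in $L_2(\Delta,\hat m)$ because it is uniformly dense in $C_0(\Delta)$ while $C_c(\Delta)$ is $L_2(\Delta,\hat m)$-dense for the finite Radon measure $\hat m$. With these two points in hand the uniform-integrability estimate (Lemma~\ref{L:UI}), the passage to a sup-norm null subsequence (Proposition~\ref{P:subseq}) and the concluding closability step of Theorem~\ref{T:clos} carry over verbatim, so the closure of $(\hat{\mathcal{E}},\hat{\mathcal{D}})$ on $L_2(\Delta,\hat m)$ is a Dirichlet form all of whose energy measures $\hat\Gamma(\hat f)$ admit $\hat m$-integrable densities, i.e. it admits a carr\'e du champ.
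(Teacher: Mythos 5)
Your proof is correct and follows the same route as the paper, which simply declares the theorem an immediate consequence of Remark \ref{R:lcccase} (i) after noting that all hypotheses transfer to $(\hat{\mathcal{E}},\hat{\mathcal{D}})$ on the locally compact Hausdorff space $\Delta$. In fact you are more careful than the paper: your observation that $\hat\chi$ need not be strictly positive on $\Delta$ (the Gelfand transform of a strictly positive function can vanish at characters ``at infinity''), together with your replacement of that hypothesis by the Riesz representation of the bounded positive functionals $\hat{L}_{\hat{f}}$ and the $L_2(\Delta,\hat{m})$-density of $\hat{\mathcal{D}}$ obtained from its uniform density in $C_0(\Delta)$, closes a gap that the paper's one-line justification glosses over.
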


Together with Theorem \ref{T:Dirichletform} and Proposition \ref{P:LtoEL} we now obtain a proof of Corollary \ref{C:ELtoL}.

\begin{proof}
By Theorem \ref{T:Dirichletform} the Lagrangian $(\hat{L},\hat{\mathcal{D}})$, given by $L_{\hat{f}}(\hat{h})=2\hat{\mathcal{E}}(\hat{f}\hat{h},\hat{f})-\hat{\mathcal{E}}(\hat{f}^2,\hat{h})$, $\hat{f},\hat{h}\in\hat{\mathcal{D}}$,
is sup-norm-closable. By the properties of the Gelfand transform we have $L_f(h)=L_{\hat{f}}(\hat{h})$, $f,h\in\mathcal{D}$, for the Lagrangian $(L,\mathcal{D})$ generated by $(\mathcal{E},\mathcal{D})$.
\end{proof}

\begin{remark}\label{R:pullback}
\item[(i)] In general it may not be possible to pull the measure $\hat{m}$ back to a measure on the space $X$, see \cite{HKT} for a counterexample.
\item[(ii)] Assume that $\mathcal{D}$ vanishes nowhere on $X$ and separates the points of $X$. Then the map $\iota$ is injective and $X$ is (densely) embedded in $\Delta$ as $\iota(X)$. If $\hat{d}$ is a metric on $\mathcal{D}$, then its pull-back $d(x,y):=\hat{d}(\iota(x),\iota(y))$, $x,y\in X$, defines a metric on $X$. One interesting idea may be to consider the situation when  $\Delta$ is metrizable (for instance if $\mathcal{D}$ is countably generated and $\mathbf{1}\in \mathcal{D}$) and $\hat{d}$ metrizes the Gelfand topology. Another interesting direction might be to consider intrinsic metrics on $\Delta$ in the pointwise sense, see e.g. \cite{Sturm94, Sturm95} or in the measurable sense, see \cite{HinoRam}. Roughly speaking \cite[Theorem 1.2]{HinoRam} implies that in the (strongly) local case (cf. \cite[Section I.5]{BH91}) and with respect to any finite energy dominant measure and corresponding intrinsic metric in the measurable sense the transferred form $(\hat{\mathcal{E}},\hat{\mathcal{F}})$ 
admits Gaussian short time asymptotics on $\Delta$.
\end{remark}

\section{Sup-norm-lower semicontinuity}\label{S:lsc}

In this section we aim to compare sup-norm-closability and sup-norm-lower semicontinuity of bilinear forms.  

\begin{definition}
Let $\mathcal{D}$ be a space of bounded real valued functions on a nonempty set $X$ endowed with the supremum norm and denote its completion by $\overline{\mathcal{D}}$. Let $\mathcal{E}:\overline{\mathcal{D}}\to [0,+\infty]$ be a quadratic (extended real valued) functional such that $\mathcal{E}(f)<+\infty$ for any $f\in\mathcal{D}$. We say that $\mathcal{E}$ is \emph{sup-norm-lower semicontinuous} if for any sequence $(f_n)_n\subset \overline{\mathcal{D}}$ with uniform limit $\lim_n f_n=f$ we have $\mathcal{E}(f)\leq \liminf_n \mathcal{E}(f_n)$.
\end{definition}

If $\mathcal{D}$, $\overline{\mathcal{D}}$ and $\mathcal{E}$ are as in the definition, then polarization yields a bilinear form $(\mathcal{E},\mathcal{D})$ in the sense of the preceding sections. 

\begin{lemma}\label{L:suplsctosupclos}
If $\mathcal{E}$ is sup-norm-lower semicontinuous then $(\mathcal{E},\mathcal{D})$ is sup-norm-closable.
\end{lemma}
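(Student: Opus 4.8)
The plan is to run the standard lower-semicontinuity argument, holding one index fixed while sending the other to infinity. Let $(f_n)_n\subset\mathcal{D}$ be an $\mathcal{E}$-Cauchy sequence with $\lim_n\|f_n\|_{\sup}=0$; the goal is to show $\lim_n\mathcal{E}(f_n)=0$. First I would fix $\varepsilon>0$ and use the $\mathcal{E}$-Cauchy property to choose $N=N(\varepsilon)$ such that $\mathcal{E}(f_n-f_m)<\varepsilon$ for all $m,n\geq N$. Note that each $f_n-f_m$ lies in $\mathcal{D}\subset\overline{\mathcal{D}}$, so these energies are finite and the choice of $N$ makes sense.

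The key step is to fix $n\geq N$ and consider the sequence $(f_n-f_m)_{m\geq N}$ as $m\to\infty$. Since
\[
\|(f_n-f_m)-f_n\|_{\sup}=\|f_m\|_{\sup}\xrightarrow[m\to\infty]{}0,
\]
this sequence converges uniformly to $f_n\in\mathcal{D}\subset\overline{\mathcal{D}}$. Sup-norm-lower semicontinuity of $\mathcal{E}$ then yields
\[
\mathcal{E}(f_n)\leq\liminf_{m\to\infty}\mathcal{E}(f_n-f_m)\leq\varepsilon,
\]
where the last inequality holds because $\mathcal{E}(f_n-f_m)<\varepsilon$ for every $m\geq N$. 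Since this bound is valid for all $n\geq N$ and $\varepsilon>0$ was arbitrary, we conclude $\mathcal{E}(f_n)\to 0$, which is precisely sup-norm-closability of $(\mathcal{E},\mathcal{D})$.

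There is essentially no serious obstacle here: the only point requiring a little care is recognizing which sequence to feed into the lower-semicontinuity hypothesis. The trick is to keep $n$ fixed so that the uniform limit of $(f_n-f_m)_m$ is exactly $f_n$ (using $\|f_m\|_{\sup}\to 0$), while the $\mathcal{E}$-Cauchy estimate simultaneously controls $\liminf_m\mathcal{E}(f_n-f_m)$ by $\varepsilon$. Note that the argument uses only the extended quadratic functional on $\overline{\mathcal{D}}$ together with finiteness of $\mathcal{E}$ on $\mathcal{D}$, and does not invoke any lattice, algebra, or contraction structure.
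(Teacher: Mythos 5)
Your argument is correct and is essentially identical to the paper's proof: both fix $n$, note that $f_n-f_m\to f_n$ uniformly as $m\to\infty$, apply sup-norm-lower semicontinuity to get $\mathcal{E}(f_n)\le\liminf_m\mathcal{E}(f_n-f_m)$, and then invoke the $\mathcal{E}$-Cauchy property to bound the right-hand side by $\varepsilon$ for $n$ large. Your write-up just spells out the choice of $N$ more explicitly.
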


\begin{proof}
Let $(f_n)_n\subset\mathcal{D}$ be an $\mathcal{E}$-Cauchy sequence such that $\lim_n f_n=0$ uniformly. Then for any $n$ we have $\lim_m f_n-f_m=f_n$, hence $\mathcal{E}(f_n)\leq\liminf_m\mathcal{E}(f_n-f_m)$. Given $\varepsilon>0$ therefore $\mathcal{E}(f_n)<\varepsilon$ whenever $n$ is sufficiently large.
\end{proof}

Under additional assumptions we obtain the following result.

\begin{theorem}
Let $\mathcal{D}$ be an algebra containing a strictly positive function $\chi>0$ and let $\overline{\mathcal{D}}$ denote its completion in uniform norm. Let $\mathcal{E}$ be an nonnegative extended real valued quadratic functional on $\overline{\mathcal{D}}$, finite on $\mathcal{D}$ and such that normal contractions operate on $(\mathcal{E},\mathcal{D})$. If $m$ is a finite energy dominant measure for $(\mathcal{E},\mathcal{D})$ then the following are equivalent:
\begin{enumerate}
\item[(i)] The form $(\mathcal{E},\mathcal{D})$ is sup-norm closable.
\item[(ii)] The form $(\mathcal{E},\mathcal{D})$ is closable in $L_2(X,m)$.
\item[(iii)] The functional $\mathcal{E}$ is lower semicontinuous in the $L_2(X,m)$-sense.
\item[(iv)] The functional $\mathcal{E}$ is sup-norm-lower semicontinuous.
\end{enumerate}
\end{theorem}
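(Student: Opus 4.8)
The plan is to establish the four properties as equivalent by running the cycle
$$(\mathrm{i})\Rightarrow(\mathrm{ii})\Rightarrow(\mathrm{iii})\Rightarrow(\mathrm{iv})\Rightarrow(\mathrm{i}),$$
so that a single implication suffices between each consecutive pair. Two of the arrows are already in hand. The arrow $(\mathrm{iv})\Rightarrow(\mathrm{i})$ is precisely Lemma \ref{L:suplsctosupclos}, which needs none of the standing hypotheses. The arrow $(\mathrm{i})\Rightarrow(\mathrm{ii})$ is Theorem \ref{T:clos}, and I would first check that this theorem is applicable even though $C^1_c(\mathbb{R}^2)$-stability is not assumed here: by definition, the hypothesis that $m$ is energy dominant for $(\mathcal{E},\mathcal{D})$ presupposes that $(\mathcal{E},\mathcal{D})$ generates a Lagrangian in the sense of Definition \ref{D:Lagrange}, so that all functionals $L_f$, $f\in\mathcal{D}$, are positive; and, as recorded in the remark following Theorem \ref{T:clos}, positivity of the $L_f$ permits the $C^1_c(\mathbb{R}^2)$-stability assumption to be dropped. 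Thus $(\mathcal{E},\mathcal{D})$ closes to a Dirichlet form $(\mathcal{E},\mathcal{F})$ on $L_2(X,m)$.

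The arrow $(\mathrm{iii})\Rightarrow(\mathrm{iv})$ is the step that genuinely exploits the finiteness of $m$ and is essentially immediate: if $(f_n)_n\subset\overline{\mathcal{D}}$ converges uniformly to $f$, then
$$\left\|f_n-f\right\|_{L_2(X,m)}\leq m(X)^{1/2}\,\left\|f_n-f\right\|_{\sup}\longrightarrow 0,$$
so uniform convergence forces $L_2(X,m)$-convergence, and $L_2(X,m)$-lower semicontinuity of $\mathcal{E}$ yields $\mathcal{E}(f)\leq\liminf_n\mathcal{E}(f_n)$, which is exactly sup-norm-lower semicontinuity. The same observation, applied to an $\mathcal{E}$-Cauchy sequence tending to zero uniformly, also gives the direct implication $(\mathrm{ii})\Rightarrow(\mathrm{i})$, so that $(\mathrm{i})\Leftrightarrow(\mathrm{ii})$ holds independently of the cycle and finiteness of $m$ is the crucial ingredient linking the uniform and $L_2$ topologies.

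It remains to treat $(\mathrm{ii})\Rightarrow(\mathrm{iii})$, which I expect to be the main obstacle. Here I would invoke the classical correspondence between closability and lower semicontinuity for nonnegative symmetric forms on a Hilbert space, cf. \cite{RS}: a nonnegative symmetric form is closable precisely when its energy functional, suitably extended to $L_2(X,m)$ by $+\infty$ off its domain, admits an $L_2(X,m)$-lower semicontinuous representative; the closure $(\mathcal{E},\mathcal{F})$ produced in step $(\mathrm{i})\Rightarrow(\mathrm{ii})$ thus furnishes such a lower semicontinuous functional. The delicate point is that the functional appearing in $(\mathrm{iii})$ is the abstractly given quadratic functional $\mathcal{E}$ on the \emph{uniform} completion $\overline{\mathcal{D}}$, whereas the closure functional lives on the \emph{$\mathcal{E}_1$-}completion $\mathcal{F}$; these two domains and their energies must be reconciled before lower semicontinuity of the one can be transferred to the other. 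I would carry this out using the energy dominant measure: since the closure admits a carr\'e du champ, its energy is represented through the densities $\frac{d\Gamma(f)}{dm}$ of the energy measures, and the absolute continuity $\Gamma(f)\ll m$ together with the bound (\ref{E:Lagrangesupbound}) and the continuity relation (\ref{E:contmeasure}) identifies the value of $\mathcal{E}$ on $\overline{\mathcal{D}}$ with the corresponding closure energy on $m$-equivalence classes.

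Granting this identification, lower semicontinuity of the closure functional transfers to $\mathcal{E}$, property $(\mathrm{iii})$ follows, and the cycle closes. I regard the positivity check needed in step $(\mathrm{i})\Rightarrow(\mathrm{ii})$ as a minor preliminary by comparison; the real work is the matching of the uniform and $L_2$ pictures under the energy dominant measure in the passage $(\mathrm{ii})\Rightarrow(\mathrm{iii})$, since everything else reduces either to previously established results (Lemma \ref{L:suplsctosupclos}, Theorem \ref{T:clos}) or to the elementary domination of the $L_2$-norm by the supremum norm afforded by the finiteness of $m$.
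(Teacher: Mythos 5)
Your proposal follows essentially the same route as the paper: the published proof consists of exactly the same four arrows, namely (i)$\Leftrightarrow$(ii) via Theorems \ref{T:clos} and \ref{T:Dirichletform}, (iv)$\Rightarrow$(i) via Lemma \ref{L:suplsctosupclos}, (iii)$\Rightarrow$(iv) from the finiteness of $m$, and (ii)$\Rightarrow$(iii) as the standard lower semicontinuity of closed forms. Your additional care in (ii)$\Rightarrow$(iii) about reconciling the given functional on $\overline{\mathcal{D}}$ with the energy of the $L_2(X,m)$-closure is a point the paper silently elides, but it does not alter the approach.
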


\begin{proof}
The equivalence of (i) and (ii) is already established by Theorems \ref{T:clos} and \ref{T:Dirichletform}. By Lemma \ref{L:suplsctosupclos} (iv) implies (i). Because $m$ is finite, (iv) follows from (iii). The remaining implication from (ii) to (iii) is just the standard statement that Dirichlet forms are lower semicontinuous. 
\end{proof}

\begin{remark}
In the locally compact and separable case the equivalence of (ii), (iii) and (iv) already follows from \cite{Mo95}. 
\end{remark}

\section*{Appendix}

We sketch a proof of Theorem \ref{T:positive2}. Our aim is to apply the method of Andersson \cite{And75}, and to do so we equip the coordinate forms $E^{f,g}$ with a somewhat artificial but suitable domain. Given $f\in\mathcal{D}$ let $I^f\subset\mathbb{R}$ be an bounded open interval such that $[-\left\|f\right\|_{\sup},\left\|f\right\|_{\sup}]\subset I^f$.

First consider $\lip(I^f)\otimes \lip(I^g)$, any member of this space may be viewed as a finite linear combination $(x_1,x_2)\mapsto \sum_i \varphi_i^1(x_1)\varphi_i^2(x_2)$ with $\varphi_i^1\in \lip(I^f)$ and $\varphi_i^2\in \lip(I^g)$. The linear extension of the definition \[(\varphi^1\otimes\varphi^2)(\psi^1\otimes\psi^2):=(\varphi^1\psi^1)\otimes (\varphi^2\psi^2):=(\varphi^1\psi^1)\otimes (\varphi^2\psi^2)\]
turns $\lip(I^f)\otimes \lip(I^g)$ into an algebra, and by $\mathcal{R}_0(\lip(I^f)\otimes \lip(I^g))$ we denote the algebra generated by all bounded functions of type $p(\varphi)(q(\psi))^{-1}$, where $\varphi,\psi\in \lip(I^f)\otimes \lip(I^g)$ with $\varphi(0)=\psi(0)=0$ and $p$ and $q$ are polynomials with $p=0$ and $\left\lbrace q=0\right\rbrace\subset \left\lbrace p=0\right\rbrace$. Due to the invertibility condition in Theorem \ref{T:positive2} this algebra is contained in $\mathcal{D}$. Finally, let $\mathcal{V}^{f,g}\subset \mathcal{D}$ denote the space of all its Lipschitz transformations,
\[\mathcal{V}^{f,g}:=\left\lbrace F(\Phi):\ \Phi\in \mathcal{R}_0(\lip(I^f)\otimes \lip(I^g)), \ \ F\in \lip(\mathbb{R}),\ F(0)=0\right\rbrace.\]
By Stone-Weierstrass $\mathcal{V}^{f,g}$ is a uniformly dense subalgebra of $C_0(I^{f,g}_0)$.
Similarly as before consider
\[E^{f,g}(F,G):=\mathcal{E}(F(f,g),G(f,g)),\ \ F,G\in \mathcal{V}^{f,g},\]
which yields a symmetric and nonnegative definite bilinear on which normal contractions operate. 
Therefore we have the Beurling-Deny decomposition (\ref{E:BD}), but some additional arguments are needed to arrive at the measure representation (\ref{E:measurerep}) for the strongly local part. Slight modifications of \cite[Theorems 2.2 and 2.3]{And75} yield the following estimate. 

\begin{lemma}\label{L:aprioribound}
For any compact set $K\subset I_0^{f,g}$ there exists a constant $C_K>0$ such that 
\begin{equation}\label{E:aprioribound}
N^{f,g}(F,G)\leq C_K\:\left\|\nabla F\right\|_{\sup}\left\|\nabla G\right\|_{\sup}
\end{equation}
whenever $F,G\in \mathcal{V}^{f,g}\cap  C_c^1(I^{f,g}_0)$ are such that $\supp F, \supp G\subset K$.
\end{lemma}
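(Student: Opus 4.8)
The statement is the a priori gradient bound for the strongly local part $N^{f,g}$ that, via the Riesz representation theorem, underlies the measure representation (\ref{E:measurerep}); here it must be established \emph{before} that representation, as the analytic input from which it is deduced. The plan is to reproduce the argument of \cite[Theorems 2.2 and 2.3]{And75}, drawing all test functions from the quotient algebra $\mathcal{V}^{f,g}$ rather than from $C_c^1(I_0^{f,g})$. Since $N^{f,g}$ is nonnegative definite, Cauchy--Schwarz reduces the bilinear estimate to the quadratic one: it suffices to produce a constant $C_K$ with $N^{f,g}(F)\leq C_K\left\|\nabla F\right\|_{\sup}^2$ for every $F\in\mathcal{V}^{f,g}\cap C_c^1(I_0^{f,g})$ supported in $K$, because then $N^{f,g}(F,G)\leq N^{f,g}(F)^{1/2}N^{f,g}(G)^{1/2}\leq C_K\left\|\nabla F\right\|_{\sup}\left\|\nabla G\right\|_{\sup}$.

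First I would realize the auxiliary functions needed by Andersson's argument inside $\mathcal{V}^{f,g}$. Fix a cutoff $\theta\in\mathcal{V}^{f,g}$ with $\theta\equiv 1$ on an open neighborhood of $K$ and compact support in $I_0^{f,g}$; such $\theta$ exists because $\mathcal{V}^{f,g}$ is uniformly dense in $C_0(I_0^{f,g})$ and is stable under Lipschitz transformations, so a suitable element may be composed with a Lipschitz plateau function. The invertibility hypothesis of Theorem \ref{T:positive2} is exactly what guarantees that the further functions entering the argument---the localized coordinate multipliers $\theta\pi_i$ and the products $\theta\pi_i\pi_j$, $i,j=1,2$---again lie in $\mathcal{V}^{f,g}$, since they are bounded quotients of polynomials in elements of $\lip(I^f)\otimes\lip(I^g)$ composed with Lipschitz maps. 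The finite quantities $N^{f,g}(\theta\pi_i,\theta\pi_j)$ are then well defined and, exactly as in Lemma \ref{L:supports}, depend on $F$ only through its restriction to $K$; their sizes will furnish $C_K$.

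With these functions in hand I would run Andersson's localization. Strong locality of $N^{f,g}$ lets us insert the cutoff $\theta$ freely (so that, e.g., $\theta F=F$, while the coordinate functions may be replaced by the compactly supported $\theta\pi_i$ without changing the relevant values of $N^{f,g}$), and, combined with the monotonicity of $N^{f,g}$ under Lipschitz post-composition that the strongly local part inherits as in \cite{And75}, it reduces the local energy of $F$ on $K$ to a comparison with the coordinate functions. Concretely, substituting $F$ into the identities of the type (\ref{E:Anderssontrick}) and estimating the resulting expressions built from the localized multipliers $\theta\pi_i$ and $\theta\pi_i\pi_j$ bounds $N^{f,g}(F)$ by $\left\|\nabla F\right\|_{\sup}^2$ times a finite constant expressible through the quantities $N^{f,g}(\theta\pi_i,\theta\pi_j)$. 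Setting $C_K$ equal to this constant and invoking the Cauchy--Schwarz reduction of the first paragraph then completes the proof.

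The main obstacle is twofold and lies precisely in the phrase ``slight modifications.'' First, every cutoff, multiplier, and comparison function used in Andersson's original $C_c^1$-argument must be rebuilt inside $\mathcal{V}^{f,g}$; this membership bookkeeping---carried out through bounded quotients $p(\varphi)(q(\psi))^{-1}$ followed by Lipschitz transformation and uniform approximation---is where the invertibility condition of Theorem \ref{T:positive2} does the work that $C_c^1(\mathbb{R}^2)$-stability did in Theorem \ref{T:positive}. Second, because $\mathcal{V}^{f,g}$ carries only Lipschitz regularity, Andersson's smoothing and localization steps must be justified by the monotone- and dominated-convergence arguments already deployed in Lemma \ref{L:mainstatement}, together with the sup-norm-closability of $(\mathcal{E},\mathcal{D})$; controlling the cross terms of the Beurling--Deny decomposition uniformly under these approximations is the technical heart of the estimate.
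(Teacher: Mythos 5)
There is a genuine gap. The heart of Lemma \ref{L:aprioribound} is Andersson's localization machinery, which your proposal omits entirely: one must first consider the exceptional set $E$ of points $x\in I_0^{f,g}$ near which $N^{f,g}$ is \emph{not} locally dominated by the square of the gradient (i.e.\ every neighborhood of $x$ supports some $\psi$ with $\left\|\nabla\psi\right\|_{\sup}\leq 1$ but $N^{f,g}(\psi)\geq 1$), prove that $E$ is locally finite (the analog of \cite[Lemma 3.1]{And75}), obtain the bound on compacts $K\subset I_0^{f,g}\setminus E$ by covering $K$ with finitely many good neighborhoods and summing against a partition of unity drawn from $\mathcal{V}^{f,g}\cap C^1(I_0^{f,g})$ (the analog of \cite[Lemma 3.2]{And75}), and then separately control the contribution near the finitely many exceptional points and near the origin via the analogs of \cite[Lemmas 3.3--3.5]{And75} (this is where the modulus $\varphi(r)=\sup_{|x|\leq r}\max\{|\partial F/\partial x_1|,|\partial F/\partial x_2|\}$ enters). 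Your argument has no mechanism for ruling out or handling such exceptional points: a priori nothing prevents $N^{f,g}$ from blowing up relative to $\left\|\nabla F\right\|_{\sup}^2$ at isolated points of $K$, and the contraction property (monotonicity under Lipschitz post-composition) only compares $N^{f,g}(F(u))$ with $N^{f,g}(u)$, which is not a bound of the required form.

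Moreover, the step you lean on --- ``substituting $F$ into the identities of the type (\ref{E:Anderssontrick})'' to bound $N^{f,g}(F)$ by $\left\|\nabla F\right\|_{\sup}^2$ times quantities $N^{f,g}(\theta\pi_i,\theta\pi_j)$ --- is circular. The identity (\ref{E:Anderssontrick}) expresses the \emph{linear} functional $F\mapsto\int F\,d\sigma^{f,g}_{ij}$ in terms of $N^{f,g}$ and presupposes the measure representation (\ref{E:measurerep}); but that representation is obtained (via \cite[Theorem 2.4]{And75}) precisely \emph{from} the a priori bound (\ref{E:aprioribound}) you are trying to prove. It also does not control the quadratic form $N^{f,g}(F)$ itself. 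Your bookkeeping on realizing cutoffs and partitions of unity inside $\mathcal{V}^{f,g}$ via the invertibility hypothesis is on target and is indeed the new ingredient relative to \cite{And75}, and the Cauchy--Schwarz reduction to the diagonal is fine; but without the locally finite exceptional set and the treatment of its points the proof does not go through.
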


\begin{proof}
We can follow Andersson \cite[Section 3]{And75}, the only news being that we choose cut off functions from $\mathcal{V}^{f,g}\cap  C_c^1(I^{f,g}_0)$. For any open neighborhood $U_x\subset I_0^{f,g}$ of a point $x=(x_1,x_2)\in I^{f,g}_0$ we can find open intervals $J^f\subset I^f$ and $J^g\subset I^g$ conatining $x_1$ and $x_2$, respectively, and such that $\overline{J^{f}}\times\overline{J^g}\subset U_x$. Consequently there are also $C^1$-functions $\varphi_1$ and $\varphi_2$ with $0\leq \varphi_i\leq 1$, $\varphi_1\equiv 1$ on $\overline{J^f}$ and $\varphi_2\equiv 1$ on $\overline{J^g}$ such that $\psi:=\varphi_1\otimes\varphi_2\in \mathcal{V}^{f,g}\cap C^1(I_0^{f,g})$ is supported inside $U_x$. 

Now consider the set
\begin{multline}
E:=\left\lbrace x\in I_0^{f,g}: \text{ for any open neighborhood $U_x\subset I_0^{f,g}$ of $x$ there exists $\psi\in  \mathcal{V}^{f,g}\cap C^1(I_0^{f,g})$}\right.\notag\\
\left. \text{with $\supp \psi\subset U_x$ and $\left\|\nabla \psi\right\|_{\sup}\leq 1$ such that $N^{f,g}(\psi)\geq 1$}\right\rbrace.
\end{multline} 
as in \cite[Lemma 3.1]{And75} we can see that $E$ is locally finite. Next, if $K\subset I_0^{f,g}\setminus E$ is compact then there exists a constant $C_K>0$ such that for any $F,G\in \mathcal{V}^{f,g}\cap C^1(I_0^{f,g})$ with $\supp F, \supp G\subset K$ we have 
\begin{equation}\label{E:Nest}
|N^{f,g}(F,G)|\leq C_K\left\|\nabla F\right\|_{\sup}\left\|\nabla G\right\|_{\sup}.
\end{equation}
To see (\ref{E:Nest}) note that for any $x\in K$ there exists a neighborhood $U_x$ such that for any $\psi \in \mathcal{V}^{f,g}\cap C^1(I_0^{f,g})$ with $\supp\psi\subset U_x$ we have 
\[N^ {f,g}(\psi)\leq \left\|\nabla \psi\right\|_{\sup}^2.\]
Due to compactness we can find $x^1,\dots, x^n\in K$ with neighborhoods $U_{x^i}$ that provide a finite open cover of $K$. Moreover, by the construction of $\mathcal{V}^{f,g}$, we can furnish a corresponding partition of unity, i.e. find functions $\chi_1,\dots, \chi_n$ such that $\chi_k\in \mathcal{V}^{f,g}\cap C^1(I_0^{f,g})$, $0\leq \chi_i\leq 1$ and $\supp\chi_i\subset U_{x^i}$, $\sum_{i=1}^n \chi_i=1$ on $K$. This allows to proceed further along the arguments of \cite[p. 16]{And75} to obtain (\ref{E:Nest}), which correspondes to \cite[Lemma 3.2]{And75}. Likewise, it is not difficult to prove analogs of \cite[Lemmas 3.3 and 3.4]{And75}. To obtain an analog of \cite[Lemma 3.5]{And75}, consider
\[\varphi(r):=\sup_{|x|\leq r}\max\left\lbrace |\frac{\partial f}{\partial x_1}(x)|,|\frac{\partial f}{\partial x_2}(x)|\right\rbrace, \ \ r>0,\]
for a fixed function $F\in \mathcal{V}^{f,g}\cap C^1(I_0^{f,g})$ vanishing at the origin
and note that close to the origin it is dominated by
\[\int_0^{|x_1|}\varphi(r)dr+\int_0^{|x_2|}\varphi(r)dr.\]
Now Lemma \ref{L:aprioribound} follows as in the proof of \cite[Theorem 2.3]{And75}.
\end{proof}

Lemma \ref{L:aprioribound} allows a polynomial approximation argument.

\begin{corollary}\label{C:polyapprox}
For any $F\in C_c^1(I^{f,g}_0)$ there exist a sequence $(F_n)_n\subset \mathcal{V}^{f,g}\cap C_c^1(I^{f,g}_0)$ and a compact set $K\subset I^{f,g}_0$ with $\supp F\subset K$ and $\supp F_n\subset K$ for all $n$ such that $\lim_n F_n=F$ uniformly and $\lim_{n\to\infty}\left\|\nabla F_n-\nabla F\right\|_{\sup}=0$.
\end{corollary}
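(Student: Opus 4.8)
The plan is to upgrade the purely uniform (i.e. $C^0$) density of $\mathcal{V}^{f,g}$ in $C_0(I_0^{f,g})$ to a genuine $C^1$-approximation by localizing to coordinate rectangles and doing polynomial approximation there. Since $\supp F$ is a compact subset of the open set $I_0^{f,g}$, we have $\dist(\supp F,\{0\}\cup\partial I^{f,g})>0$, so I would first choose finitely many open coordinate boxes $R_i=J^f_i\times J^g_i$, $i=1,\dots,m$, whose closures $\overline{R_i}$ are contained in $I_0^{f,g}$ and which cover $\supp F$, and set $K:=\bigcup_{i=1}^m\overline{R_i}$. Then $K$ is a compact subset of $I_0^{f,g}$ with $\supp F\subset\operatorname{int}K$, and every box is bounded away from the origin, which is exactly what will keep the approximants inside $\mathcal{R}_0(\lip(I^f)\otimes\lip(I^g))$.

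Next I would localize using the construction already carried out in the proof of Lemma \ref{L:aprioribound}: it produces a $C^1$ partition of unity $\chi_1,\dots,\chi_m\in\mathcal{V}^{f,g}\cap C_c^1(I_0^{f,g})$ with $0\le\chi_i\le 1$, $\supp\chi_i\subset R_i$ and $\sum_i\chi_i\equiv 1$ on a neighbourhood of $\supp F$, so that $\sum_i g_i=F$ for $g_i:=\chi_i F\in C_c^1(R_i)$. By the same construction I also fix cutoffs $\theta_i\in\mathcal{V}^{f,g}\cap C_c^1(I_0^{f,g})$ with $\supp\theta_i\subset R_i$ and $\theta_i\equiv 1$ on a neighbourhood of $\supp g_i$, so that $\theta_i g_i=g_i$ as $C^1$ functions. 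On the box $\overline{R_i}$ I would then invoke a $C^1$-version of the Weierstrass theorem (e.g. tensor Bernstein polynomials, or mollification followed by simultaneous polynomial approximation of the function and its first derivatives): there are polynomials $P_{i,n}$ in $x_1,x_2$ with $P_{i,n}\to g_i$ and $\nabla P_{i,n}\to\nabla g_i$ uniformly on $\overline{R_i}$. The decisive algebraic observation is that such approximants lie in $\mathcal{V}^{f,g}$: the coordinate projections $\pi_1=\mathrm{id}\otimes\mathbf 1$ and $\pi_2=\mathbf 1\otimes\mathrm{id}$ vanish at the origin and generate, under the algebra operations, every polynomial with zero constant term, so that such polynomials belong to $\mathcal{R}_0\subset\mathcal{V}^{f,g}$; an arbitrary constant term of $P_{i,n}$ is then harmlessly absorbed since $\theta_i\in\mathcal{V}^{f,g}$, whence $F_{i,n}:=\theta_i P_{i,n}\in\mathcal{V}^{f,g}\cap C_c^1(I_0^{f,g})$ with $\supp F_{i,n}\subset R_i\subset K$. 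Using the product rule together with $\theta_i g_i=g_i$ gives $F_{i,n}\to\theta_i g_i=g_i$ in $C^1$, and setting $F_n:=\sum_{i=1}^m F_{i,n}$ yields $F_n\in\mathcal{V}^{f,g}\cap C_c^1(I_0^{f,g})$, $\supp F_n\subset K$, and $F_n\to\sum_i g_i=F$ both uniformly and with $\left\|\nabla F_n-\nabla F\right\|_{\sup}\to 0$, which is the assertion.

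The hard part will be the genuine gradient control rather than the bookkeeping: Stone--Weierstrass delivers only uniform approximation, so the nontrivial input is the simultaneous polynomial approximation of a $C^1$ function and its partial derivatives on a box, combined with the verification that these polynomial approximants can be realized inside the somewhat rigidly defined algebra $\mathcal{V}^{f,g}$ without destroying compact support. Both difficulties are resolved precisely by the two features built into the setup above, namely the localization into boxes bounded away from the origin (so that everything lives in $\mathcal{R}_0$) and the fixed cutoffs $\theta_i\in\mathcal{V}^{f,g}$ that reconcile the constant terms with the requirement $\supp F_n\subset K$.
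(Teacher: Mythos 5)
Your proposal is correct and follows essentially the same route as the paper: simultaneous polynomial ($C^1$-Bernstein) approximation of $F$ and $\nabla F$ on a box, multiplied by a cutoff $\theta\in\mathcal{V}^{f,g}\cap C_c^1(I^{f,g}_0)$ that both restores compact support in $K$ and absorbs the constant term so that the approximant lands in $\mathcal{V}^{f,g}$. The only difference is that the paper dispenses with your partition of unity by working with a single cutoff and Kingsley's two-variable Bernstein polynomials on one rectangle (the polynomial may be formed across the origin since $\theta$ kills everything near $0$ anyway), so your localization into boxes bounded away from the origin is harmless but not needed.
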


\begin{proof} Let $K$ be the closure $\overline{U}$ of a bounded open set $U\subset I_0^{f,g}$ containing $\supp F$ and let $0\leq \theta\leq 1$ be a function from $\mathcal{V}^{f,g}\cap C_c^1(I^{f,g}_0)$ that equals one on $\supp F$ and is supported inside $U$. Without loss of generality we may assume that $K=[0,1]^2$. Given $k,l\geq 1$ consider the Bernstein polynomials
\[F_{k,l}(x_1,x_2):=\sum_{p=0}^k\sum_{q=0}^l F(\frac{p}{k},\frac{q}{l})\binom{p}{k}\binom{q}{l}x_1^p(1-x_1)^{k-p}x_2^q(1-x_2)^{l-q}.\]
Given any $n$ we have $\sup_{x\in K}|F_{k,l}(x)-F(x)|<1/n$ and $\sup_{x\in K}|\nabla F_ {k,l}(x)-\nabla F(x)|<1/k$ for any $k$ and $l$ both greater than or equal to some number $N_n$, see \cite{Ki51}. Setting $F_n:=\theta F_{N_n,N_n}$ yields a sequence $(F_n)_n \subset \mathcal{V}^{f,g}\cap C_c^1(I^{f,g}_0)$ that converges uniformly to $F$ on $I_0^{f,g}$ and with gradients $\nabla F_n$ converging uniformly to $\nabla F$. 
\end{proof}

Corollary \ref{C:polyapprox} now allows to extend $E^{f,g}$ to a bilinear form on $C_c^1(I_0^{f,g})$. For $F\in C_c^1(I_0^{f,g})$ let $(F_n)_n$ be a sequence as in Corollary \ref{C:polyapprox} and set
\begin{equation}\label{E:extendEfg}
E^{f,g}(F):=\lim_n E^{f,g}(F_n).
\end{equation}
Corollary \ref{C:polyapprox} together with the sup-norm-closability of $(\mathcal{E},\mathcal{D})$ and formula (\ref{E:BD2}) imply the following.

\begin{corollary}\label{C:extendEfg}
Definition \ref{E:extendEfg} is correct and yields a sup-norm closable symmetric non-negative definite bilinear form on $C_c^1(I_0^{f,g})\cup \mathcal{V}^{f,g}$ on which normal contractions operate. Formulas (\ref{E:BD2}) and (\ref{E:aprioribound}) remain valid for  all $F,G\in C_c^1(I^{f,g}_0)$.
\end{corollary}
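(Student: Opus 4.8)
The plan is to establish existence and sequence-independence of the limit in \eqref{E:extendEfg}, and then to transport the structure of $E^{f,g}$ from $\mathcal{V}^{f,g}$ across that limit. The one genuinely new ingredient is the a priori bound \eqref{E:aprioribound} of Lemma \ref{L:aprioribound} combined with the uniform convergence of \emph{gradients} supplied by Corollary \ref{C:polyapprox}; every remaining assertion is a soft limiting argument built on the Beurling--Deny decomposition \eqref{E:BD2}, which is already available on $\mathcal{V}^{f,g}$.

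First I would fix $F\in C_c^1(I^{f,g}_0)$ and an approximating sequence $(F_n)_n\subset\mathcal{V}^{f,g}\cap C_c^1(I^{f,g}_0)$ as in Corollary \ref{C:polyapprox}, with common compact support $K\subset I^{f,g}_0$, $F_n\to F$ and $\nabla F_n\to\nabla F$ uniformly. Each difference $H_{n,m}:=F_n-F_m$ again lies in $\mathcal{V}^{f,g}\cap C_c^1(I^{f,g}_0)$ with support in $K$, so \eqref{E:BD2} applies and I would bound its three summands. The strongly local part is handled by \eqref{E:aprioribound}, giving $N^{f,g}(H_{n,m})\le C_K\|\nabla H_{n,m}\|_{\sup}^2\to 0$. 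The killing part is handled by uniform convergence, $\int H_{n,m}^2\,d\kappa^{f,g}\le\|H_{n,m}\|_{\sup}^2\,\kappa^{f,g}(K)\to 0$, where $\kappa^{f,g}(K)<\infty$ since $\kappa^{f,g}$ is Radon and $K$ is compact. Together with the jump estimate below this makes $(F_n)_n$ an $E^{f,g}$-Cauchy sequence, so the limit exists; independence of the chosen sequence then follows by interleaving two approximating sequences into a third.

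The main obstacle is the jump term $\int\!\!\int(H(x)-H(y))^2J^{f,g}(dxdy)$, $H=H_{n,m}$, where, unlike the monotone situation of Lemma \ref{L:imagemeas}, I only have uniform control of $H$ and of $\nabla H$. I would split the domain at a scale $\delta>0$. Near the diagonal, $|H(x)-H(y)|\le\|\nabla H\|_{\sup}|x-y|$ bounds the contribution by $\|\nabla H\|_{\sup}^2\int\!\!\int_{|x-y|<\delta}|x-y|^2J^{f,g}$; this integral is finite because $\int\!\!\int(\ell_i(x)-\ell_i(y))^2J^{f,g}\le E^{f,g}(\ell_i)<\infty$ for functions $\ell_i\in\mathcal{V}^{f,g}$ agreeing with the coordinate $x_i$ on a neighbourhood of $K$, with $K$ bounded away from the origin. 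Away from the diagonal the integrand is at most $4\|H\|_{\sup}^2$, and $\supp J^{f,g}\subset K_0\times K_0$ (Lemma \ref{L:supports}, $K_0$ the rectangle of \eqref{E:Keps}) reduces matters to showing $J^{f,g}(\{|x-y|\ge\delta\}\cap(K\times I^{f,g}_0))<\infty$; the only delicate part, where $y$ approaches the origin, is controlled by a cut-off $\theta\in\mathcal{V}^{f,g}$ with $\theta\equiv 1$ on $K$, since there $(\theta(x)-\theta(y))^2$ stays bounded below and $\int\!\!\int(\theta(x)-\theta(y))^2J^{f,g}\le E^{f,g}(\theta)<\infty$. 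Both pieces tend to zero as $\|H_{n,m}\|_{\sup},\|\nabla H_{n,m}\|_{\sup}\to 0$, and the very same estimates applied to $F_n\to F$ pass each summand of \eqref{E:BD2} to the limit, establishing \eqref{E:BD2} for every $F\in C_c^1(I^{f,g}_0)$.

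It remains to transfer the structural properties. Polarizing and letting $N^{f,g}(F_n,G_n)\to N^{f,g}(F,G)$ yields \eqref{E:aprioribound} on $C_c^1(I^{f,g}_0)$, while symmetry, bilinearity and non-negative definiteness pass to the limit and fix a single bilinear form on the linear span of $C_c^1(I^{f,g}_0)\cup\mathcal{V}^{f,g}$, the two prescriptions agreeing on the overlap via constant approximating sequences. That normal contractions operate I would check by noting $E^{f,g}(\Phi(F_n))\le E^{f,g}(F_n)$ on $\mathcal{V}^{f,g}$ and, for $C^1$ contractions $\Phi$, that $\Phi(F_n)$ approximates $\Phi(F)$ in the sense of Corollary \ref{C:polyapprox} since $\nabla(\Phi(F_n))=\Phi'(F_n)\nabla F_n$ converges uniformly; passing to the limit gives $E^{f,g}(\Phi(F))\le E^{f,g}(F)$, and general normal contractions follow by the mollification of Lemma \ref{L:mainstatement} together with sup-norm-closability. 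Finally, sup-norm-closability of the extension reduces to that of $E^{f,g}$ on $\mathcal{V}^{f,g}$ — which holds because $E^{f,g}(G)=\mathcal{E}(G(f,g))$ there and $(\mathcal{E},\mathcal{D})$ is sup-norm-closable — by approximating each member of an $E^{f,g}$-Cauchy, sup-norm-null sequence simultaneously in energy and in supremum norm by elements of $\mathcal{V}^{f,g}\cap C_c^1(I^{f,g}_0)$ and invoking the triangle inequality for $E^{f,g}(\cdot)^{1/2}$.
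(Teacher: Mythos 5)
Your proof is correct and uses exactly the ingredients the paper itself cites for this corollary (Corollary \ref{C:polyapprox}, the a priori bound (\ref{E:aprioribound}), the decomposition (\ref{E:BD2}) and the sup-norm-closability of $(\mathcal{E},\mathcal{D})$); the paper asserts the corollary in a single sentence and gives no further detail. Your $\delta$-splitting of the jump term (controlling the near-diagonal part by $\left\|\nabla H\right\|_{\sup}$ via coordinate test functions and the far part by cut-offs, Radon finiteness on compacta and Lemma \ref{L:supports}) and the interleaving argument for sequence-independence supply precisely the details the paper leaves to the reader, and they check out.
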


For this extension we have (\ref{E:measurerep}) and therefore may proceed as in the the proof of Theorem \ref{T:positive}.

\end{document}